\theoremstyle{plain}
\newtheorem{proposition}{Proposition}[section]
\newtheorem{theorem}[proposition]{Theorem}
\newtheorem{lemma}[proposition]{Lemma}
\newtheorem{corollary}[proposition]{Corollary}
\theoremstyle{definition}
\newtheorem{definition}[proposition]{Definition}
\theoremstyle{remark}
\newtheorem{remark}[proposition]{Remark}
\renewenvironment{proof}{\smallskip\noindent\emph{\textbf{Proof.}}\hspace{1pt}}%
{\hspace{-5pt}{\nobreak\quad\nobreak\hfill\nobreak$\square$\vspace{8pt}%
\par}\smallskip\goodbreak}
\renewcommand{\leq}{\leqslant}
\renewcommand{\geq}{\geqslant}
\newcommand{\C}[1]{\mathscr{C}^{#1}}
\newcommand{\modulo}[1]{{\left|#1\right|}}
\newcommand{\norma}[1]{{\left\|#1\right\|}}
\newcommand{\reali}{{\mathbb{R}}}
\newcommand{\naturali}{{\mathbb{N}}}
\renewcommand{\epsilon}{\varepsilon}
\renewcommand{\phi}{\varphi}
\renewcommand{\L}[1]{{\mathbf{L}^#1}}
\newcommand{\pt}{\partial}
\renewcommand{\d}[1]{\mathinner{\mathrm{d}{#1}}}
\newcommand{\g}{{\gamma_0}}
\newcommand{\e}{{\nu}}
\title{A priori estimates and analytical construction of radially symmetric solutions in the gas dynamics}
\author{Magali L\'ecureux-Mercier
}
\begin{document} 
\maketitle

\begin{abstract}
In this article we derive $\C1$-a priori estimates on the Riemann invariants of the Euler compressible equations in the case of cylindrical or spherical symmetry. These estimates allow to construct shock waves with a time of existence proportional to the distance to the origin at the initial time.

\noindent\textit{2000~Mathematics Subject Classification:} 35L60, 35Q31, 76N10.

\medskip

\noindent\textit{Keywords:} Euler compressible equations, shock wave solution, long time of existence.

\end{abstract}

\section{Introduction}
We are interested in the Euler compressible system in the isentropic case:
\begin{equation}\label{eq:isentropique}
\left\{
\begin{array}{l}
\pt_t \rho + \mathrm{div}(\rho {u})=0 \,,\\
\pt_t  u + (u\cdot\nabla )u + \frac{p'(\rho)}{\rho}  \nabla \rho=0\,.  
\end{array}
\right.
\end{equation}
In the case of cylindrical ($d=2$) or spherical ($d=3$) symmetry, the system (\ref{eq:isentropique}) can be written
\begin{equation}\label{eq:source}
\left\{
\begin{array}{l}
\pt_t \rho +\pt_r (\rho u )=\frac{-(d-1)\rho u}{r}\,,\\
\pt_t u+u\pt_r u+\frac{p'(\rho)}{\rho}\pt_r \rho=0\,.
\end{array}
\right.
\end{equation}
The case $d=1$ corresponds to the one-dimensional case.
Our goal here is to construct shock wave solutions in the isentropic spherical or cylindrical case with a reasonable lower bound on the time of existence. 

Very often only the case of a perfect polytropic gas satisfying the sate law $pv=\mathfrak{R}T$ is considered. However, in numerous phenomenon such as cavitation or sonoluminescence \cite{sonolum, Evans} or considering a dusty gas \cite{dustgret,JenaSharma,dustsaff,steiner,dustvish}, it seems more adapted to consider at least a Van der Waals gas satisfying $p(v-b)=\mathfrak{R}T$. We prove results in this more general framework.

\medskip

The Euler compressible equations have already been widely studied. Concerning regular solutions, general classical criteria on hyperbolic systems  (Leray~\cite{Leray}, G\r arding \cite{Garding}, Kato \cite{Kato}) provide us local in time existence of smooth solutions for the Cauchy problem. However,  the time of existence can be very small: several results by T. C. Sideris \cite{Sideris85, Sideris97}, T. Makino, S. Ukai \& S. Kawashima \cite{MakinoUK}, J.-Y. Chemin \cite{chemin} provide explosion's criteria. We also know that some regular solutions can be global in time:  for example stationary solutions, or under some expansivity hypothesis (see T. T. Li \cite{li}, D. Serre \cite{Sr97} or M. Grassin \cite{Grass}, M. Lécureux-Mercier \cite{mercier_solreg}).

Concerning piecewise regular solutions, a result by A. Majda \cite{Majda} states  that we can associate a piecewise regular solution to a given piecewise initial data satisfying some compatibility conditions. But the time of existence of these solutions can be once again very small.

\medskip

In this paper, we construct single propagating shock waves with a long time of existence. We consider only 2-shocks. The general framework is the one of a Bethe-Weyl  gas (for example Van der Waals gas or perfect gas,  the definition   of a Bethe-Weyl fluid is given in definition \ref{def:BW}.  ) in the particular case of spherical or cylindrical symmetry. More precisely  : we consider the cylindrical or spherical case for a Bethe-Weyl fluid. We assume that at initial time we have a piecewise regular solution with a discontinuity jump at radius $R_0$ satisfying the Lax compatibility conditions. We assume furthermore that, prolongating each regular piece of this initial condition into a regular global in space function, we can find a regular solution of system (\ref{eq:source}) with this initial condition admitting a long time of existence. Then we add some expansivity  hypotheses on the Rimann-invariant. The definition of the Riemann invariants is given in (\ref{def:w}). We obtain that the time of existence of the CS/SS solution is proportional to the radius $R_0$ of the initial discontinuity.
The statement of the main result is more precisely given in Theorem \ref{thm:main}.

The strategy of the proof is as follows: we use a method that Li Ta Tsien \cite{li} employed in order to construct 1D shock waves for an isentropic gas.
This method is inspired from the scalar one-dimensional case $\pt_t u+\pt_x(f(u))=0$, in which it is possible to obtain a shock wave solution just gluing two regular solutions along a line of discontinuity satisfying the Rankine-Hugoniot shock condition 
\[
 U=\frac{f(u^+)-f(u^-)}{u^+-u^-}\,,
\]
where $u^+$ is the limit of $u$ at the discontinuity from the right and $u^-$ is the limit of $u$ at the discontinuity from the left. This provides us an ODE that the line of discontinuity has to satisfy, being defined as $\frac{\d{x}}{\d{t}}=U$. Then, we have just to check that under suitable conditions the Lax entropy conditions meaning the characteristic curves are entering the shock (see Figure \ref{fig:1D2D}) are satisfied.

\begin{figure}[ht] 

\psset{xunit=1.0cm,yunit=1.0cm, algebraic=true,dotstyle=*,dotsize=3pt 0,linewidth=0.8pt,arrowsize=3pt 2,arrowinset=0.25}
\begin{pspicture*}(-0.74,-1.12)(6.98,5.04)
\psaxes[labelFontSize=\scriptstyle,xAxis=true,yAxis=true,labels=none,Dx=1,Dy=1,ticksize=-2pt 0,subticks=2]{->}(0,0)(-0.74,-1.12)(6.98,5.04)
\parametricplot[linewidth=1.6pt]{2.031109347472741}{3.0912795849190973}{1*5.77*cos(t)+0*5.77*sin(t)+8.08|0*5.77*cos(t)+1*5.77*sin(t)+-0.29}
\parametricplot[linewidth=1.2pt,linestyle=dashed,dash=1pt 1pt]{1.9723408695631803}{2.73272213626104}{1*5.2*cos(t)+0*5.2*sin(t)+5.19|0*5.2*cos(t)+1*5.2*sin(t)+-2.07}
\parametricplot[linewidth=1.2pt,linestyle=dashed,dash=1pt 1pt]{3.414371542275274}{4.424795547836878}{1*3.94*cos(t)+0*3.94*sin(t)+6.96|0*3.94*cos(t)+1*3.94*sin(t)+3.78}
\rput[tl](4.14,3.84){{\small shock}}
\rput[tl](4.06,1.6){{\small characteristic}}
\rput[tl](6.48,-0.1){$r$}
\rput[tl](-0.3,4.62){$t$}
\rput[tl](-0.32,-0.12){0}
\end{pspicture*}
\newrgbcolor{ccttqq}{0.8 0.2 0}
\newrgbcolor{qqzztt}{0 0.6 0.2}
\newrgbcolor{ffwwww}{1 0.4 0.4}
\psset{xunit=1.0cm,yunit=1.0cm,algebraic=true,dotstyle=*,dotsize=3pt 0,linewidth=0.8pt,arrowsize=3pt 2,arrowinset=0.25}
\begin{pspicture*}(-0.62,-1.08)(8.22,6.28)
\psaxes[labelFontSize=\scriptstyle,xAxis=true,yAxis=true,labels=none,Dx=1,Dy=1,ticksize=-2pt 0,subticks=2]{->}(0,0)(-0.74,-1.12)(6.98,5.04)
\pspolygon[linestyle=none,fillstyle=solid,fillcolor=ffwwww,opacity=0.25](2.23,0.71)(1.8,1.94)(1.54,3.12)(1.41,4.38)(1.4,4.7)(2,5)(2.96,5.32)(4.1,5.3)(5.46,5.14)(6.78,4.54)(6.44,4.46)(5.8,4.19)(5.32,3.92)(4.68,3.48)(4.33,3.17)(3.78,2.58)(3.39,2.03)(2.98,1.28)(2.71,0.6)(2.54,0)
\parametricplot[linewidth=1.6pt]{1.8712625613454594}{2.938048166068375}{1*6.03*cos(t)+0*6.03*sin(t)+8.46|0*6.03*cos(t)+1*6.03*sin(t)+-1.22}
\parametricplot[linewidth=1.2pt,linestyle=dashed,dash=2pt 2pt,linecolor=ccttqq]{3.2623028966640493}{3.579423475488481}{1*8.96*cos(t)+0*8.96*sin(t)+12.8|0*8.96*cos(t)+1*8.96*sin(t)+3.8}
\parametricplot[linewidth=1.2pt,linestyle=dashed,dash=1pt 2pt 3pt 2pt ,linecolor=qqzztt]{3.8716417996093804}{4.2482772752137565}{1*11.97*cos(t)+0*11.97*sin(t)+12.82|0*11.97*cos(t)+1*11.97*sin(t)+10.7}
\parametricplot[linewidth=1.2pt,linestyle=dashed,dash=2pt 2pt,linecolor=ccttqq]{1.8257925498382732}{2.6098944901921666}{1*5.9*cos(t)+0*5.9*sin(t)+5.39|0*5.9*cos(t)+1*5.9*sin(t)+-2.99}
\parametricplot[linewidth=1.2pt,linestyle=dashed,dash=1pt 2pt 3pt 2pt ,linecolor=qqzztt]{3.151262932604429}{3.6119394248259447}{1*10.69*cos(t)+0*10.69*sin(t)+12.09|0*10.69*cos(t)+1*10.69*sin(t)+4.84}
\parametricplot[linewidth=1.2pt,linestyle=dashed,dash=1pt 2pt 3pt 2pt ,linecolor=qqzztt]{3.0626184080623915}{3.580273898702151}{1*5.24*cos(t)+0*5.24*sin(t)+8.65|0*5.24*cos(t)+1*5.24*sin(t)+4.95}
\rput[tl](5.62,3.9){shock}
\rput[tl](5.48,1.72){{\small 1-characteristic}}
\rput[tl](3.66,0.64){{\small 2-characteristic}}
\rput[tl](0.38,3.8){{\small 1-characteristic}}
\rput[tl](0.12,0.88){{\small 2-characteristic}}
\rput[lt](2.46,6.12){\parbox{3 cm}{{\small domain of influence of the shock}}}
\rput[tl](6.48,-0.1){$r$}
\rput[tl](-0.32,5.54){$t$}
\rput[tl](-0.34,-0.16){0}
\end{pspicture*}

\caption{Shock curve and characteristics in the scalar case (left) and in the case of a system of two equations (right). the domain of uncertainty is in gray on the right picture.}\label{fig:1D2D}

\end{figure}
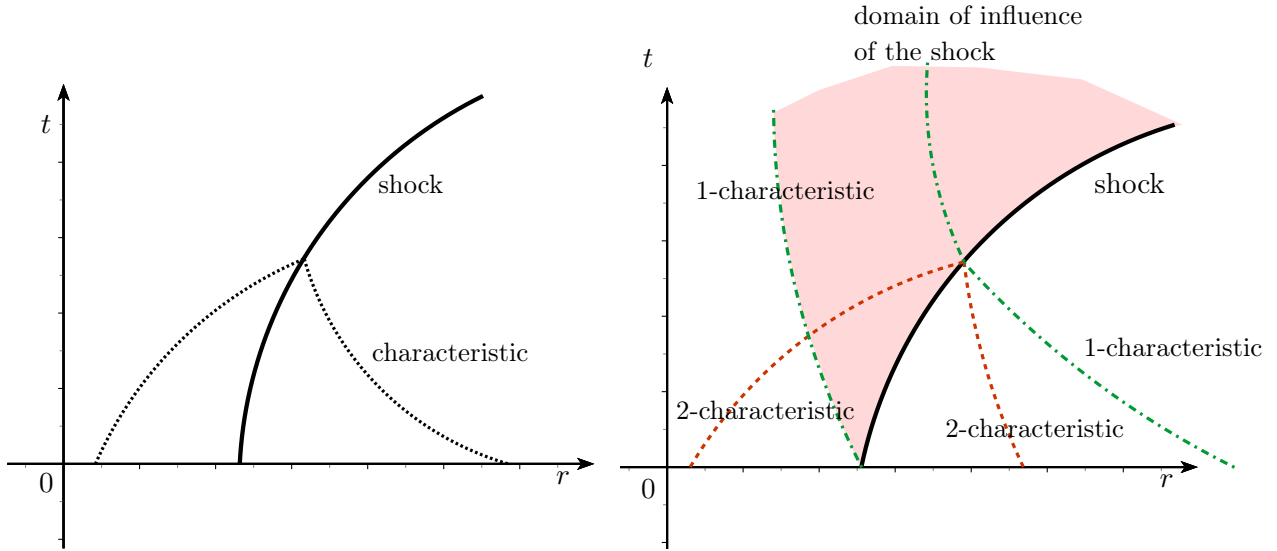

We want to apply this strategy to the  isentropic spherical Euler equation. However,   in the case of a system of two equations in one space dimension, the strategy above is no longer possible since now the Rankine-Hugoniot conditions provide us two equations. Consequently, we have not only an equation for the behavior of the shock, but also a compatibility condition along the shock. Graphically, we see that, for a 2-shock, if the 2-characteristics are still entering the shock, the 1-characteristics are entering from the right and exiting from the left (see Figure \ref{fig:1D2D}), which brings us an uncertainty zone between  the shock and the 1-characteristic exiting from the foot initial position of the shock to the left. In order to construct a shock wave solution from two regular solutions, we have to study the existence of a smooth solution in this angular domain between the shock and the 1-characteristic. Li Ta Tsien \cite{li} already studied this subject in the one-dimensional case. In particular, he obtained local in time existence of a smooth solution for the angular problem.

More precisely,  we first have to estimate the time of existence of a smooth solutions in an angular domain whose boundaries are chosen in a way to be a 1-characteristic on the left and a shock on the right (see Figure \ref{fig:1D2D}) ; the time of existence is then obtained by deriving $\C1$ estimates on the solution of this boundary problem.  Finally, thanks to a priori estimates on the Riemann invariant, we obtain a lower bound on the time of existence of these solutions, proportional to the position at initial time of the discontinuity.


\medskip

This paper is structured as followed: in Section \ref{sec:morethermo}, we describe the thermodynamical quantities and their properties. In Section \ref{sec:C0}, we compute a priori estimates in $\C0$ along the characteristics. In Section \ref{sec:C1}, we compute a priori estimates in $\C1$ along the characteristics. In Section \ref{sec:constr}, we introduce the shock conditions, the angular problem and we prove the main results of this article: in Proposition \ref{prop:constr} we give an estimate of the time of existence of a smooth solution in an angular domain and in Theorem \ref{thm:main} we finally construct a shock wave.
In Sections \ref{sec:ode} and \ref{sec:vdw}, we finally give some details about a useful lemma on ODEs and about explicit computations in the cases of a perfect gas and of a Van der Waals gas.

\section{Thermodynamics}\label{sec:morethermo}

\subsection{Fundamental relationships}

\begin{definition}
We consider a fluid, whose internal energy is a regular  function of its specific volume\footnote{specific is a synonym of  massic} $v=1/\rho$ and of its specific entropy $s$.  We say that the gas is entitled with a \emph{complete} state law, or energy law $e=e(v,s)$. 
\end{definition}

For a gas entitled with a complete state law, the fundamental thermodynamic principle is then
\begin{equation}\label{eq:fond}
\mathrm{d}e=-p\mathrm{d}v +T\mathrm{d}s\,
\end{equation}
where $p$ is the pressure and $T$ the temperature of the gas. Consequently, the pressure $p$ and the temperature $T$ can be defined as
\begin{align}
p&=-\left.\frac{\pt e}{\pt v}\right|_s\,,&
T&=\left.\frac{\pt e}{\pt s}\right|_v\,, \label{eq:pT}
\end{align}
where the notation $|$ precises the variable maintained constant in the partial derivation.

The higher order derivatives of $e$ have also an important role; we introduce the following adimensional quantities:
\begin{align}
\gamma&=-\dfrac{v}{p}\left.\dfrac{\pt p}{\pt v}\right|_s \,, &
\Gamma&=-\frac{v}{T}\left.\frac{\pt T}{\pt v}\right|_s\,, &
\delta&=\frac{pv}{T^2}\left.\frac{\pt T}{\pt s}\right|_v\,, &
\mathscr{G}&=-\frac{v}{2}\frac{\left.\frac{\pt^3 e}{\pt v^3}\right|_s}{ \left. \frac{\pt ^2 e}{\pt v^2} \right|_s}\,.
\label{eq:scrG2}
\end{align}
The  coefficient $\gamma$ is called the \emph{adiabatic exponent},  and  $\Gamma$ is the  \emph{Gr\"uneisen coefficient}.
The quantities $\gamma,\delta, \Gamma$ and $\mathscr{G}$ characterise the geometrical properties of the isentropic curves  in the $(v,p)$ plane (see \cite{MenikoffPlohr}). They can be expressed in function of $e$ through the relationships:
\begin{align*}
\gamma&= \frac{v}{p}\frac{\pt^2 e}{\pt v^2}\,,
&\Gamma&= -\frac{v}{T}\frac{\pt^2 e}{\pt s\pt v}\,, &
 \delta&=\frac{pv}{T^2}\frac{\pt^2 e}{\pt s^2}\,.
\end{align*}
 
We also introduce the \emph{calorific capacity at constant volume} $c_v$   and the \emph{calorific capacity at constant pressure} $c_p$ by
\begin{align}
c_v&=\left.\dfrac{\partial e}{\pt T}\right|_v =\frac{T}{\left.\frac{\pt^2 e}{\pt s^2}\right|_v}\,, &
c_p&=T\left.\frac{\pt s}{\pt T}\right|_p\,. \label{eq:cp}
\end{align}
These two quantities are linked with  $\frac{p v}{T}$ and with $\gamma$, $\delta$, $\Gamma$ through
\begin{equation}\label{eq:deltacv}
 \delta c_v=\frac{pv}{T}\,,\qquad\qquad c_p=\frac{pv}{T}\frac{\gamma}{\gamma \delta -\Gamma^2}\,.
\end{equation}

The quantity $\gamma_*=\frac{c_p}{c_v}$ can besides be expressed as  $\gamma_*=\frac{\gamma \delta}{\gamma\delta -\Gamma^2}$.  It is not equal to   $\gamma$  in the general case, but for an ideal gas we have $\delta=\Gamma=\gamma-1$, and consequently $\gamma_*=\gamma$.

\subsection{Thermodynamical constraints.}\label{sec:constraint}
It is very natural to assume that the massic volume $v$ is positive.  We assume furthermore  that the pressure $p$ and the temperature  $T$ are positive, which imposes that $e$ is a function increasing in $T$ and decreasing in $v$.

A classical thermodynamical hypothesis requires furthermore $e$ to be a convex function of $s$ and $v$, which means:
\begin{align*}
 \gamma\delta-\Gamma^2&\geq 0\,, &\delta&\geq 0 \,,& \gamma&\geq 0\,.
\end{align*}
Furthermore, we require usually $\Gamma>0$ and $\mathscr{G}>0$.
 The  condition $\Gamma>0$  is not thermodynamically required but is satisfied for many gases and ensures that the isentropes do not cross each other in the $(v,p)$ plan. 
The condition $\mathscr{G}>0$ means that the isentropes are strictly convex in the $(v,p)$ plan.

\begin{definition}\label{def:BW}
We call \emph{Bethe-Weyl fluid} any fluid endowed with a complete state law $e$ bounded  below such that
\begin{itemize}
\item  the pressure and the temperature defined by (\ref{eq:pT}) are positive,
\item  the coefficients $\gamma, \delta, \Gamma$ and $\mathscr{G}$ defined by  (\ref{eq:scrG2}) satisfy :
\begin{equation}\label{eq:BW}
\gamma>0\,,\qquad\gamma\delta\geq \Gamma^2\,,\qquad \Gamma>0\,,\qquad \mathscr{G}>0\,,
\end{equation}
\item there exists a maximal density  $\rho_{max}\in ]0,+\infty]$ such that  $\displaystyle \lim_{\rho\to\rho_{max}}p(\rho,s )=+\infty$.
\end{itemize}
\end{definition}

The condition $\gamma\geq 0$ means that $p$ increases with the density $\rho= 1/v$, which allows us to define the \emph{adiabatic sound speed} by 
\begin{equation}\label{eq:c2}
c= \sqrt{\left.\frac{\pt p}{\pt \rho}\right|_s} =\sqrt{\gamma\frac{p}{\rho}}\,.
\end{equation}
Then, we can check that $\mathscr{G}$   can be expressed in function of $\rho$ and $c$ through the expression 
\[
\displaystyle \mathscr{G}=\frac{1}{c}\left. \frac{\pt(\rho c)}{\pt \rho}\right|_s\,.
\]

\subsection{Van der Waals Gas}
\begin{definition}
A gas is said to follow the \emph{Van der Waals} law, if there exists a constant $\mathfrak{R}$ such that it satisfies the following pressure law:
\begin{equation}\label{eq:VdW2}
p\left(v- b \right)=\mathfrak{R}T\,,
\end{equation}
where $v$ is the massic volume and $b$ is the \emph{covolume}, representing the compressibility limit of the fluid, due to the volume of the molecules. The constant $\mathfrak{R}= 8.314 \,\mathrm{J. K^{-1}. mol^{-1}}$ is called the perfect gas constant.

In the case $b=0$, we obtain the \emph{perfect gas} law.
\end{definition}
The state law (\ref{eq:VdW2})  is a particular case  of the state law $p=\frac{\mathfrak{R}T}{V-b}-\frac{a}{V^2}$. This last law is not considered here as it authorises the change of phase when $T$ goes under a threshold $T_c=\frac{8a}{27b\mathfrak{R}}$ (see \cite{RolWi}). 

\medskip

In order to obtain expressions for the quantities $e$, $p$, $\gamma$, $\Gamma$... in function of $v$ and $s$, we use the fundamental relationship 
(\ref{eq:fond}).
 This equation gives us  the PDE: $\pt_v e+\frac{\mathfrak{R}}{v-b}\pt_s e=0$. 
Thus, we introduce new variables  $w=(v-b)^{-\mathfrak{R}}$, $\sigma=(v-b)^{-\mathfrak{R}}\exp(s)$ and $\hat e(w,\sigma)=e(v,s)$.  We obtain $\pt_w\hat e=0$, so that  $e=\mathcal{E}((v-b)^{-\mathfrak{R}}\exp(s))$ for any regular function  $\mathcal{E}$. 

If we assume furthermore that $c_v$ is  constant, thanks to the definition of $c_v$ and (\ref{eq:fond}), we get that $\left.\frac{\pt^2 e}{\pt s^2}\right|_v=\frac{1}{c_v}\left.\frac{\pt e}{\pt s}\right|_v$, hence $\sigma \mathcal{E}''=(\frac{1}{c_v}-1)\mathcal{E}'$ and  $\mathcal{E}(\sigma)=C\sigma^{1/c_v}$ which leads to:
\begin{align*}
e&=(v-b)^{-\frac{\mathfrak{R}}{c_v}}\exp(\frac{s}{c_v})\,,& p&= \frac{\mathfrak{R}}{c_v}\frac{e}{v-b}\,.
\end{align*}
After some computations we finally  obtain
\begin{align}
\gamma &=\gamma_0\frac{v}{v-b},& \Gamma=\delta&=(\gamma_0-1)\frac{v}{v-b},&\mathscr{G}&=\frac{\gamma_0+1}{2}\frac{v}{v-b}\,, \nonumber
\end{align}
where
\begin{equation}\label{eq:g}
\g :=\frac{\mathfrak{R}}{c_v}+1\,.
\end{equation}
The conditions of Section \ref{sec:constraint}  are then satisfied for $\gamma_0>1$.

In the following, we consider a general Bethe-Weyl fluid satisfying $1<\mathscr{G}<2$. 
Another general assumption is that the application $(\rho \mapsto \frac{c(\rho)}{\rho})$ is integrable in 0. 

In particular, we can consider a Van der Waals fluid with constant  and strictly positive calorific capacity $c_v$:
\begin{equation}\label{eq:gpos}
c_v>0\,,
\end{equation}
which implies $\g >1$  and $\mathscr{G}>1$.

We can also consider the standard $p$-system : $p=\rho^\gamma$ with $\gamma>1$.

\section{A priori $\C0$ estimates along the characteristics}\label{sec:C0}
Let us remind that  in this paper we consider cylindrical or spherical Euler equation in the isentropical case.

First we want to obtain $\C0$ estimates on regular solutions of (\ref{eq:source}), $\rho$ and $u$. To do that, we use the Riemann invariants of the system and we make computations along the characteristics. 

\subsection{Change of variables}
\begin{lemma} We assume that $(\rho \mapsto \frac{c(\rho)}{\rho})$ is integrable in 0.
Let us introduce the Riemann invariants
\begin{align}
w_1&=u-H(\rho)\,,& w_2&=u+H(\rho)\,,\label{def:w}
\end{align}
where $ \displaystyle H(\rho)$ is a primitive of $\rho\mapsto  \frac{c(\rho)}{\rho}$ vanishing in 0. Then, the system (\ref{eq:source}) reduces to 
\begin{equation}\label{eq:red_source}
\left\{
\begin{array}{l}
\pt_t w_1 +\lambda_1(w) \pt_r w_1=f(r,w),\\
\pt_t w_2+\lambda_2 (w) \pt_r w_2=-f(r,w),
\end{array}
\right.
\end{equation}
where $f(r,w)=\frac{(d-1)uc}{r}$, $\lambda_1=u-c(\rho)$, $ \lambda_2=u+c(\rho)$. 
\end{lemma}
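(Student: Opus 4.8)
The plan is to diagonalise the $2\times2$ quasilinear system (\ref{eq:source}) by forming the two linear combinations of its equations that are dictated by $w_1$ and $w_2$. The only structural input needed is the isentropic identity $p'(\rho)=c^2$: since in the isentropic regime $p$ depends on $\rho$ alone, the definition (\ref{eq:c2}) of the sound speed gives $c^2=\left.\frac{\pt p}{\pt\rho}\right|_s=p'(\rho)$, so the pressure term of the momentum equation reads $\frac{p'(\rho)}{\rho}\pt_r\rho=\frac{c^2}{\rho}\pt_r\rho$. I would also record that $H$ is well defined (as a primitive of $\rho\mapsto c(\rho)/\rho$ vanishing at $0$) precisely because of the integrability hypothesis, and that $H'(\rho)=c(\rho)/\rho$, whence $\pt_t H(\rho)=\frac{c}{\rho}\pt_t\rho$ and $\pt_r H(\rho)=\frac{c}{\rho}\pt_r\rho$ by the chain rule.

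First I would rewrite the mass equation in non-conservative form, expanding $\pt_r(\rho u)=u\pt_r\rho+\rho\pt_r u$, so that (\ref{eq:source}) becomes
\begin{align*}
\pt_t\rho+u\pt_r\rho+\rho\pt_r u&=-\frac{(d-1)\rho u}{r}\,, &
\pt_t u+u\pt_r u+\frac{c^2}{\rho}\pt_r\rho&=0\,.
\end{align*}
Next I would solve these two relations for $\pt_t\rho$ and $\pt_t u$ and substitute into $\pt_t w_2=\pt_t u+\frac{c}{\rho}\pt_t\rho$. Collecting the coefficient of $\pt_r u$ gives $-(u+c)$, and collecting that of $\pt_r\rho$ gives $-\frac{c}{\rho}(u+c)$; the common factor $(u+c)$ then pulls out to produce exactly $-(u+c)\bigl(\pt_r u+\frac{c}{\rho}\pt_r\rho\bigr)=-(u+c)\pt_r w_2$, while the source contributes $-\frac{(d-1)cu}{r}$. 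This is precisely $\pt_t w_2+\lambda_2\pt_r w_2=-f$ with $\lambda_2=u+c$.

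The computation for $w_1=u-H(\rho)$ is the mirror image: substituting into $\pt_t w_1=\pt_t u-\frac{c}{\rho}\pt_t\rho$, the coefficient of $\pt_r u$ becomes $-(u-c)$ and that of $\pt_r\rho$ becomes $+\frac{c}{\rho}(u-c)$, so the common factor $(u-c)$ factors out as $-(u-c)\bigl(\pt_r u-\frac{c}{\rho}\pt_r\rho\bigr)=-(u-c)\pt_r w_1$, and the source now appears with a plus sign $+\frac{(d-1)cu}{r}$. This yields $\pt_t w_1+\lambda_1\pt_r w_1=f$ with $\lambda_1=u-c$. Finally I would note that, since $w_1+w_2=2u$ and $w_2-w_1=2H(\rho)$, both $u$ and $\rho$ (hence $c$) are recovered from $w=(w_1,w_2)$, so that $f(r,w)=\frac{(d-1)uc}{r}$ is indeed a function of $r$ and $w$ alone, consistent with the statement.

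There is no genuine obstacle here; the result is a direct, if bookkeeping-heavy, linear recombination. The only points requiring care are the identification $p'(\rho)=c^2$, the correct sign when eliminating $\pt_t\rho$ through the mass equation, and the verification that the two source contributions emerge with opposite signs — which is exactly the mechanism that makes $w_1$ and $w_2$ Riemann invariants of the homogeneous system.
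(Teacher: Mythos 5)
Your proof is correct and is exactly the ``direct computation'' the paper invokes: diagonalising (\ref{eq:source}) via the chain rule $\pt H(\rho)=\frac{c}{\rho}\pt\rho$, the identification $p'(\rho)=c^2$, and elimination of the time derivatives, with the source term emerging with opposite signs in the two equations. The paper's proof consists solely of the phrase ``Direct computation,'' so you have simply supplied the details it omits, including the correct observations that the integrability hypothesis makes $H$ well defined and that $(u,\rho)$ are recoverable from $(w_1,w_2)$ so that $f$ is indeed a function of $(r,w)$.
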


\begin{proof}
Direct computation.
\end{proof}

We prove below some properties of the new unknown $H, w_1, w_2$, which will be useful in the proof of the main theorem.
\begin{lemma}\label{lem:cH} Let us consider a Bethe-Weyl gas.
We  assume that $1< \mathscr{G}< 2$ and that $(\rho \mapsto \frac{c(\rho)}{\rho})$ is integrable in 0. Then we have $H\geq c$ and in particular $u-H\leq u-c$, that is to say $w_1\leq \lambda_1$.
  
In particular,   $w_1=u-H\geq 0$ implies $\lambda_1=u-c\geq 0$. 
\end{lemma}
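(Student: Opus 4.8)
The plan is to treat $H$ and $c$ as functions of the density $\rho$ at fixed entropy and to compare them through the auxiliary function $\phi(\rho)=H(\rho)-c(\rho)$, showing it is nonnegative by a monotonicity argument anchored at the origin.

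First I would translate the hypothesis on $\mathscr{G}$ into a differential relation for $c$. Starting from the identity $\mathscr{G}=\frac1c\left.\frac{\pt(\rho c)}{\pt\rho}\right|_s$ recalled in Section~\ref{sec:constraint} and expanding the derivative, I get $\rho\, c'(\rho)=(\mathscr{G}-1)\,c(\rho)$ along the isentrope. Combining this with $H'(\rho)=c(\rho)/\rho$, which is the very definition of $H$, gives
\[
\phi'(\rho)=\frac{c(\rho)}{\rho}-\frac{(\mathscr{G}-1)c(\rho)}{\rho}=(2-\mathscr{G})\,\frac{c(\rho)}{\rho}\,.
\]
Since $\mathscr{G}<2$ and $c,\rho>0$, the right-hand side is strictly positive, so $\phi$ is strictly increasing on $]0,\rho_{max}[$.

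The crux is then to control $\phi$ near $\rho=0$, and I expect this to be the main obstacle. I would argue that $c(0^+)=0$ as follows: the condition $\mathscr{G}>1$ forces $c'(\rho)=(\mathscr{G}-1)c/\rho>0$, so $c$ is increasing and its limit $c(0^+)=\inf c\geq 0$ exists. If this limit were a strictly positive constant $c_0$, then monotonicity would give $c(\rho)\geq c_0$ for all $\rho>0$, whence $c(\rho)/\rho\geq c_0/\rho$, which is not integrable at $0$ — contradicting the standing assumption that $\rho\mapsto c(\rho)/\rho$ is integrable in $0$. Hence $c(0^+)=0$. Since $H(0)=0$ by the normalization of the primitive, this yields $\phi(0^+)=0$.

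Being increasing with $\phi(0^+)=0$, the function $\phi$ is nonnegative throughout, i.e. $H\geq c$. The announced consequences are then immediate: $H\geq c$ gives $-H\leq -c$, hence $u-H\leq u-c$, that is $w_1\leq\lambda_1$; and in turn $w_1=u-H\geq 0$ forces $\lambda_1=u-c\geq w_1\geq 0$. The only delicate point is the behaviour at the origin, where both the integrability hypothesis and the sign $\mathscr{G}>1$ are genuinely used; everything else is the elementary sign computation for $\phi'$ together with the monotonicity conclusion.
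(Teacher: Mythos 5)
Your proof is correct and takes essentially the same route as the paper: both arguments compute $c'(\rho)=(\mathscr{G}-1)H'(\rho)$, deduce $H'(\rho)-c'(\rho)=(2-\mathscr{G})\frac{c(\rho)}{\rho}\geq 0$ from $1<\mathscr{G}<2$, and integrate from $\rho=0$. Your explicit verification that $c(0^+)=0$ via the integrability of $\rho\mapsto c(\rho)/\rho$ is precisely the point the paper leaves implicit in its phrase ``integrating on $[0,\rho]$'', so this is a careful filling-in of the same argument rather than a different one.
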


\begin{remark}
In the case of a perfect polytropic gas, we have $\mathscr{G}=\frac{\g+1}{2}$ and the condition $1<\mathscr{G}< 2$ is equivalent to $1< \g<3$, which is a natural hypothesis on $\g$.
\end{remark}

\begin{proof}
Note first that $c'(\rho)=H'(\rho)(\mathscr{G}-1)\geq 0$. Then $H'(\rho)-c'(\rho)=H'(\rho)(2-\mathscr{G})=\frac{c(\rho)}{\rho}(2-\mathscr{G})\geq 0$. Hence, integrating on $[0, \rho]$, we obtain $H(\rho)\geq c(\rho)$.
\end{proof}

\begin{lemma}\label{lem:cH2}
Let $\rho^+> 0$. Let us define, for $\rho\geq \rho^+$,
\begin{equation}\label{eq:F}
F(\rho, \rho^+):=\left(p-p^+\right)\left(\frac{1}{\rho^+}-\frac{1}{\rho}\right)\,.
\end{equation}
Then, for all $\rho\geq \rho^+$, we have $H(\rho)-H(\rho^+) \leq \sqrt{F(\rho, \rho^+)}$.
\end{lemma}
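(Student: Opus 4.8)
The plan is to rewrite each of the three quantities $H(\rho)-H(\rho^+)$, $p-p^+$ and $1/\rho^+-1/\rho$ as an integral over $[\rho^+,\rho]$ and then to invoke the Cauchy--Schwarz inequality; this immediately yields the squared estimate, from which the claim follows by taking square roots.

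First I would record the integral representations. By definition $H$ is a primitive of $\rho\mapsto c(\rho)/\rho$, so
\[
H(\rho)-H(\rho^+)=\int_{\rho^+}^\rho \frac{c(\xi)}{\xi}\,\d{\xi}\,,
\]
which is nonnegative for $\rho\geq\rho^+$ since $c>0$. In the isentropic regime the pressure is a function of $\rho$ alone and, by the very definition of the sound speed in (\ref{eq:c2}), one has $p'(\rho)=c(\rho)^2$; hence
\[
p-p^+=\int_{\rho^+}^\rho c(\xi)^2\,\d{\xi}\,,\qquad \frac{1}{\rho^+}-\frac{1}{\rho}=\int_{\rho^+}^\rho \frac{\d{\xi}}{\xi^2}\,.
\]
In particular $\gamma>0$ guarantees $c^2>0$, so both factors are nonnegative and $F(\rho,\rho^+)\geq 0$, making the right-hand side of the asserted inequality well defined.

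The crucial step is then to apply the Cauchy--Schwarz inequality to the pair of functions $\xi\mapsto c(\xi)$ and $\xi\mapsto 1/\xi$ on $[\rho^+,\rho]$:
\[
\left(\int_{\rho^+}^\rho \frac{c(\xi)}{\xi}\,\d{\xi}\right)^2 \leq \left(\int_{\rho^+}^\rho c(\xi)^2\,\d{\xi}\right)\left(\int_{\rho^+}^\rho \frac{\d{\xi}}{\xi^2}\right)\,.
\]
By the three identities above, the left-hand side equals $\bigl(H(\rho)-H(\rho^+)\bigr)^2$ and the right-hand side equals exactly $F(\rho,\rho^+)$. Since both $H(\rho)-H(\rho^+)$ and $F(\rho,\rho^+)$ are nonnegative, taking square roots gives $H(\rho)-H(\rho^+)\leq\sqrt{F(\rho,\rho^+)}$, as desired.

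I do not expect any serious obstacle here: the whole content lies in recognising the Cauchy--Schwarz structure hidden in the product $F=(p-p^+)(1/\rho^+-1/\rho)$, the factorisation $c/\xi=c\cdot(1/\xi)$ matching precisely the two factors $c^2$ and $1/\xi^2$. Note that the integrability of $c(\rho)/\rho$ at the origin, assumed elsewhere, plays no role since we integrate over $[\rho^+,\rho]$ with $\rho^+>0$. An alternative would be to set $\Phi(\rho)=F(\rho,\rho^+)-\bigl(H(\rho)-H(\rho^+)\bigr)^2$, check $\Phi(\rho^+)=0$ and try to prove $\Phi'\geq 0$, but this route leads to a messier sign analysis and the Cauchy--Schwarz argument is cleaner.
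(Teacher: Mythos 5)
Your proof is correct, but it takes a genuinely different route from the paper's. The paper proceeds by monotonicity: it differentiates $\sqrt{F(\rho,\rho^+)}-H(\rho)$ with respect to $\rho$, recognises the derivative as the manifestly nonnegative expression
\[
\frac{c}{2\rho\sqrt{F(\rho,\rho^+)}}\left(\sqrt{c\rho\left(\tfrac{1}{\rho^+}-\tfrac{1}{\rho}\right)}-\sqrt{\tfrac{1}{c\rho}\left(p-p^+\right)}\right)^2\geq 0\,,
\]
and integrates from $\rho^+$, where $F(\rho^+,\rho^+)=0$. This is essentially the alternative you dismissed at the end of your proposal as a ``messier sign analysis'' --- except that in the paper the sign analysis collapses into exhibiting a perfect square, so it is not messy at all. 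The two arguments are in fact two faces of the same inequality: the paper's perfect square is exactly the infinitesimal form of your Cauchy--Schwarz inequality, both resting on the factorisation $c/\xi=c\cdot(1/\xi)$ against the factors $c^2$ and $1/\xi^2$. What your version buys is brevity and the avoidance of a minor technical point the paper glosses over: $\partial_\rho\sqrt{F}$ is singular at $\rho=\rho^+$ (where $F$ vanishes), so the paper's integration strictly speaking runs on $(\rho^+,\rho]$ together with continuity at the endpoint; Cauchy--Schwarz needs no such care. What the paper's version buys is that it never needs the representation $p-p^+=\int_{\rho^+}^{\rho}c^2\,\d{\xi}$ as a separate step --- though your use of it is perfectly legitimate, since in the isentropic setting of the paper $p$ is a function of $\rho$ alone with $p'(\rho)=c(\rho)^2$ by (\ref{eq:c2}), a fact the paper's own computation also uses when differentiating $F$.
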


\begin{proof}
Let $\rho\geq \rho^+$. Let us derivate $\sqrt{F(\rho, \rho^+)}-H(\rho)$ with respect to $\rho$. We obtain
\begin{align*}
\frac{\d{}}{\d{\rho}}(\sqrt{F(\rho, \rho^+)}-H(\rho))&=\frac{1}{2\sqrt{F(\rho, \rho^+)}}\left(c^2 (\frac{1}{\rho^+}-\frac{1}{\rho}) +\frac{1}{\rho^2}(p-p^+) \right)-\frac{c}{\rho}\\
&=\frac{c}{2\rho \sqrt{F(\rho, \rho^+)}}\left(c\rho (\frac{1}{\rho^+}-\frac{1}{\rho}) +\frac{1}{c\rho}(p-p^+) -2\sqrt{F(\rho, \rho^+)}\right)\\
&=\frac{c}{2\rho\sqrt{F(\rho, \rho^+)}}\left(\sqrt{c\rho (\frac{1}{\rho^+}-\frac{1}{\rho})} -\sqrt{\frac{1}{c\rho}(p-p^+)} \right)^2\\
&\geq 0\,.
\end{align*}
Noting that $F(\rho^+, \rho^+)=0$ and integrating on $[\rho^+, \rho]$, we obtain $\sqrt{F(\rho, \rho^+)}-H(\rho)\geq -H(\rho^+)$, which is the expected result.
\end{proof}

\begin{lemma}\label{lem:w1posK}
Let $u^+, \rho^-, \rho^+\in \reali$. Let us assume $\rho^-\geq \rho^+>0$ and  define $u^-:=u^++\sqrt{F(\rho^-, \rho^+)}$, with $F$ defined as in (\ref{eq:F}). Then  $u^--H(\rho^-)\geq u^+-H(\rho^+)$, that is to say $w_1^-\geq w_1^+$. 

In particular, $w_1^+=u^+-H(\rho^+)\geq 0$ implies $w_1^-=u^--H(\rho^-)\geq 0$. 
\end{lemma}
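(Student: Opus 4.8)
The plan is to reduce the statement directly to Lemma \ref{lem:cH2}, which has already done all the analytic work. First I would substitute the definition $u^- = u^+ + \sqrt{F(\rho^-, \rho^+)}$ into the Riemann invariant $w_1^- = u^- - H(\rho^-)$ and form the difference with $w_1^+ = u^+ - H(\rho^+)$. Collecting terms, the $u^+$ contributions cancel and one obtains
\begin{equation*}
w_1^- - w_1^+ = \sqrt{F(\rho^-, \rho^+)} - \bigl(H(\rho^-) - H(\rho^+)\bigr)\,.
\end{equation*}
Thus the whole claim is equivalent to the nonnegativity of this single expression.

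Next I would invoke Lemma \ref{lem:cH2}, applied at the point $\rho = \rho^-$. This is legitimate precisely because of the hypothesis $\rho^- \geq \rho^+ > 0$, which is exactly the range on which that lemma guarantees the bound $H(\rho) - H(\rho^+) \leq \sqrt{F(\rho, \rho^+)}$. Specializing to $\rho = \rho^-$ gives $H(\rho^-) - H(\rho^+) \leq \sqrt{F(\rho^-, \rho^+)}$, which is exactly the inequality needed to conclude $w_1^- - w_1^+ \geq 0$, i.e. $w_1^- \geq w_1^+$.

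For the ``in particular'' clause, I would simply chain inequalities: if $w_1^+ = u^+ - H(\rho^+) \geq 0$, then the first part gives $w_1^- \geq w_1^+ \geq 0$, so $w_1^- \geq 0$ as well. Here there is essentially no obstacle to overcome; the only thing to verify is that the somewhat ad hoc definition of $u^-$ as $u^+ + \sqrt{F(\rho^-,\rho^+)}$ is engineered precisely so that the jump in $u$ matches the quantity controlled in Lemma \ref{lem:cH2}. The content of this lemma is therefore bookkeeping: it repackages the earlier estimate in the language of the Riemann invariant $w_1$ across a state $(\rho^+, u^+) \to (\rho^-, u^-)$, which is the form that will be convenient later when imposing the Rankine--Hugoniot relation across the shock (this choice of $u^-$ anticipating the shock jump relation from Lemma \ref{lem:cH2} with $F$ as in \eqref{eq:F}). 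The genuinely delicate step, the sign analysis via the perfect-square decomposition, already lives inside the proof of Lemma \ref{lem:cH2} and need not be revisited.
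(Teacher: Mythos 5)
Your proof is correct and follows essentially the same route as the paper: both reduce the claim to the inequality $H(\rho^-)-H(\rho^+)\leq \sqrt{F(\rho^-,\rho^+)}$ from Lemma \ref{lem:cH2} applied at $\rho=\rho^-$, the only cosmetic difference being that you phrase it as nonnegativity of the difference $w_1^--w_1^+$ while the paper writes the same cancellation as a chain of inequalities.
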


\begin{proof}
This is a direct consequence of Lemma \ref{lem:cH2}. Indeed, the inequality $\sqrt{F(\rho^-, \rho^+)}\geq H^- -H^+$ implies
\begin{align*}
u^--H(\rho^-)=u^++\sqrt{F(\rho^-, \rho^+)}-H(\rho^-)\geq u^++H(\rho^-) - H(\rho^+) -H(\rho^-) =u^+-H(\rho^+)\,.
\end{align*}
\end{proof}

\subsection{$\C0$ estimate on $w_1$ and $w_2$}
Relying on computations along the characteristics, we now obtain estimates in $\L\infty$ for $w_1$ and $w_2$, the Riemann invariants associated to a regular solution of (\ref{eq:source}).

\begin{lemma} \label{lem:C0} 
Let us consider a Bethe-Weyl gas, satisfying $1< \mathscr{G}< 2$ and such that $(\rho \mapsto \frac{c(\rho)}{\rho})$ is integrable in 0. Let $w=(w_1, w_2)$ be a regular solution of (\ref{eq:red_source}) with a time of existence $\mathcal{T}>0$. 
Let $X_1$ and $X_2$ be the characteristics defined by
\begin{align}
\frac{\d{X_1}}{\d{t}}&=\lambda_1(w(t,X_1(t)))\,,&\frac{\d{X_2}}{\d{t}}&=\lambda_2(w(t,X_1(t)))\,,\label{def:car}\\
X_1(0)&= r_1\,,&X_2(0)&= r_2\,. \nonumber
\end{align}

Let us assume that for all $r>0$, $(w_2-w_1)(0, r)\geq 0$ and  $w_1(0,r)> 0$.
Then, we get: $X_1'> 0$, $X_2'> 0$ and, for all $t\in [0, \mathcal{T}[$ we have the estimates:
\begin{eqnarray}
&w_1(0, r_1)\leq w_1(t,X_1(t))\leq w_1(0, r_1)+\norma{w_2(0,\cdot)}_{\L\infty}^2\int_0^t \frac{d-1}{4X_1(\tau)}\d{\tau}\,, \label{eq:estw1} \\
&\frac{w_2(0,r_2)}{1+w_2(0,r_2)\int_0^t \frac{(d-1) }{4 X_2(\tau)}\d{\tau}} \leq  w_2(t,X_2(t))\leq w_2(0,r_2)\,.\label{eq:estw2}
\end{eqnarray}

Furthermore, for all $t\in [0, \mathcal{T}[$ :
\begin{align}
&w_1(0, X_1(0;t,r))\leq w_1(t,r)\leq w_2(t,r)\leq w_2(0,X_2(0;t,r))\,,
\end{align}
where, for $i\in \{1,2\}$, $X_i(0;t,r)$ designates the position at time 0 of the characteristic that satifies the initial condition $X_i(t)=r$.

\end{lemma}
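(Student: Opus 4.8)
My plan is to reduce everything to ordinary differential (in)equalities for $w_1$ and $w_2$ \emph{along their own characteristics} and then propagate signs. First I would record that, by the definition of the characteristics and of the reduced system (\ref{eq:red_source}),
\begin{align*}
\frac{\d{}}{\d{t}}\bigl[w_1(t,X_1(t))\bigr] &= f(X_1(t),w(t,X_1(t)))\,, &
\frac{\d{}}{\d{t}}\bigl[w_2(t,X_2(t))\bigr] &= -f(X_2(t),w(t,X_2(t)))\,,
\end{align*}
so that everything is governed by $f=\frac{(d-1)uc}{r}$. Since $c\geq 0$, $r>0$ and $d\geq 1$, the sign of $f$ is that of $u=\tfrac{1}{2}(w_1+w_2)$; and since $H$ is a primitive of the nonnegative map $c/\rho$ vanishing at $0$, one has $w_2-w_1=2H(\rho)\geq 0$ pointwise. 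This already gives the middle inequality $w_1\leq w_2$ of the last display, as well as $u\geq w_1$.

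The key step, and the main obstacle because of an apparent circularity, is to propagate the positivity $w_1\geq 0$. I would fix a $1$-characteristic $X_1$ with $X_1(0)=r_1$, set $g(t)=w_1(t,X_1(t))$, and run a continuity argument: $g(0)=w_1(0,r_1)>0$, and on any interval where $g\geq 0$ one has $u\geq w_1=g\geq 0$ \emph{along} $X_1$, hence $f\geq 0$ there, hence $g'=f\geq 0$ and $g$ is nondecreasing. Taking $t_0$ to be the first time $g$ could vanish forces $g(t_0)\geq g(0)>0$, a contradiction; therefore $g(t)\geq w_1(0,r_1)>0$ on $[0,\mathcal{T}[$. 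As this holds for every $1$-characteristic, $w_1\geq 0$ on the whole domain, whence $u\geq 0$ and $f\geq 0$ everywhere. With $f\geq 0$ in hand, $w_2$ is nonincreasing along each $X_2$, which gives $w_2(t,X_2(t))\leq w_2(0,r_2)$ and in particular the global bound $w_2\leq \norma{w_2(0,\cdot)}_{\L\infty}$. The positivity of the speeds is then immediate: since $u\geq w_1>0$ we get $\lambda_2=u+c>0$, and using $H\geq c$ (Lemma \ref{lem:cH}) we get $\lambda_1=u-c\geq u-H=w_1>0$; hence $X_1'>0$, $X_2'>0$ and both characteristics remain in $\{r>0\}$, so the integrals $\int_0^t \d{\tau}/X_i(\tau)$ make sense.

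The two quantitative bounds then come from estimating $uc$. Using $u\geq 0$, $0\leq c\leq H=\tfrac{1}{2}(w_2-w_1)$ and $w_1\geq 0$ yields $uc\leq \tfrac{1}{4}(w_2^2-w_1^2)\leq \tfrac{1}{4}w_2^2$. For the upper bound (\ref{eq:estw1}) I would integrate $g'=f\leq \frac{(d-1)}{4X_1(\tau)}\norma{w_2(0,\cdot)}_{\L\infty}^2$ along $X_1$, invoking the global bound on $w_2$ just proved; together with the lower bound $g\geq g(0)$ this is exactly (\ref{eq:estw1}). For the lower bound (\ref{eq:estw2}) I set $\psi(t)=w_2(t,X_2(t))\geq 0$ and use $\psi'=-f\geq -\frac{(d-1)}{4X_2(\tau)}\psi^2$, a Riccati-type inequality; dividing by $\psi^2$ gives $\frac{\d{}}{\d{t}}(1/\psi)\leq \frac{(d-1)}{4X_2(\tau)}$, and integrating from $0$, where $\psi(0)=w_2(0,r_2)>0$ since $w_2\geq w_1>0$ at $t=0$, produces the stated lower bound after inverting. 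Finally, the concluding chain of inequalities is the monotonicity read backwards: $w_1(t,r)\geq w_1(0,X_1(0;t,r))$ because $w_1$ is nondecreasing along $X_1$, $w_2(t,r)\leq w_2(0,X_2(0;t,r))$ because $w_2$ is nonincreasing along $X_2$, and $w_1(t,r)\leq w_2(t,r)$ because $H(\rho)\geq 0$.
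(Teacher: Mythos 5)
Your proof is correct and, apart from one step, it is essentially the paper's own argument: the first-vanishing-time contradiction to propagate $w_1>0$ along the 1-characteristics, the monotone decrease of $w_2$ along the 2-characteristics, the comparison $uc\leq uH=\tfrac14(w_2^2-w_1^2)$ via Lemma \ref{lem:cH}, the Riccati inequality $\psi'\geq -\tfrac{d-1}{4X_2}\psi^2$ inverted to obtain (\ref{eq:estw2}), and the use of $H\geq c$ to get $\lambda_1\geq w_1>0$ are exactly the paper's steps. The genuine divergence is how you obtain $w_2\geq w_1$ (equivalently, non-negativity of the density), which is the load-bearing fact behind $u\geq w_1$ and hence behind every subsequent sign argument. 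The paper proves it dynamically: it introduces the particle paths $\chi'=u(t,\chi)$ and uses the mass equation of (\ref{eq:source}) to show that $\chi^{d-1}\rho(t,\chi)\exp\left(\int_0^t \pt_r u(s,\chi(s))\,\d{s}\right)$ is constant in time, so non-negativity of $\rho$ at $t=0$ propagates to all later times. You instead declare $w_2-w_1=2H(\rho)\geq 0$ ``pointwise'' because $H\geq 0$ on its domain. That shortcut is legitimate only under the reading that a regular solution of (\ref{eq:red_source}) is, by definition, the Riemann-invariant image of a solution whose density stays in the domain $[0,\rho_{max})$ of $H$ --- equivalently, that the coefficients $\lambda_i(w)$ and $f(r,w)$ are undefined unless $(w_2-w_1)/2$ lies in the range of $H$, so the inequality is built into the solution concept; under that reading the paper's hypothesis $(w_2-w_1)(0,\cdot)\geq 0$ is automatic. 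If, however, the point is to prove that the underlying density cannot leave $[0,\rho_{max})$ --- which is what the paper's dedicated ``positivity of the density'' step establishes, and it is the only place in the proof where the original system (\ref{eq:source}) rather than the diagonal system is actually used --- then your assertion presupposes the conclusion, and you would need the conservation argument above (or an equivalent invariant-region argument) to close it. Everything downstream of that point in your write-up is sound, and you are in fact slightly more careful than the paper in extracting the strict inequalities $X_1'>0$, $X_2'>0$.
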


\begin{proof}
First, let $t_0>0$. We introduce $\chi$ be the solution of the ODE:
\[
\frac{\d{\chi}}{\d{t}}=u(t,\chi)\,, \qquad\chi(t_0)=r_0\,.
\]
Note that the solution of this ODE exists at least in finite time. Some trouble could appear only if $\chi$ meets the line of equation $r=0$. In the same way, the characteristics $X_1$ and $X_2$ satisfying (\ref{def:car}) are defined at least in finite time. Let $r_0, r_1, r_2>0$ be fixed. Let us denote $T\in]0, \mathcal{T}[$ a time such that  $\chi$, $X_1$, $X_2$ are defined on $[0,T]$. 

\noindent\textbf{Positivity of the density:} 
Let us prove first that $\rho$  remains non-negative  because of the first equation of (\ref{eq:source}). By the definition of $\chi$, for any $t\in [0,T]$.
\[
\frac{\d{}}{\d{t}}\left( \chi^{d-1} \rho(t,\chi) \exp(\int_{t_0}^t \pt_r u(s, \chi(s)) \d{s})\right)=0\,.
\]
Consequently, thet density being non-negative at time $t_0=0$,  the densitity remains non-negative  along the lines $(t,\chi(t))$; that is to say: the inequality $w_2\geq w_1$  is satisfied for all time $t\in [0,T]$ since it is true at time $t=0$. 

\medskip


\noindent\textbf{$w_1$ is increasing along the characteristic:} 
At least on a small time interval containing 0, since $w_1(0,r_1) >0$ we have $w_1(t, X_1(t))\geq 0$. We want to prove that $w_1$ remains non-negative along the characteristic. 
Assume there is a time at which $w_1(t,X_1 (t))=0$. Let us denote $t_0\leq T$ the first time at which $w_1(t_0, X_1(t_0))=0$. Thanks to the previous result on the positivity of the density, we obtain $w_2(t, X_1(t))\geq w_1(t, X_1(t))\geq 0$ on $[0,t_0]$.  Then on $[0,t_0]$ we have $u=\frac{w_2+w_1}{2}\geq 0$ and $f(r,w)=\frac{(d-1)uc}{r}\geq 0$ which implies
\[
\frac{\d{w_1}(t,X_1(t))}{\d{t}}\geq 0\,.
\]
Integrating, we get $w_1(t_0, X_1(t_0))\geq w_1(0,r_1)>0$, which is in contradiction with the hypothesis. Finally, $w_1$ is  strictly positive for all $t\in [0,T]$ and increasing along the first characteristic. In particular, we also have $u>0$ for all $t\in [0,T]$.

\medskip
\noindent\textbf{Upper bound on $w_2$:} 
Along the second characteristic we get, since $u\geq w_1\geq 0$,
\[
\frac{\d{w_2(t,X_2(t))}}{\d{t}}=-f(X_2,w(t,X_2(t)))\leq 0\,.
\]
Integrating we obtain $w_2(t,X_2(t))\leq w_2(0,r_2)$, which provides us with an upper bound.

\medskip
\noindent\textbf{Lower bound on $w_2$:} 
Thanks to Lemma \ref{lem:cH} we know  that $c\leq H$. Hence, we obtain
\[
\frac{\d{w_2(t,X_2(t))}}{\d{t}}=\frac{-(d-1)}{X_2(t)}uc\geq \frac{-(d-1)}{X_2(t)} uH=\frac{-(d-1)}{4X_2(t)}(w_2^2-w_1^2)\geq  \frac{-(d-1)}{4X_2(t)}w_2^2\,.
\]
Consequently, we have $-\frac{1}{w_2^2(t,  X_2(t))}\frac{\d{w_2(t,X_2(t))}}{\d{t}}\leq \frac{d-1}{4X_2(t)}$ and integrating we finally obtain 
\[
\frac{1}{w_2(t, X_2(t))}\leq \frac{1}{w_2(0, r_2)} +\int_0^t \frac{d-1}{4X_2(\tau )}\d{\tau}\,.
\]
Since $w_2\geq 0$, we can invert this relation, and obtain the desired lower bound on $w_2$.

\medskip
\noindent\textbf{Upper bound on $w_1$:}
Similarly for $w_1$ we get
\[
\frac{\d{w_1(t,X_1(t))}}{\d{t}}=\frac{(d-1)}{X_1(t)}uc\leq \frac{d-1}{X_1(t)} uH=\frac{(d-1)}{4X_1(t)}(w_2^2-w_1^2)\leq  \frac{(d-1)}{4X_1(t)}\norma{w_2}_{\L\infty([0,T])}^2\,.
\]

Hence, as announced,
\[
w_1(t, X_1(t))\leq w_1(0, r_1)+\int_0^t \frac{(d-1)}{4X_1(\tau)}\norma{w_2}_{\L\infty([0,T])}^2\d{\tau }\,.
\]

\medskip
\noindent\textbf{Time of existence: }
Note now that $\frac{\d{\chi}}{\d{t}}= u(t, \chi(t))\geq 0 $ implies that this curve never meets the origin and is defined on $\reali^+$. Similarly, $\frac{\d{X_2}}{\d{t}}=(u+c)(t,X_2(t))\geq 0$ implies the 2-characteristics are going away from the origin and are defined for all $t\in \reali^+$.

Concerning the first characteristic, Lemma \ref{lem:cH} gives us that $\frac{\d{X_1}}{\d{t}}=(u-c)(t,X_1(t))\geq (u-H)(t, X_1(t))=w_1(t,X_1(t))\geq 0$. Consequently, the 1-characteristics are going away from the origin and are defined for all $t\in [0, \mathcal{T}[$.
\end{proof}

Let us modify slightly the hypotheses in order to obtain  a better result: we take off the hypothesis $w_{1,0}>0$. We obtain a similar result,  but now the time of validity is finite:
\begin{proposition}\label{prop:CO2}
Let us consider a Bethe-Weyl gas, satisfying $1< \mathscr{G}< 2$ and such that $(\rho \mapsto \frac{c(\rho)}{\rho})$ is integrable in 0. Let $T>0$, $R>0$ and let $w=(w_1, w_2)$ be a regular solution of (\ref{eq:red_source}) with a time of existence $\mathcal{T}>0$. We denote $w_1{1,0}=w_1(0,\cdot)$ and $w_{2,0}=w_2(0,\cdot)$, the initial condition of this regular solution. 
Let $T,R$ be two stricty positive real numbers and $X_1$ and $X_2$ be the characteristics defined by (\ref{def:car}), crossing in $(T,R)$. 

We assume that for all $r>0$, $(w_{2,0}-w_{1,0})( r)\geq 0$ and  $\min(w_{1,0}+w_{2,0})> 0$. Then there exists a time $T_0>0$ such that :
\begin{itemize}
\item for all $t\in [0, T_0]$,
\begin{align}
&w_{1,0} (r_1)\leq w_1(t,R)\leq w_2(t,R)\leq w_{2,0}(r_2)\,.
\end{align}
\item  for all $t\in [0,T_0]$,
\begin{eqnarray}
&w_{1,0}(r_1)\leq w_1(t, X_1(t) )\leq w_{1,0} (r_1)+\norma{w_{2,0}}_{\L\infty}^2 \int_0^t \frac{d-1}{4X_1(\tau)}\d{\tau}\,, \label{eq:estw1b} \\
&\frac{w_{2,0}(r_2)}{1+\frac{w_{2,0}(r_2)}{r_2} \frac{(d-1) t}{4 }} \leq  w_2(t,X_2(t))\leq w_{2,0}(r_2)\,.\label{eq:estw2b}
\end{eqnarray}
\end{itemize}

Furthermore, the time of existence $T_0$ is defined by $T_0=\min(\mathcal{T},+\infty)$ if $ \min(w_{1,0})\geq 0$ and, in the case $\min(w_{1,0})<0$, by
\begin{align*}
T_0&=\min\left(\mathcal{T},\frac{-4r_2}{(d-1)\min_{r_1>0}(w_{1,0}(r_1)) \min_{r_2>0}(w_{2,0}(r_2))}( \min_{r_1>0}(w_{1,0}(r_1)) + \min_{r_2>0}(w_{2,0}(r_2) ) )\right)\,,
\end{align*} 
\end{proposition}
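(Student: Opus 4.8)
The plan is to reduce everything to the single sign condition $u\geq 0$ (equivalently $w_1+w_2\geq0$), which is the only feature of the initial data that the proof of Lemma \ref{lem:C0} genuinely exploits, through the sign of the source $f(r,w)=(d-1)uc/r$. Since $\min(w_{1,0}+w_{2,0})>0$ gives $u_0>0$ everywhere, I would first set up a continuity (bootstrap) argument: let $T_0^\ast\in(0,\mathcal T]$ be the largest time such that $u(\tau,\cdot)\geq0$ for all $\tau\in[0,T_0^\ast)$ on the relevant domain. By regularity of the solution and $u_0>0$ we have $T_0^\ast>0$. On $[0,T_0^\ast)$ one has $f\geq0$, hence $w_1$ is non-decreasing along $X_1$ and $w_2$ is non-increasing along $X_2$, while $X_2'=u+c>0$ gives $X_2(\tau)\geq r_2$. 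Then all the computations in the proof of Lemma \ref{lem:C0} carry over verbatim: the only use of $w_{1,0}>0$ there was to make the positivity of $w_1$ self-propagating, and this is now replaced by the bootstrap. This already yields the box estimate of the first bullet (from $w_1$ increasing along $1$-characteristics, $w_2$ decreasing along $2$-characteristics, and $w_1\leq w_2$ by positivity of the density) and the explicit bounds \Ref{eq:estw1b}--\Ref{eq:estw2b}, the latter using $X_2(\tau)\geq r_2$ to replace $\int_0^t\frac{d-1}{4X_2}\,\d{\tau}$ by $\frac{(d-1)t}{4r_2}$.

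It remains to bound $T_0^\ast$ from below by the claimed $T_0$, i.e. to show $u$ cannot vanish earlier. For this I would combine two lower bounds valid on $[0,T_0^\ast)$. First, since $w_1$ increases along each $1$-characteristic, $w_1(t,r)\geq m_1:=\min_{r_1>0}w_{1,0}(r_1)$. Second, from \Ref{eq:estw2b} and the monotonicity of $x\mapsto x/(1+\alpha x)$ (increasing for $\alpha>0$), one gets $w_2(t,r)\geq m_2/(1+\tfrac{m_2}{r_2}\tfrac{(d-1)t}{4})$, where $m_2:=\min_{r_2>0}w_{2,0}(r_2)>0$ (positive since $w_{2,0}=u_0+H\geq u_0>0$). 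Hence $2u=w_1+w_2\geq m_1+m_2/(1+\tfrac{m_2}{r_2}\tfrac{(d-1)t}{4})$. If $m_1\geq0$ this is positive for every $t$, so $u$ never vanishes and $T_0=\min(\mathcal T,+\infty)$. If $m_1<0$, solving $m_1+m_2/(1+\tfrac{m_2}{r_2}\tfrac{(d-1)t}{4})\geq0$ for $t$ produces exactly the stated threshold; since on $[0,T_0)$ the right-hand side stays strictly positive while $u=0$ somewhere at $T_0^\ast$ would force it to vanish, the bootstrap closes and $T_0^\ast\geq T_0$.

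The \emph{main obstacle}, and the genuinely new feature relative to Lemma \ref{lem:C0}, is precisely that $w_1$ may now be negative. This destroys the self-propagating positivity of $w_1$ (handled by the bootstrap above) and, more seriously, allows $X_1'=u-c\geq w_1$ to become negative, so the $1$-characteristics are no longer guaranteed to move away from the origin. One must therefore verify that the characteristics stay in $\{r>0\}$ on $[0,T_0]$ so that both the source $f=(d-1)uc/r$ and the integrals $\int_0^t\frac{d-1}{4X_i}\,\d{\tau}$ remain well defined; this is where the finiteness of $T_0$ really enters, and the quantitative control of $\min_r u(t,r)$ obtained above is what limits the inward drift of $X_1$ and keeps it off $r=0$ on the stated interval. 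A secondary care point is making the $w_2$ lower bound uniform in the foot of the $2$-characteristic: the monotonicity of $x\mapsto x/(1+\alpha x)$ is what permits replacing $w_{2,0}$ at the foot by its global minimum $m_2$, which is what yields the clean closed form for $T_0$.
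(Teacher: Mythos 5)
Your proposal is correct and follows essentially the same route as the paper's proof: the paper likewise reduces everything to preserving the sign of $u$ (via semi-continuity of $t\mapsto\inf_{r}u(t,r)$ rather than your explicitly named bootstrap time $T_0^\ast$), reuses the monotonicity of $w_1$ along 1-characteristics and the quantitative lower bound on $w_2$ along 2-characteristics from Lemma \ref{lem:C0}, and obtains the stated threshold by solving $w_1(0,r_1)+\frac{w_2(0,r_2)}{1+w_2(0,r_2)\frac{(d-1)t}{4r_2}}\geq 0$ for $t$. Your extra care about the 1-characteristics possibly drifting toward $r=0$ when $w_1<0$ (controlled through $\lambda_1\geq w_1\geq\min w_{1,0}$ rather than through $u$ alone) is a genuine subtlety that the paper passes over silently, but it does not change the argument's structure.
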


\begin{proof}
As in the preceding proposition, we know that if the density if non-negative at the initial time, then it is non-negative for any positive time. Hence, $w_2(t,r)\geq w_1(t,r)$ for all $(t,r)\in[0, \mathcal{T}[\times \reali_+$.

 By  hypothesis,  $\inf_{r>0} u(0,r)\geq  \frac{1}{2}(\inf(w_{1,0}+w_{2,0}))>0$. Consequently,  by semi-continuity of $(t\mapsto \inf_{r>0} u(t,r))$, there exists $t_0\in ]0,\mathcal{T}[$ such that  on a small time interval $[0,t_0]$ we have $u\geq 0$. Hence   for all $t\in [0,t_0]$, $\left.\frac{\d{w_1(t,X_1(t))}}{\d{t}}\right.>0$ and $\left.\frac{\d{w_2(t,X_2(t))}}{\d{t}}\right.< 0$. Hence for $t\in [0,t_0]$, $w(t,X_1(t))\geq w_1(0,r_1)$ and $w_2(t, X_2(t))\leq w_2(0,r_2)$.

Assume furthermore that  $u(0,r)> 0$ on $[0,t_0]$, then: $\frac{\d{w_2(t, X_2(t))}}{\d{t}}\geq \frac{-(d-1)}{4 X_2(t)} w_2^2$ and $\frac{\d{w_1(t,X_1(t))}}{\d{t}} \leq  \frac{(d-1)}{4 X_1(t)}w_2^2$. Hence, the same estimates as before hold true:
\[
\frac{1}{w_2(t, X_2(t))}\leq \frac{1}{w_2(0,r_2)} +\int_0^t\frac{(d-1)}{4 X_2(\tau)}\d{\tau}
\]
Since, for $t\in [0,T_0]$, $\lambda_2=u+c\geq 0$, $X_2$ is increasing  and as $w_2\geq u$ implies $w_2\geq 0$, we can invert the relation, obtaining
\[
w_2(t,X_2(t))\geq \frac{w_2(0,r_2)}{1+ w_2(0,r_2) \frac{(d-1) t}{4 r_2}}\,.
\]

Let us use these estimate to find a lower bound for $u$. If $t\leq T_0$  then
\begin{align*}
u(t,R)=\frac{1}{2}(w_1+w_2)(t,R)\geq \frac{1}{2}\left(w_1(0,r_1) +\frac{w_{2}(0,r_2 )}{1+ w_2(0,r_2) \frac{(d-1) t}{4 r_2}} \right)\,.
\end{align*}
 If $w_{1}(0,r_1)\geq 0$, then $u$ is non-negative for all time and we recover the result of Lemma \ref{lem:C0}. If $w_1(0,r_1)<0$ and  $w_{1}(0,r_1)+w_2(0,r_2)>0$, then $u$ is non-negative if 
 \[
 t\leq \frac{-4 r_2}{(d-1)w_{1}(0,r_1) w_2(0,r_2)}(w_1(0,r_1)+w_2(0,r_2))\,,
 \] 
 which provides us a lower bound for $T_0$.
\end{proof}

\begin{remark}
In Lemma \ref{lem:C0} and Proposition \ref{prop:CO2}, we are integrating along the characteristics  on the time interval $[0,t]$. Note that we could obtain similar result integrating on any time interval $[\beta, t]$ with $\beta\in [0,t]$.
\end{remark}

\section{A priori $\C1$ estimates along the characteristics}\label{sec:C1}
We want now to obtain estimates in $\L\infty$ on $\pt_r w_1$ and $\pt_r w_2 $ where $w=(w_1,w_2)$ is a regular solution of (\ref{eq:red_source}). We  apply the same strategy as in the previous section. That is to say, we want to have a diagonal form for  the system  of equation of variables $\pt_r w_1$ and $\pt_r w_2$, obtained by derivating with respect to $r$ the system (\ref{eq:red_source}), and then make computations along the characteristics. As the obtained system in $\pt_r w_1$ and $\pt_r w_2$ is not diagonal, we have to introduce new variables $v_1$ and $v_2$ as described below. 

\subsection{Change of variable}
Note that in the following section, $e$ stands for $\exp$ and not for the internal energy. We introduce below fuctions depending on the radius $r$ and on the riemann invariant $w_1$ and $w_2$ ;  we denote $\pt_1$ for $\frac{\pt}{\pt w_1}$, $\pt_2$ for $\frac{\pt}{\pt w_2}$.

\begin{lemma}\label{lem:v1v2}
Let us define
\begin{align}
v_1&=e^{h}(\pt_r w_1 +\Phi)\,,& v_2&= e^k(\pt_r w_2 +\Psi)\,,\label{def:v}
\end{align}
where $h$, $k$, $\Phi$ and $\Psi$ are such that:
\begin{align}
\pt_2 h &=\frac{\pt_2 \lambda_1}{\lambda_1-\lambda_2}\,,&\pt_1 k&=\frac{\pt_1\lambda_2}{ \lambda_2-\lambda_1}\,, \label{eq:h}\\
\pt_2 (e^h \Phi)& =\frac{-e^h \pt_2 f}{\lambda_1-\lambda_2}\,,& \pt_1(e^k \Psi)&= \frac{e^k \pt_1 f}{\lambda_2-\lambda_1}\,.\label{eq:Phi}
\end{align}
Then $v_1$ and $v_2$ satisfy the equations
\begin{equation}\label{eq:v1v2}
\left\{
\begin{array}{lcl}
\pt_t v_1 +\lambda_1(w) \pt_r v_1 &=& a_0(r,w) v_1^2 +a_1(r,w) v_1+a_2(r,w)\,,\\
\pt_t v_2 +\lambda_2 (w) \pt_r v_2 &=& b_0(r,w) v_2^2 +b_1(r,w) v_2+b_2(r,w)\,.\\
\end{array}
\right.
\end{equation}
where
\begin{eqnarray*}
a_0(r,w)&=& - e^{-h} \pt_1\lambda_1\,,\\
a_1(r,w)&=& \pt_1 f+2\Phi \pt_1\lambda_1 +(\pt_1 h-\pt_2 h)f\,,\\
a_2(r,w)&=& e^{h}\left( \pt_r f-\Phi \pt_1 f -\Phi^2 \pt_1\lambda_1 +(\pt_1\Phi-\pt_2\Phi) f +\lambda_1 \pt_r \Phi\right) \,,\\
b_0(r,w)&=&- e^{-k} \pt_2\lambda_2\,,\\
b_1(r,w)&=&-\pt_2 f +2 \pt_2\lambda_2 \Psi +(\pt_1 k-\pt_2 k)f\,,\\
b_2(r,w)&=&e^{k}\left( -\pt_r f +\pt_2 f \Psi -\pt_2\lambda_2 \Psi^2 +\lambda_2\pt_r\Psi +(\pt_1\Psi-\pt_2\Psi) f\right)\,.
\end{eqnarray*}

\end{lemma}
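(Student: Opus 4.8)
The plan is to treat this as a guided direct computation: differentiate the reduced system \Ref{eq:red_source} in $r$, see why the resulting system in $\pt_r w_1,\pt_r w_2$ fails to be in Riccati (diagonal) form, and then verify that the integrating-factor variables $v_1,v_2$ of \Ref{def:v}, together with the defining relations \Ref{eq:h}--\Ref{eq:Phi}, are tuned exactly to remove the off-diagonal terms. First I set $p_1=\pt_r w_1$, $p_2=\pt_r w_2$ and differentiate each line of \Ref{eq:red_source} with respect to $r$. Using $\pt_r\lambda_i=\pt_1\lambda_i\,p_1+\pt_2\lambda_i\,p_2$ and the total derivative $\frac{\d{}}{\d{r}}f=\pt_r f+\pt_1 f\,p_1+\pt_2 f\,p_2$ (where $\pt_r f$ now denotes the \emph{explicit} partial), this gives, along $D_1:=\pt_t+\lambda_1\pt_r$ and $D_2:=\pt_t+\lambda_2\pt_r$,
\begin{align*}
D_1 p_1 &= -\pt_1\lambda_1\,p_1^2-\pt_2\lambda_1\,p_1 p_2+\pt_1 f\,p_1+\pt_2 f\,p_2+\pt_r f\,,\\
D_2 p_2 &= -\pt_1\lambda_2\,p_1 p_2-\pt_2\lambda_2\,p_2^2-\pt_1 f\,p_1-\pt_2 f\,p_2-\pt_r f\,.
\end{align*}
The obstruction is already visible: the $D_1$-equation carries the cross terms $\pt_2\lambda_1\,p_1 p_2$ and $\pt_2 f\,p_2$, which mix in $p_2$, and symmetrically for $D_2$.

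The key step is to compute $D_1 v_1$ for $v_1=e^h(p_1+\Phi)$. Writing $D_1 v_1=(D_1 h)v_1+e^h(D_1 p_1+D_1\Phi)$ and taking $h,k$ to depend on $w$ only — which \Ref{eq:h} permits, since its right-hand side depends only on $w$ — the crucial identity is that the $w_2$-equation yields $D_1 w_2=-f+(\lambda_1-\lambda_2)p_2$. Hence $D_1 h=(\pt_1 h-\pt_2 h)f+\pt_2 h(\lambda_1-\lambda_2)p_2$, and by \Ref{eq:h} one has $\pt_2 h(\lambda_1-\lambda_2)=\pt_2\lambda_1$. Therefore $(D_1 h)v_1$ contributes exactly $+e^h\pt_2\lambda_1\,p_1 p_2$, which cancels the $-e^h\pt_2\lambda_1\,p_1 p_2$ arising from $e^h D_1 p_1$. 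The terms remaining linear in $p_2$ — namely $e^h\pt_2\lambda_1\Phi\,p_2$ from $(D_1 h)v_1$, $e^h\pt_2 f\,p_2$ from $e^h D_1 p_1$, and the $p_2$-part $e^h\pt_2\Phi(\lambda_1-\lambda_2)p_2$ of $e^h D_1\Phi$ — sum to zero precisely because \Ref{eq:Phi} rearranges to $\pt_2\lambda_1\Phi+\pt_2 f+\pt_2\Phi(\lambda_1-\lambda_2)=0$.

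Once every $p_2$ has been eliminated, I substitute $p_1=e^{-h}v_1-\Phi$ into the surviving terms (those built from $p_1^2$, $p_1$, and the inhomogeneity $\pt_r f$, $(\pt_1 h-\pt_2 h)f$, $\lambda_1\pt_r\Phi$, $(\pt_1\Phi-\pt_2\Phi)f$) and collect by powers of $v_1$. The $p_1^2$ term $-e^h\pt_1\lambda_1\,p_1^2$ becomes $-e^{-h}\pt_1\lambda_1\,v_1^2+2\pt_1\lambda_1\Phi\,v_1-e^h\pt_1\lambda_1\Phi^2$, which supplies $a_0$, part of $a_1$, and part of $a_2$; the term $e^h\pt_1 f\,p_1$ supplies $\pt_1 f$ to $a_1$ and $-e^h\Phi\pt_1 f$ to $a_2$; and the $f$- and $\pt_r$-terms complete $a_1$ and $a_2$. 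This reproduces the stated $a_0,a_1,a_2$, and the $v_2$-equation follows by the mirror computation, interchanging $1\leftrightarrow 2$ and replacing $f$ by $-f$.

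The computation is entirely mechanical once organised this way; the only genuine content — and the main place to be careful — is the bookkeeping verifying that \Ref{eq:Phi} annihilates every term linear in $p_2$ (and its mirror in $p_1$ for the $v_2$-equation), and the correct handling of the explicit $r$-dependence. That dependence sits in $f$ and is therefore inherited by $\Phi,\Psi$ (producing the $\lambda_1\pt_r\Phi$ and $\lambda_2\pt_r\Psi$ terms in $a_2,b_2$) but is absent from $h,k$, so no $\lambda_i\pt_r h$-type terms appear in $a_1,b_1$.
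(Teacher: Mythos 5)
Your proposal is correct and follows essentially the same route as the paper's proof: differentiate the reduced system \Ref{eq:red_source} in $r$, pass to $v_1,v_2$, use the defining relations \Ref{eq:h}--\Ref{eq:Phi} to cancel exactly the terms involving the transverse derivative $\pt_r w_2$ (resp. $\pt_r w_1$), and then collect powers of $v_i$ after substituting $\pt_r w_1=e^{-h}v_1-\Phi$. The only cosmetic differences are that you obtain the $v_2$-equation from the $1\leftrightarrow 2$, $f\mapsto -f$ symmetry instead of repeating the computation, and that you state explicitly that $h,k$ carry no explicit $r$-dependence while $\Phi,\Psi$ do --- both points consistent with, and implicit in, the paper's argument.
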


\begin{remark}
Note that, for any gas law, we have $\pt_1\lambda_1=\pt_2 \lambda_2=\frac{\mathscr{G}}{2}\geq 0 $. Hence, we have  $a_0\leq 0$ and $b_0\leq 0$.
\end{remark}

\begin{proof}
On the first hand, derivating $v_1$ with respect to $t$ and $r$, we get
\begin{align*}
\pt_t v_1=&  e^{h}\left(\pt_t\pt_r w_1+\pt_1\Phi \pt_t w_1 +\pt_2 \Phi \pt_t w_2\right)+e^h (\pt_1 h \pt_t w_1 +\pt_2 h\pt_t w_2)(\pt_r w_1 +\Phi)\,,\\
\pt_r v_1=&  e^{h}(\pt^2_r w_1+\pt_1\Phi \pt_r w_1 +\pt_2 \Phi \pt_r w_2+\pt_r \Phi )
+e^h (\pt_1 h \pt_r w_1 +\pt_2 h\pt_r w_2)(\pt_r w_1 +\Phi)\,.
\end{align*}
Hence
\begin{align*}
&e^{-h}(\pt_t v_1+\lambda_1\pt_r v_1)\\
&= \pt_t\pt_r w_1 +\lambda_1\pt^2_r w_1 +\lambda_1\pt_r \Phi +\pt_1\Phi(\pt_tw_1+\lambda_1\pt_rw_1) +\pt_2\Phi (\pt_t w_2+\lambda_1\pt_r w_2)\\
&\quad+(\pt_r w_1+\Phi)(\pt_1 h (\pt_t w_1+\lambda_1\pt_r w_1) +\pt_2 h (\pt_t w_2+\lambda_1\pt_r w_2))\,.
\end{align*}
Note that  $\pt_t w_1 +\lambda_1\pt_r w_1 = f$, and $
\pt_t w_2 +\lambda_1\pt_r w_2 = -f +(\lambda_1-\lambda_2)\pt_r w_2$ so that  
\begin{align}
&e^{-h}(\pt_t v_1+\lambda_1\pt_r v_1)\nonumber\\
&= \pt_t\pt_r w_1 +\lambda_1\pt^2_r w_1 +\lambda_1\pt_r \Phi +f \pt_1\Phi +\pt_2\Phi (-f+(\lambda_1-\lambda_2 ) \pt_r w_2)\label{eq:derivee} \\
&\quad+(\pt_r w_1+\Phi)(f\pt_1 h  +\pt_2 h (-f+(\lambda_1-\lambda_2) \pt_r w_2))\,. \nonumber
\end{align}
On the other hand, derivating with respect to time the equation in $w_1$, we obtain:
\begin{align*}
&\pt_t\pt_r w_1 +\lambda_1 \pt_r^2 w_1 \\
&= \pt_r f+\pt_1 f \pt_r w_1 +\pt_2 f\pt_r w_2 -\pt_1\lambda_1 (\pt_r w_1)^2 -\pt_2\lambda_1 \pt_r w_2 \pt_r w_1\\
&=\pt_r f +\pt_1 f (e^{-h} v_1-\Phi)+\pt_2 f\pt_r w_2 -\pt_1\lambda_1 (e^{-h} v_1-\Phi)^2 -\pt_2\lambda_1 \pt_r w_2(e^{-h} v_1-\Phi)\\
&=- e^{-2h}v_1^2  \pt_1\lambda_1 +e^{-h }v_1(\pt_1 f+2 \Phi\pt_1\lambda_1) +(\pt_r f-\Phi \pt_1 f -\Phi^2\pt_1\lambda_1 )\\
&\quad +\pt_r w_2 (\pt_2 f+\Phi \pt_2 \lambda_1) -\pt_2\lambda_1 e^{-h} v_1 \pt_r w_2\,. 
\end{align*}
Replacing $\pt_t \pt r w_1+\lambda_1 \pt_1^2 w_1$ by its expression in (\ref{eq:derivee}), we get
\begin{align*}
&e^{-h}(\pt_t v_1+\lambda_1\pt_r v_1)\\
&=- e^{-2h}v_1^2  \pt_1\lambda_1+e^{-h }v_1(\pt_1 f+2 \Phi\pt_1\lambda_1)  +(\pt_r f-\Phi \pt_1 f -\Phi^2\pt_1\lambda_1 )\\
&\quad +\pt_r w_2 (\pt_2 f+\Phi \pt_2 \lambda_1) -\pt_2\lambda_1 e^{-h} v_1 \pt_r w_2 +\lambda_1\pt_r \Phi +f (\pt_1\Phi -\pt_2\Phi )+\pt_2\Phi (\lambda_1-\lambda_2 ) \pt_r w_2\\
&\quad+e^{-h } v_1 f(\pt_1 h-\pt_2 h)  +  \pt_2 h (\lambda_1-\lambda_2) e^{-h } v_1 \pt_r w_2\\
&= - e^{-2h}v_1^2  \pt_1\lambda_1 +e^{-h }v_1(\pt_1 f+2 \Phi\pt_1\lambda_1+ f(\pt_1 h-\pt_2 h) )\\
&\quad +(\pt_r f-\Phi \pt_1 f -\Phi^2\pt_1\lambda_1  +f (\pt_1\Phi -\pt_2\Phi ) +\lambda_1\pt_r \Phi )\\
&\quad +\pt_r w_2 (\pt_2 f+\Phi \pt_2 \lambda_1 +\pt_2\Phi (\lambda_1-\lambda_2 ) ) +(\pt_2 h (\lambda_1-\lambda_2) -\pt_2\lambda_1) e^{-h} v_1 \pt_r w_2 \,.
\end{align*}
With our choice for $h $ and $\Phi$, the last line above vanishes.

In the same way, we have for $v_2$
\begin{align*}
\pt_t v_2=&  e^{k}\left(\pt_t\pt_r w_2+\pt_1\Psi \pt_t w_1 +\pt_2 \Psi \pt_t w_2\right)+e^k (\pt_1 k \pt_t w_1 +\pt_2 k\pt_t w_2)(\pt_r w_2 +\Psi)\,,\\
\pt_r v_2=&  e^{k}(\pt^2_r w_2+\pt_1\Psi \pt_r w_1 +\pt_2 \Psi \pt_r w_2+\pt_r \Psi )
+e^k (\pt_1 k \pt_r w_1 +\pt_2 k\pt_r w_2)(\pt_r w_2 +\Psi)\,.
\end{align*}
Hence
\begin{align*}
&e^{-k}(\pt_t v_2+\lambda_2\pt_r v_2)\\
&= \pt_t\pt_r w_2 +\lambda_2\pt^2_r w_2 +\lambda_2\pt_r \Psi +\pt_1\Psi(\pt_t w_1+\lambda_2\pt_rw_1) +\pt_2\Psi (\pt_t w_2+\lambda_2\pt_r w_2)\\
&\quad+(\pt_r w_2+\Psi)(\pt_1 k (\pt_t w_1+\lambda_2\pt_r w_1) +\pt_2 k (\pt_t w_2+\lambda_2\pt_r w_2))\,.
\end{align*}
Note that  $\pt_t w_1 +\lambda_2\pt_r w_1 = f+(\lambda_2-\lambda_1)\pt_r w_1$, and $
\pt_t w_2 +\lambda_2\pt_r w_2 = -f $ so that  
\begin{align*}
&e^{-k}(\pt_t v_2+\lambda_2\pt_r v_2)\\
&= \pt_t\pt_r w_2 +\lambda_2\pt^2_r w_2 +\lambda_2\pt_r \Psi +(f+(\lambda_2-\lambda_1)\pt_r w_1 )\pt_1\Psi -f\pt_2\Psi \\
&\quad+(\pt_r w_2+\Psi)((f+(\lambda_2-\lambda_1)\pt_r w_1)\pt_1 k  -f\pt_2 k )\,.
\end{align*}
Derivating with respect to time the equation in $w_2$, we obtain:
\begin{align*}
&\pt_t\pt_r w_2 +\lambda_2 \pt_r^2 w_2 \\
&= -\pt_r f-\pt_1 f \pt_r w_1 - \pt_2 f\pt_r w_2 -\pt_1\lambda_2 \pt_r w_1 \pt_r w_2 -\pt_2\lambda_2 (\pt_r w_2)^2 \\
&=-\pt_r f - \pt_1 f \pt_r w_1- \pt_2 f(e^{-k }v_2 -\Psi) -\pt_1\lambda_2 (e^{-k} v_2-\Psi)\pt_r w_1 -\pt_2\lambda_2 (e^{-k }v_2 -\Psi)^2 \\
&=- e^{-2k}v_2^2  \pt_2\lambda_2 +e^{-k }v_2(-\pt_2 f+2 \Psi\pt_2\lambda_2) +(-\pt_r f+\Psi \pt_2 f -\Psi^2\pt_2\lambda_2 )\\
&\quad +\pt_r w_1 (-\pt_1 f+\Psi \pt_1\lambda_2) -\pt_1\lambda_2 e^{-k} v_2 \pt_r w_1\,. 
\end{align*}
Replacing, we get
\begin{align*}
&e^{-k}(\pt_t v_2+\lambda_2\pt_r v_2)\\
&=- e^{-2k}v_2^2  \pt_2\lambda_2 +e^{-k }v_2(-\pt_2 f+2 \Psi\pt_2\lambda_2) +(-\pt_r f+\Psi \pt_2 f -\Psi^2\pt_2\lambda_2 )\\
&\quad +\pt_r w_1 (-\pt_1 f+\Psi \pt_1\lambda_2) -\pt_1\lambda_2 e^{-k} v_2 \pt_r w_1  +\lambda_2\pt_r \Psi +(f+(\lambda_2-\lambda_1)\pt_r w_1 )\pt_1\Psi -f\pt_2\Psi \\
&\quad+e^{-k}v_2 ((f+(\lambda_2-\lambda_1)\pt_r w_1)\pt_1 k  -f\pt_2 k )\\
&= - e^{-2k}v_2^2  \pt_2\lambda_2 +e^{-k }v_2(-\pt_2 f+2 \Psi\pt_2\lambda_2 +f (\pt_1 k-\pt_2k)) \\
&\quad +(-\pt_r f+\Psi \pt_2 f -\Psi^2\pt_2\lambda_2+\lambda_2\pt_r \Psi +f(\pt_1\Psi-\pt_2\Psi) )\\
&\quad +\pt_r w_1 (-\pt_1 f+\Psi \pt_1\lambda_2+(\lambda_2-\lambda_1)\pt_1\Psi )   +e^{-k}v_2 \pt_r w_1((\lambda_2-\lambda_1)\pt_1 k -\pt_1\lambda_2 )\,.
\end{align*}
With our choice for $k $ and $\Psi$, the last line above vanishes.
\end{proof}

Similarly as above, we want to derive a diagonal system on $rv_1$ and $rv_2$.
\begin{corollary}
With the notation of Lemma \ref{lem:v1v2}, we have for all $\ell\in \naturali$, 
\begin{equation}\label{eq:rv}
\left\{
\begin{array}{lcl}
\pt_t(r^\ell v_1) +\lambda_1(w) \pt_r (r^\ell v_1) &=& \frac{a_0(r,w)}{r^\ell} (r^\ell v_1)^2 +(a_1(r,w)+\frac{\ell\lambda_1 }{r}) (r^\ell v_1)+r^\ell a_2(r,w)\,,\\
\pt_t (r^\ell v_2) +\lambda_2 (w) \pt_r (r^\ell v_2) &=& \frac{b_0(r,w)}{r^\ell} (r^\ell v_2)^2 +(b_1(r,w) +\frac{\ell\lambda_2 }{r}) (r^\ell v_2)+r^\ell b_2(r,w)\,.\\
\end{array}
\right.
\end{equation}
\end{corollary}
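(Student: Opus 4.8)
The plan is to treat this as an immediate consequence of the equations (\ref{eq:v1v2}) established in Lemma \ref{lem:v1v2}, exploiting that the multiplier $r^\ell$ does not depend on $t$. I would set $u_1 := r^\ell v_1$ and $u_2 := r^\ell v_2$ and compute the transport derivative of each along its own characteristic field. Since $\pt_t(r^\ell)=0$ and $\pt_r(r^\ell)=\ell r^{\ell-1}$, the product rule gives
\[
\pt_t u_1 +\lambda_1\pt_r u_1 = r^\ell\left(\pt_t v_1 +\lambda_1\pt_r v_1\right)+\ell\lambda_1 r^{\ell-1} v_1\,.
\]
I would then substitute the right-hand side of the first equation of (\ref{eq:v1v2}) for the bracketed transport term, obtaining $r^\ell\left(a_0 v_1^2 +a_1 v_1 +a_2\right)+\ell\lambda_1 r^{\ell-1} v_1$.

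The only remaining step is a purely algebraic rescaling, rewriting each term so that $v_1$ enters solely through the combination $u_1=r^\ell v_1$. The quadratic term becomes $r^\ell a_0 v_1^2 = \frac{a_0}{r^\ell}(r^\ell v_1)^2$; the linear term is already $a_1(r^\ell v_1)$; the extra term produced by $\pt_r$ is $\ell\lambda_1 r^{\ell-1} v_1 = \frac{\ell\lambda_1}{r}(r^\ell v_1)$; and the source $r^\ell a_2$ is left unchanged. Collecting the two contributions proportional to $r^\ell v_1$ yields precisely the stated coefficient $a_1+\frac{\ell\lambda_1}{r}$, which establishes the first line of (\ref{eq:rv}). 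The computation for $u_2=r^\ell v_2$ is word-for-word identical, using the second equation of (\ref{eq:v1v2}) together with $b_0,b_1,b_2,\lambda_2$ in place of $a_0,a_1,a_2,\lambda_1$.

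There is no genuine obstacle here: the diagonal (transport-plus-Riccati) structure of (\ref{eq:v1v2}) is preserved automatically, because multiplication by the $t$-independent factor $r^\ell$ commutes with $\pt_t$ and, through $\pt_r$, produces only the single lower-order correction $\frac{\ell\lambda_1}{r}$ to the linear coefficient. The one point requiring a little care is tracking the powers of $r$ in the quadratic term, so as to confirm that the coefficient of $(r^\ell v_1)^2$ is indeed $\frac{a_0}{r^\ell}$ and not $a_0 r^\ell$; this follows from $r^\ell\cdot v_1^2 = \frac{1}{r^\ell}(r^\ell v_1)^2$.
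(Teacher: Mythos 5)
Your proof is correct and is exactly the computation the paper has in mind: the paper states this corollary without proof precisely because it follows from the product rule applied to $r^\ell v_i$ (the factor $r^\ell$ being $t$-independent) together with substitution of the equations of Lemma \ref{lem:v1v2}, which is what you do. The bookkeeping of the powers of $r$ in the quadratic, linear, and source terms is handled correctly, so nothing is missing.
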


In the following we give some properties of the coefficients $a_0, a_1, a_2, b_0,b_1, b_2$:
\begin{lemma}\label{lem:compab}
Let us consider a Bethe-Weyl gas, satisfying $1< \mathscr{G}< 2$ and such that $(\rho \mapsto \frac{c(\rho)}{\rho})$ is integrable in 0.
There exist smooth functions $\bar a_i : \reali^2\to \reali$ and $\bar b_i : \reali^2\to \reali$ such that, for $i\in \{1,2\}$ the coefficients $a_i$ and $b_i$ defined as in Lemma \ref{lem:v1v2} can be written $a_i(r,w)= \frac{\bar a_i(w)}{r^i}$ and $b_i(r,w)= \frac{\bar b_i(w)}{r^i}$. In particular,   $\bar a_i$ and $\bar b_i$ are not depending on $r$. 

Similarly, there  exist smooth functions $\phi : \reali^2\to \reali$ and $\psi : \reali^2\to \reali$ such that the coefficients $\Phi$ and $\Psi$ can be written $\Phi(r,w)= \frac{\phi(w)}{r}$ and $\Psi(r,w)= \frac{\psi(w)}{r^i}$. In particular $\phi$ and $\psi$ are not depending on $r$. 
\end{lemma}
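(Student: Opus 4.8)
The plan is to exploit a homogeneity-in-$r$ structure. Call a function $G(r,w)$ \emph{homogeneous of degree $-n$} if it can be written $G(r,w)=r^{-n}\bar G(w)$ with $\bar G$ independent of $r$; the assertion to prove is exactly that $a_i,b_i$ are homogeneous of degree $-i$ and that $\Phi,\Psi$ are homogeneous of degree $-1$ (the exponent $i$ in the statement for $\Psi$ being a misprint for $1$). Two elementary facts drive the argument: the operators $\pt_1,\pt_2$ preserve the degree while $\pt_r$ lowers it by one, and a product of homogeneous terms has degree the sum of the degrees. For the seed data: since $u=\frac{w_1+w_2}{2}$ and $H(\rho)=\frac{w_2-w_1}{2}$, the density $\rho$ and hence $c$, $\lambda_1=u-c$, $\lambda_2=u+c$ are functions of $w$ alone, i.e.\ homogeneous of degree $0$; whereas $f(r,w)=\frac{(d-1)uc}{r}=\frac{g(w)}{r}$ with $g(w)=(d-1)uc$ independent of $r$, so $f$ is homogeneous of degree $-1$.

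Next I would fix the degrees of the auxiliary functions. Equation (\ref{eq:h}) only prescribes $\pt_2 h$ (resp.\ $\pt_1 k$), and by the previous step its right-hand side $\frac{\pt_2\lambda_1}{\lambda_1-\lambda_2}$ (resp.\ $\frac{\pt_1\lambda_2}{\lambda_2-\lambda_1}$) depends on $w$ only; integrating in $w_2$ (resp.\ $w_1$), the free integration constant is a function of $(r,w_1)$ (resp.\ $(r,w_2)$) which I simply take independent of $r$, so that $h,k$, and therefore $e^{\pm h},e^{\pm k}$, are homogeneous of degree $0$. Then in (\ref{eq:Phi}) the right-hand side $\frac{-e^h\pt_2 f}{\lambda_1-\lambda_2}$ is homogeneous of degree $-1$ (product of the degree-$0$ factor $e^h/(\lambda_1-\lambda_2)$ with the degree-$(-1)$ factor $\pt_2 f$); integrating the identity for $\pt_2(e^h\Phi)$ in $w_2$ and choosing the integration constant of the form $(\text{function of }w_1)/r$ gives $e^h\Phi=\phi_0(w)/r$, whence $\Phi=\phi(w)/r$ after absorbing the degree-$0$ factor $e^{-h}$ into $\phi$. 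The identical computation yields $\Psi=\psi(w)/r$. This choice of integration constants is the only genuine point in the proof — all the defining relations being underdetermined, one must check that the asserted homogeneity is an admissible choice — and it presents no difficulty, since the constants are unconstrained functions of a single Riemann invariant and of $r$.

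It then remains only to read off the degrees in the formulas of Lemma \ref{lem:v1v2}. The term $a_0=-e^{-h}\pt_1\lambda_1$ is a product of two degree-$0$ factors, hence degree $0$; each summand of $a_1$, namely $\pt_1 f$, $2\Phi\,\pt_1\lambda_1$ and $(\pt_1 h-\pt_2 h)f$, has total degree $-1$; and each of the five summands of $a_2$ — $\pt_r f$, $\Phi\,\pt_1 f$, $\Phi^2\pt_1\lambda_1$, $(\pt_1\Phi-\pt_2\Phi)f$ and $\lambda_1\pt_r\Phi$ — has degree $-2$, using that $\pt_r$ drops the degree of the degree-$(-1)$ quantities $f$ and $\Phi$ down to $-2$, so that the common degree-$0$ prefactor $e^h$ leaves $a_2=\bar a_2(w)/r^2$. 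The computation for $b_0,b_1,b_2$ is word for word the same, which completes the proof.
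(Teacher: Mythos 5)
Your proof is correct, but it takes a genuinely different route from the paper's. The paper proceeds by explicit computation: it solves \Ref{eq:h} in closed form, finding $h=k=\frac{1}{2}\ln(H')$, builds $\Phi=\frac{d-1}{r}\left(uA-B\right)$ and $\Psi=\frac{d-1}{r}\left(uA+B\right)$ from auxiliary functions $A$, $B$ of $\rho$ alone, and then writes out $a_0,a_1,a_2,b_0,b_1,b_2$ term by term, so that the powers of $r$ are read off from the final formulas. You replace all of this by a degree-counting (homogeneity) argument: $\rho$, $c$, $\lambda_1$, $\lambda_2$ have degree $0$, $f$ has degree $-1$, the operators $\pt_1,\pt_2$ preserve degree, $\pt_r$ lowers it by one, and products add degrees; the only genuine point --- which you correctly single out --- is that $h,k,\Phi,\Psi$ are underdetermined by \Ref{eq:h}--\Ref{eq:Phi}, so one must check that the integration constants can be chosen to make $h,k$ of degree $0$ and $\Phi,\Psi$ of degree $-1$ (note that $\pt_r h=\pt_r k=0$ is in fact forced if one wants the computation proving Lemma \ref{lem:v1v2} to be valid, since no $\pt_r h$ term appears there, so your ``choice'' is the only admissible one). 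You also correctly read the exponent in the statement for $\Psi$ as a misprint for $1$. As for what each approach buys: yours is shorter, less error-prone, and makes transparent why the exponent of $r$ in $a_i,b_i$ is exactly $i$; the paper's computation, on the other hand, produces the explicit expressions of $h$, $\Phi$, $\Psi$ and of the coefficients that are reused elsewhere --- the sign $a_0=b_0=-\mathscr{G}/(2\sqrt{H'})\leq 0$, the bounds on $\phi$ and $\psi$ invoked in Proposition \ref{prop:constr}, and the closed-form coefficients specialized to perfect and Van der Waals gases in Section \ref{sec:vdw} --- so the explicit route is not redundant in the economy of the paper, even though for the statement of this lemma alone your argument suffices.
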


\begin{proof}
We have $\lambda_1=u-c$, and $u=\frac{w_1+w_2}{2}$, $H=\frac{w_2-w_1}{2}$. Hence $\pt_2 \lambda_1=\frac{1}{2}-\frac{c'}{2H'}$ where $\pt_2\lambda_1$ stands for $\frac{\pt \lambda_1(w_1,w_2)}{\pt w_2}$, and
\[
\pt_2 h =\frac{H''}{4(H')^2}\,.
\]
Finally $h=\frac{1}{2}\ln(H')$ satisfies the equation. In the same way, we obtain $k=h=\frac{1}{2}\ln (H')$.

Using $H'(\rho)=\frac{c(\rho)}{\rho}$ and $\mathscr{G}=\frac{1}{c(\rho)}(c(\rho)\rho)'$, we note that $c'=H' (\mathscr{G}-1)$ and that $H''=\frac{H'}{\rho}(\mathscr{G}-2)$. Then
\begin{align*}
\pt_2(e^h \Phi) &=\frac{\sqrt{H'}}{2c}\frac{d-1}{r}\left(\frac{c}{2}+ u\frac{c'}{2H'}\right)= \frac{d-1}{2r}\left(\frac{1}{2}\sqrt{H'} +u \frac{ c'}{2c\sqrt{H'}} \right)\,.
\end{align*}
Let us  define $g$, $A$ and $B$ such that
\begin{align*}
\frac{\d{}}{\d{\rho}}\left(2\sqrt{H'(\rho)}g(\rho)\right)&= \frac{1}{\rho}\sqrt{H'(\rho)}\,,\\
A&=1+g\,,\\
 \frac{\d{}}{\d{\rho}}\left(2\sqrt{H'(\rho)}B(\rho)\right)&= H'(\rho)\sqrt{H'(\rho)}(1+2g(\rho))\,.
\end{align*}
Then, noting that, for any function $f$ of $u $ and $\rho$ we have 
\[
\pt_2 f= \pt_2 u \,\pt_u f+\pt_2\rho \,\pt_\rho f=\frac{1}{2} \pt_u f+\frac{1}{2 H'(\rho)}\pt_\rho f\,.
\]
Hence, we can check that $\pt_2(e^h \Phi) = \frac{d-1}{2r} \pt_2\left(2\sqrt{H'}u A -2\sqrt{H'} B\right)$ and we can choose
\[
\Phi =\frac{d-1}{r} \left( uA - B\right)\,.
\]
In the same way, we have 
\begin{align*}
\Psi=\frac{d-1}{r}\left( uA +B \right)\,.
\end{align*}

Note that
\begin{align*}
\pt_\rho A&=\frac{1}{2\rho} \left(1-g(\mathscr{G}-2)\right)\,,& 
\pt_\rho B&=\frac{1}{2\rho} \left(c(1+2g) -B(\mathscr{G}-2)\right)\,.
\end{align*}
Let us now compute the expression of $a_0, a_1, a_2$:
\begin{align*}
a_0&=-e^{-h}\pt_1\lambda_1=-\frac{1}{\sqrt{H'}}\left(\frac{1}{2}+\frac{c'}{2H'}\right)=\frac{-1}{\sqrt{H'}}\frac{\mathscr{G}}{2}\leq 0\,,\\
a_1&=\frac{d-1}{2r}(c-u(\mathscr{G}-1)) + \mathscr{G}\frac{d-1}{r} \left( uA - B \right) -\frac{d-1}{2r} u(\mathscr{G}-2)\\
&=\frac{d-1}{r}\left[\frac{c}{2} -B\mathscr{G} +\frac{u}{2}\left(3+2\mathscr{G}g  \right)\right]\,,
\end{align*}
and
\begin{align*}
a_2&=\frac{(d-1)\sqrt{H'}}{r^2}\big[ -uc- \frac{d-1}{2}\left((uA-B)(c-u(\mathscr{G}-1)) +\mathscr{G}(uA-B)^2\right)\\
&\quad -\frac{d-1}{2} u( u (1-(\mathscr{G}-2)(A-1)) +( B(\mathscr{G}-2)-c(2A-1 ) ) ) \\
&\quad -(u-c)(uA-B)\big]\\
&=\frac{(d-1)\sqrt{H'}}{r^2}\big[ -uc-(u^2A -u(B+cA)+cB  )  \\
&\qquad - \frac{d-1}{2}\big(  u(Ac +B (\mathscr{G}-1))   -Bc-u^2 A (\mathscr{G}-1)  +\mathscr{G}(u^2A^2 -2uAB+B^2)\\
&\quad \qquad +  u^2 (1-(\mathscr{G}-2)(A-1)) +u ( B(\mathscr{G}-2)-c(2A-1 ) ) )\big)\big] \\
\end{align*}
Let us now consider the expression of $b_0$, $b_1$, $b_2$ as given in (\ref{eq:B0B1B2}). 
\begin{align*}
b_0&= a_0=\frac{-\mathscr{G}}{2\sqrt{H'}}\,,\\
b_1&= \frac{d-1}{2r}\left( u(2\mathscr{G}(A-1) +3)  +2\mathscr{G}B -c\right)\,,\\
b_2&=\frac{(d-1)\sqrt{H'}}{r^2}\big[
-u^2A- u(c(A-1) +B)-cB + \frac{d-1}{2}  \big( u^2(-\mathscr{G} A^2+ (2\mathscr{G}-3)A +1-\mathscr{G}) \\
&\qquad \qquad \qquad
+u( B(2\mathscr{G}-3-2\mathscr{G}A) -c(A-1) ) +Bc -\mathscr{G}B^2\big)
\big]\,.
\end{align*}

\end{proof}

\subsection{$\C1$ estimate on $w_1$ and $w_2$}

We derive now $\C1$-estimates  for $w_1$ and $w_2$. First we derive an upper bound on $\pt_r w_1$ and $\pt_r w_2$.
\begin{lemma}\label{lem:up}
Let us consider a Bethe-Weyl fluid and let $w=(w_1, w_2)$ be a regular solution of (\ref{eq:red_source}) with a time of existence $\mathcal{T}>0$. Let $\alpha, R>0$ with $\alpha<\mathcal{T}$.  
Let $t\mapsto X_1(t)$ and $t\mapsto X_2(t)$ be the characteristics defined as in (\ref{def:car}) and passing through $(\alpha, R)$. Let $T$ be such that $T>\alpha$ and $T<\mathcal{T}$. We assume that $X_1(t), X_2(t)$ are well-defined on $[\alpha,T]$. Let furthermore $v_1$, $v_2$ be defined as in (\ref{def:v}). Then, for all $t\in [0,\mathcal{T}[$, we have
\begin{eqnarray*}
v_1(t, X_1(t))e^{-\int_\alpha^t a_1(s, X_1(s))\d{s}}&\leq &v_1(\alpha, X_1(\alpha)) +\int_\alpha^t a_2(s, X_1(s))e^{\int_s^t a_1(\tau, X_1(\tau))\d{\tau}}\,,\\
v_2(t, X_2(t))e^{-\int_\alpha^t b_1(s, X_2(s))\d{s}}&\leq &v_2(\alpha, X_2(\alpha)) +\int_\alpha^t b_2(s, X_2(s))e^{\int_s^t b_1(\tau, X_2(\tau))\d{\tau}}\,.
\end{eqnarray*}

More generally, for any $\ell\in \naturali$,  
\begin{eqnarray*}
&&X_1(t)^\ell v_1(t, X_1(t))\\
&\leq & X_1(\alpha)^\ell v_1(\alpha, X_1(\alpha))e^{\int_\alpha^t (\frac{\ell\lambda_1}{X_1(s)} +a_1(s, X_1(s)))\d{s}} + \int_\alpha^t X_1(s)^\ell  a_2(s, X_1(s))e^{\int_s^t  (\frac{\ell \lambda_1}{X_1(\tau)} +a_1(\tau, X_1(\tau)))\d{\tau}}\,,\\
&&X_2(t)^\ell v_2(t, X_2(t))\\
&\leq &X_2(\alpha)^\ell v_2(\alpha, X_2(\alpha)) e^{\int_\alpha^t  (\frac{\ell \lambda_2}{X_2(s)} +b_1(s, X_2(s)))\d{s}} + \int_\alpha^t X_2^\ell  (s) b_2(s, X_2(s))e^{\int_s^t (\frac{\ell \lambda_2}{X_2(\tau)} +b_1(\tau, X_2(\tau)))\d{\tau}}\,.
\end{eqnarray*}
\end{lemma}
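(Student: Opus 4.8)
Lemma \ref{lem:up} asserts differential-inequality bounds for $v_1$ along $X_1$ and $v_2$ along $X_2$, together with their $r^\ell$-weighted generalizations. By symmetry between the two equations in \eqref{eq:v1v2} (and in \eqref{eq:rv}), it suffices to prove the statement for $v_1$ and then quote the identical argument for $v_2$; likewise the unweighted case is exactly $\ell=0$ of the general case, so I would only prove the $r^\ell$-weighted inequality and deduce the first two displays as the special case $\ell=0$.

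**The approach.** The plan is to read off along the characteristic a scalar Riccati-type ODE and then discard the (nonpositive) quadratic term to linearize. Fix $\ell\in\naturali$ and set $z(t):=X_1(t)^\ell\,v_1(t,X_1(t))$. Differentiating in $t$ and using $\frac{\d X_1}{\d t}=\lambda_1(w(t,X_1(t)))$ from \eqref{def:car}, the left-hand side becomes the transport operator $\pt_t(r^\ell v_1)+\lambda_1\,\pt_r(r^\ell v_1)$ evaluated along $X_1$, which by the corollary \eqref{eq:rv} equals
\begin{equation*}
\frac{\d z}{\d t}=\frac{a_0}{X_1^\ell}\,z^2+\Bigl(a_1+\frac{\ell\lambda_1}{X_1}\Bigr)z+X_1^\ell a_2,
\end{equation*}
where $a_0,a_1,a_2$ are evaluated at $(X_1(t),w(t,X_1(t)))$. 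By the remark following Lemma \ref{lem:v1v2} we have $a_0=-e^{-h}\pt_1\lambda_1\leq 0$, so $\frac{a_0}{X_1^\ell}z^2\leq 0$ and hence
\begin{equation*}
\frac{\d z}{\d t}\leq \Bigl(a_1+\frac{\ell\lambda_1}{X_1}\Bigr)z+X_1^\ell a_2.
\end{equation*}

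**Integration.** This is now a linear differential inequality $z'\leq \beta(t)z+\gamma(t)$ with $\beta(t)=a_1+\ell\lambda_1/X_1$ and $\gamma(t)=X_1^\ell a_2$. I would apply the integrating factor $E(t):=\exp\bigl(-\int_\alpha^t\beta(s)\,\d s\bigr)$, so that $\frac{\d}{\d t}\bigl(E(t)z(t)\bigr)=E(t)\bigl(z'-\beta z\bigr)\leq E(t)\gamma(t)$. Integrating from $\alpha$ to $t$ (legitimate since $X_1$ is assumed well-defined on $[\alpha,T]$, so all coefficients are continuous there) and multiplying through by $E(t)^{-1}=\exp\bigl(\int_\alpha^t\beta(s)\,\d s\bigr)$ gives
\begin{equation*}
z(t)\leq z(\alpha)\,e^{\int_\alpha^t\beta(s)\,\d s}+\int_\alpha^t \gamma(s)\,e^{\int_s^t\beta(\tau)\,\d\tau}\,\d s,
\end{equation*}
which is precisely the claimed weighted inequality once $\beta$ and $\gamma$ are written out; setting $\ell=0$ recovers the first two displays (with the understanding $X_1(\alpha)^0=1$).

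**Main obstacle.** The argument is essentially a one-line Gronwall/integrating-factor computation, so there is no serious analytic difficulty. The only point requiring care is justifying that one may differentiate $z(t)=X_1(t)^\ell v_1(t,X_1(t))$ and substitute \eqref{eq:rv}: this needs $v_1$ to be $\C1$ (which holds since $w$ is a regular solution and $v_1$ is built from $\pt_r w_1$ and smooth coefficients via \eqref{def:v}), and needs the characteristic $X_1$ to stay in the domain of existence, which is exactly the hypothesis that $X_1(t),X_2(t)$ are well-defined on $[\alpha,T]$. I would state these regularity facts briefly and then present the chain-rule identification of $\frac{\d z}{\d t}$ with the transport operator, which is the one genuinely substantive step; everything after is the standard integrating-factor manipulation.
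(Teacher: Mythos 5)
Your proof is correct and follows essentially the same route as the paper: read off the Riccati equation along the characteristic from \eqref{eq:v1v2}/\eqref{eq:rv}, discard the quadratic term using $a_0\leq 0$ (resp.\ $b_0\leq 0$), and integrate the resulting linear inequality with an integrating factor. The only cosmetic difference is that you prove the weighted case for general $\ell$ and specialize to $\ell=0$, whereas the paper does the unweighted case first and remarks that the weighted one is identical.
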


\begin{proof}
Let us consider $v_1$, the argument being similar for $v_2$. Let us denote $y_1(t)=v_1(t, X_1(t))$.
The coefficient $a_0$ being non-positive, we have by Lemma \ref{lem:v1v2} $y_1'(t)\leq a_1 y_1 +a_2$. Hence, 
\[
(y_1 e^{-\int_\alpha^t a_1(s, X_1(s))\d{s}})'\leq  a_2(t,X_1(t)) e^{-\int_\alpha^t a_1(s, X_1(s))\d{s}}\,.
\]
Integrating we obtain the desired estimate. The second set of estimates is obtained in the same way considering the system (\ref{eq:rv}) instead of (\ref{eq:v1v2}).
\end{proof}

We now derive a lower bound on $v_1$ and $v_2$. First we consider the case in which the initial condition $v_1(\alpha, R)$ is positive.
\begin{lemma}\label{lem:w'pos} 
Let us consider a Bethe-Weyl fluid and let $w=(w_1, w_2)$ be a regular solution of (\ref{eq:red_source}) with a time of existence $\mathcal{T}>0$. Let $\alpha, R>0$ with $\alpha<\mathcal{T}$.  
Let $t\mapsto X_1(t)$ and $t\mapsto X_2(t)$ be the characteristics defined as in (\ref{def:car}) and passing through $(\alpha, R)$. Let $T$ be such that $T>\alpha$ and $T<\mathcal{T}$.  We assume that $X_1(t), X_2(t)$ are well-defined on $[\alpha,T]$. 

Let us denote 
$\bar a_1= r a_1$, $\bar a_2=r^2a_2$, $\bar b_1=rb_1$, $\bar b_2=r^2b_2$ as in Lemma \ref{lem:compab}, and 
\begin{align}
A_0&=\max_{[0,T]}\modulo{a_0(t,X_1(t))}\,, &A_1&=\max_{[0,T]}\modulo{\bar a_1(t,X_1(t))}\,, &A_2&=\max_{[0,T]}\modulo{\bar a_2(t,X_1(t))}\,, \label{eq:A0A1A2}\\
B_0&=\max_{[0,T]}\modulo{b_0(t,X_2(t))}\,, &B_1&=\max_{[0,T]}\modulo{\bar b_1(t,X_2(t))}\,, &B_2&=\max_{[0,T]} \modulo{\bar b_2(t, X_2(t))}\,.\label{eq:B0B1B2}
\end{align} 
We also denote, for all $\zeta_0, \zeta_1, \zeta_2$ positive, $x(\zeta_0, \zeta_1, \zeta_2)$ the unique positive solution of 
\begin{align}
Q(r)=\frac{\zeta_1}{\sqrt{\zeta_0 \zeta_2}}\,, \qquad \textrm{ where } Q(r):=re^r\,. \label{eq:phi}
\end{align} 
Furthermore, we denote 
\begin{align}
K_a(\theta)= \left(\int_\alpha^\theta\modulo{a_2(t, X_1(t))}\d{t} \right)\cdot \exp{ (\int_\alpha^\theta \modulo{a_1(t, X_1(t))}\d{t})}\,,\label{eq:Ka}\\
K_b(\theta)= \left(\int_\alpha^\theta\modulo{b_2(t, X_2(t))}\d{t}  \right)\cdot \exp{ (\int_\alpha^\theta \modulo{b_1(t, X_2(t))}\d{t})}\,.\label{eq:Kb}
\end{align}
Then:
\begin{enumerate}
\item If  $v_1(\alpha, R)\geq 0$, then for all $t\in \left[\alpha, \alpha+\frac{x(A_0, A_1, A_2)}{A_1}\min_{[0,T]} X_1 (t)\right]$, we have
\begin{align*}
\frac{-K_a(t)}{1-K_a(t)\int_\alpha^t \modulo{a_0(\tau, X_1(\tau))}e^{\int_\alpha^\tau \modulo{a_1(u,X_1(u))} \d{u}}\d{\tau} }\leq &v_1(t, X_1(t))e^{-\int_\alpha^t a_1(s, X_1(s))\d{s}} \\
\leq &v_1(\alpha, X_1(\alpha ))+ K_a(t) \,.
\end{align*}
\item  If $v_2(\alpha, R)\geq 0$, then for all $t\in \left[\alpha, \alpha+\frac{x(B_0, B_1, B_2)}{B_1}\min_{[0,T]} X_2 (t)\right]$, we have
\begin{align*}
\frac{-K_b(t)}{1-K_b(t)\int_\alpha^t \modulo{b_0(\tau, X_2(\tau))}e^{\int_\alpha^\tau \modulo{b_1(u,X_2(u))} \d{u}}\d{\tau} }
\leq & v_2(t, X_2(t)) e^{-\int_\alpha^t b_1(s, X_2(s))\d{s}}\\ 
\leq &v_2(\alpha, X_2(\alpha ))+ K_b(t) \,.
\end{align*}
\end{enumerate}
\end{lemma}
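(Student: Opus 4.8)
The plan is to read both estimates off the scalar Riccati ODE that $y_1(t):=v_1(t,X_1(t))$ satisfies along the first characteristic. By Lemma \ref{lem:v1v2}, $y_1'=a_0 y_1^2+a_1 y_1+a_2$ with the coefficients evaluated at $(t,X_1(t))$, and by the Remark following that lemma $a_0\leq 0$. The argument for $y_2(t):=v_2(t,X_2(t))$ is identical after replacing $(a_i,A_i,K_a,X_1)$ by $(b_i,B_i,K_b,X_2)$, so I will only treat $v_1$.

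For the upper bound I would simply discard the nonpositive quadratic term: from $a_0\leq 0$ we get $y_1'\leq a_1 y_1+a_2$, so the integrating-factor quantity $z_1(t):=y_1(t)\exp(-\int_\alpha^t a_1\,\d{s})$ obeys $z_1'\leq a_2\exp(-\int_\alpha^t a_1\,\d{s})$. Integrating from $\alpha$ and majorising $a_2\leq\modulo{a_2}$ and $\exp(-\int_\alpha^s a_1)\leq\exp(\int_\alpha^t\modulo{a_1})$ for $s\leq t$ yields $z_1(t)\leq v_1(\alpha,X_1(\alpha))+K_a(t)$, which is the asserted upper bound; this is exactly the majorisation already contained in Lemma \ref{lem:up}.

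The lower bound is the crux, and here the quadratic term works against us. In the variable $z_1$ one computes $z_1'=a_0\,e^{A_1}z_1^2+a_2\,e^{-A_1}$, where $A_1(t)=\int_\alpha^t a_1\,\d{s}$, with $z_1(\alpha)=v_1(\alpha,R)\geq 0$. Writing $E(t)=\exp(\int_\alpha^t\modulo{a_1}\,\d{s})$, $N(t)=K_a(t)=\bigl(\int_\alpha^t\modulo{a_2}\bigr)E$, $I(t)=\int_\alpha^t\modulo{a_0}E\,\d{\tau}$ and $M=1-NI$, the claimed lower bound is exactly $z_1\geq-\Omega$ with $\Omega:=N/M$. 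I would prove this by a comparison argument. The key computation is that, because $M+NI=1$, differentiation gives
\begin{equation}
\Omega'=\frac{N'+N^2\modulo{a_0}E}{M^2}\,,\qquad\text{hence}\qquad \Omega'-\modulo{a_0}E\,\Omega^2=\frac{N'}{M^2}\geq\modulo{a_2}E\,,\nonumber
\end{equation}
the last inequality holding whenever $0<M\leq 1$ because $N'\geq\modulo{a_2}E$ and $M^2\leq 1$. Thus $\Omega$ is a supersolution of $\omega'=\modulo{a_0}E\,\omega^2+\modulo{a_2}E$ with $\Omega(\alpha)=0$. Since $e^{\pm A_1}\leq E$, at any first downward zero $t_1$ of $g:=z_1+\Omega$ one has $z_1(t_1)^2=\Omega(t_1)^2$ and $g'(t_1)\geq(-\modulo{a_0}E\Omega^2-\modulo{a_2}E)+(\modulo{a_0}E\Omega^2+\modulo{a_2}E)=0$, contradicting the crossing; by the standard comparison lemma $z_1\geq-\Omega$ on the whole interval.

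It remains to see that the stated time window is precisely the one on which $M>0$, so that $\Omega$ is finite and the formula legitimate. Using $\modulo{a_0}\leq A_0$, $\modulo{a_1}=\modulo{\bar a_1}/r\leq A_1/\min_{[0,T]}X_1$ and $\modulo{a_2}=\modulo{\bar a_2}/r^2\leq A_2/(\min_{[0,T]}X_1)^2$ from Lemma \ref{lem:compab}, I would bound $E(t)\leq e^{\beta(t-\alpha)}$ with $\beta=A_1/\min_{[0,T]}X_1$ and, after the elementary estimate $e^{u}-1\leq u e^{u}$, obtain $K_a(t)I(t)\leq\frac{A_0 A_2}{A_1^2}\bigl(\beta(t-\alpha)\,e^{\beta(t-\alpha)}\bigr)^2$. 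Setting $x=\beta(t-\alpha)$, the right-hand side equals $\frac{A_0 A_2}{A_1^2}Q(x)^2$ with $Q(x)=xe^x$, which stays $\leq 1$ exactly up to the value $x$ solving $Q(x)=A_1/\sqrt{A_0 A_2}$, i.e. up to $t=\alpha+\frac{x(A_0,A_1,A_2)}{A_1}\min_{[0,T]}X_1$; monotonicity of $Q$ then gives $M>0$ in the interior. The main obstacle is this last matching — establishing the supersolution identity and verifying that the endpoint of the admissible interval is exactly where $K_a I=1$.
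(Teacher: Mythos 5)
Your proof is correct, and it differs from the paper's mainly in how the key ODE estimate is established. The paper gets the lower bound by quoting its appendix result, Lemma \ref{lem:hor2} (a H\"ormander-type lemma giving existence and two-sided bounds for the Riccati equation $y'=a_0y^2+a_1y+a_2$), and then spends its effort verifying the hypothesis (\ref{cond:hor}) of that lemma on the stated window --- using exactly your scaling computation: $\modulo{a_1}\leq A_1/\min X_1$, $\modulo{a_2}\leq A_2/(\min X_1)^2$ from Lemma \ref{lem:compab}, the bound by $Q$, and monotonicity of $Q$. You instead inline the ODE estimate: after the same integrating-factor reduction, you verify directly, via the identity $M+NI=1$, that $\Omega=K_a/(1-K_aI)$ is a supersolution of $\omega'=\modulo{a_0}E\,\omega^2+\modulo{a_2}E$ and then compare. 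This amounts to a self-contained reproof of the relevant case of Lemma \ref{lem:hor2}, with two small gains: your $K_a(t)$ is time-dependent, matching the statement exactly (the appendix lemma works with the constant $K$ evaluated at the endpoint), and you avoid the appendix proof's separate handling of the solution's first sign change. Two caveats. First, your ``first downward zero'' argument is not airtight as phrased: $g'(t_1)\geq 0$ at a touching point does not by itself preclude a crossing (consider $g(t)=-(t-t_1)^3$); however, the ``standard comparison lemma'' you invoke is precisely the paper's Lemma \ref{lem:ppemax}, whose Gronwall-style proof closes this gap --- apply it with $w=-z_1$, $z=\Omega$, $a=\modulo{a_0}e^{\int_\alpha^t a_1\d{s}}$, $b=-a_2e^{-\int_\alpha^t a_1\d{s}}$, which is legitimate because $\Omega'\geq \modulo{a_0}E\,\Omega^2+\modulo{a_2}E\geq a\Omega^2+b$ and $\Omega(\alpha)=0\geq -z_1(\alpha)$. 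Second, at the right endpoint of the window your estimate only yields $K_aI\leq 1$ rather than $K_aI<1$, so the bound should be read on the interior (or as a limit); the paper's proof shares this slight imprecision, so it is not a defect of your approach relative to the original.
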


\begin{remark}
The quantities $A_i$ and $B_i$, for $i\in \{0,1,2\}$,  are not depending directly on $r$ but only on $\norma{w_1}_{\L\infty}$ and $\norma{w_2}_{\L\infty}$.  In the case $\frac{\d{X_2}}{\d{t}}>0$ and $\alpha=0$, we obtain a lower bound for the time of existence of $\pt_r w_2$ 
\[
T_{ex}(\pt_r w_2) \geq \frac{x(B_0, B_1, B_2 )}{ B_1} R\,.
\]
\end{remark}

\begin{remark}\label{rem:rv1}
The same lemma can be applied to (\ref{eq:rv}) with 
\begin{align*}
\tilde a_0&=\frac{a_0}{r^\ell}\,, &\tilde a_1&=a_1+\frac{\ell \lambda_1}{r}\,, &\tilde a_2&=r^\ell a_2\,,\\
\tilde b_0&=\frac{b_0}{r^\ell}\,, &\tilde b_1&=b_1+\frac{\ell \lambda_2}{r}\,, &\tilde b_2&=r^\ell b_2\,.
\end{align*}
 The time of validity of the estimates remains similar, replacing  $A_1$ by  $\tilde A_1=\max_{[0,T]}(\bar a_1+\lambda_1)(t,X_1(t)))$ and  $B_1$ by $\tilde B_1=\max_{[0,T]}(\bar b_1+\lambda_2)(t,X_2(t)))$, with $A_0$, $B_0$, $A_2$, $B_2$ unchanged.
\end{remark}

\begin{proof}
First note that the upper bounds come directly from Lemma \ref{lem:up}.

Let us consider the equation on $v_2$. Without a lot of changes we can adapt the following to $v_1$.  Let us denote $y_2(t)=v_2(t, X_2(t))$ where  $v_2$ is defined in  in (\ref{def:v}). 
The equation on $v_2$ in (\ref{eq:v1v2}) writes, with  $y_2(t)=v_2(t,X_2(t))$,
\begin{align}
y_2'=b_0 y_2^2+b_1 y_2+b_2\,,\label{edo:y2}
\end{align}
where $b_0\leq 0$.

According to a Lemma  \ref{lem:hor2} (see also H\"ormander \cite{hormander}), if we have $y_2(\alpha)\geq 0$ and, for $\theta>\alpha$
\begin{align}
 \int_\alpha^\theta\modulo{b_2(t, X_2(t))}\d{t} \int_\alpha^\theta \modulo{b_0(t, X_2(t))}\d{t}\, \exp{ (2\int_\alpha^\theta \modulo{b_1(t, X_2(t))}\d{t})}&<1\,,\label{cond:hor}
\end{align}
then the equation (\ref{edo:y2})  with initial condition $ y_2(\alpha)= v_2(\alpha, X_2(\alpha))$ in $t=\alpha$ admits a solution on  $[\alpha, \theta]$ and, with $K_b$ as in (\ref{eq:Kb}), 
we have the estimate 
\begin{align*}
&\frac{- K_b(\theta )}{1- K_b (\theta)\int_\alpha^\theta \modulo{b_0(t, X_2(t))}\d{t} \exp{ (\int_\alpha^\theta \modulo{b_1(t, X_2(t))}}\d{t})}\\
\leq &\; y_2(\theta) e^{-\int_\alpha^\theta b_1(s, X_1(s))\d{s}} \leq v_2(\alpha, X_2(\alpha ))+ K_b(\theta) \,.
\end{align*}
Note, thanks to Lemma \ref{lem:compab}, that we have
\begin{align*}
 b_0(r,w)&=\bar b_0(w)\,,& b_1(r,w)&= \frac{\bar b_1(w)}{r}\,,&b_2(r,w)&=\frac{\bar b_2(w)}{r^2}\,.
\end{align*}
As we are not considering the 1-D case,  $b_0, b_2$ are not constantly zero, see the expression of $b_0, b_2$ in  Lemma \ref{lem:v1v2}. 

We have:
\begin{align*}
&\int_\alpha^\theta\modulo{b_2(t,X_2(t))}\d{t} \int_\alpha^\theta \modulo{b_0(t, X_2(t))}\d{t} \exp{ (2\int_\alpha^\theta \modulo{b_1(t, X_2(t))}\d{t})}\\
\leq& 
\frac{B_0 B_2}{ B_1^2} \, \left[\frac{B_1}{\min_{[\alpha,\theta]} (X_2(t))}(\theta-\alpha)\exp\big[{\frac{B_1}{\min_{[\alpha,\theta]} (X_2(t))}(\theta-\alpha)}\big]\right]^2\\
= &  \frac{B_0 B_2}{ B_1^2}  Q\left(\frac{B_1}{\min_{[\alpha,\theta]} (X_2(t))}(\theta-\alpha)\right)^2\,.
\end{align*}
Hence, the condition (\ref{cond:hor}) is satisfied for all $\theta\leq T$ such that $ Q\left(\frac{B_1}{\min_{[\alpha,\theta]} (X_2(t))}(\theta-\alpha)\right)< \frac{ B_1}{\sqrt{B_0 B_2}}  $. So, it is sufficient to ask that $\theta$ satisfies:
\[
\theta\leq  \alpha+\min_{t\in [\alpha,\theta]} (X_2(t))\,\frac{x(B_0,B_1,B_2)}{B_1}\,.
\]
\end{proof}

Let us now derive a lower bound on $v_1$ and $v_2$ in the case the initial condition is negative.
\begin{lemma}\label{lem:w'neg} 
Let $\alpha, R>0$. 
With the same notations as introduced in Lemma \ref{lem:w'pos}, we obtain:
\begin{enumerate}
\item If $v_1(\alpha, R)\leq 0$, then $v_1$ is well-defined on every interval $[\alpha, \theta]$ such that
\[
\theta\leq \alpha +\frac{\min_{t\in [\alpha, \theta]} X_1(t)}{A_1}Q^{-1}\left(\Theta_+\right)\,,
\]
where 
\begin{align}
\Theta_+ = \frac{A_1}{\sqrt{A_0 A_2}} \,\frac{1}{\left(1+\frac {\sqrt{A_0}} {\sqrt{A_2} } ( \modulo{v_1(\alpha, R)} \min_{t\in [\alpha, \theta]} X_1(t) )\right)} \,, \label{eq:Theta}
\end{align} 
and we have the estimate
\begin{eqnarray*}
\frac{-(\modulo{v_1(\alpha , R)} +K_a)}{1-(\modulo{v_1(\alpha, R)}+ K_a)\int_\alpha^\theta \modulo{a_0(t, X_1(t))}e^{\int_\alpha^t \modulo{a_1(u,X_1(u))} \d{u}}\d{t} }\\
\leq v_1(t, X_1(t))e^{\int_\alpha^t a_1(s, X_1(s))\d{s}}\leq K_a + v_1(\alpha, R)\,. 
\end{eqnarray*}
\item If $v_2(\alpha, R)\leq 0$, then $v_2$ is well-defined on every interval $[\alpha, \theta]$ such that
\[
\theta\leq \alpha +\frac{\min_{t\in [\alpha, \theta]} X_2(t)}{B_1}Q^{-1}\left(\Xi_+\right)\,,
\]
where
\begin{align}
\Xi_+ = \frac{B_1}{\sqrt{B_0 B_2}} \,\frac{1}{\left(1+\frac {\sqrt{B_0}} {\sqrt{B_2} } ( \modulo{v_2(\alpha)} \min_{t\in [\alpha, \theta]} X_2(t) )\right)} \,, \label{eq:Chi}
\end{align} 
and we have the estimate
\begin{eqnarray*}
&&\frac{-(\modulo{v_2(\alpha, R) }+K_b)}{1-(\modulo{v_2(\alpha, R)}+ K_b)\int_\alpha^\theta \modulo{b_0(t, X_2(t))}e^{\int_\alpha^t \modulo{b_1(u,X_2(u))} \d{u}}\d{t} }\\
&& \leq v_2(t, X_2(t)) e^{\int_\alpha^t b_1(s, X_2(s))\d{s}}
 \leq K_b + v_2(\alpha, R)\,. 
\end{eqnarray*}
\end{enumerate}
\end{lemma}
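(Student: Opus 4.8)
I will prove part (2), the estimate on $v_2$; part (1) is identical after exchanging the roles of the $1$- and $2$-families. Write $y_2(t)=v_2(t,X_2(t))$, so that $y_2$ solves the Riccati equation $y_2'=b_0y_2^2+b_1y_2+b_2$ with $b_0\leq 0$ and now $y_2(\alpha)=v_2(\alpha,R)\leq 0$. The upper bound $y_2(t)e^{\cdots}\leq K_b+v_2(\alpha,R)$ is immediate from Lemma \ref{lem:up}, since $b_0\leq 0$ gives $y_2'\leq b_1y_2+b_2$; the real content is the lower bound together with the guarantee that $y_2$ does not reach $-\infty$ before time $\theta$. The plan is to feed the equation into the H\"ormander-type Lemma \ref{lem:hor2}, now with a negative initial datum, and to convert the resulting smallness condition into the explicit time threshold through $Q(r)=re^r$.

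\textbf{The lower barrier.} Put $m=\modulo{v_2(\alpha,R)}$ and pass to $Y=y_2\,e^{-\int_\alpha^t b_1}$, which removes the linear term: $Y'=\tilde b_0 Y^2+\tilde b_2$ with $\tilde b_0=b_0e^{\int_\alpha^t b_1}\leq 0$, $\tilde b_2=b_2 e^{-\int_\alpha^t b_1}$ and $Y(\alpha)=-m$. For such an equation the natural lower barrier is $\omega(t)=N(t)/D(t)$ with $N=m+\int_\alpha^t\modulo{\tilde b_2}$ and $D=1-N\int_\alpha^t\modulo{\tilde b_0}$; a direct differentiation gives $\omega'=\modulo{\tilde b_0}\omega^2+\modulo{\tilde b_2}/D^2\geq \modulo{\tilde b_0}\omega^2+\modulo{\tilde b_2}$ since $D\leq 1$, and comparison then yields $Y\geq-\omega$ as long as $D>0$. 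This is the mechanism behind Lemma \ref{lem:hor2}: the initial amplitude $m$ enters additively with the source term. Bounding $e^{\pm\int b_1}\leq e^{\int\modulo{b_1}}$ and returning to $y_2$, one obtains both existence on $[\alpha,\theta]$ and the announced lower bound, valid precisely while
\[
\bigl(m+K_b\bigr)\int_\alpha^\theta\modulo{b_0}\,e^{\int_\alpha^t\modulo{b_1}\d{s}}\,\d{t}<1\,.
\]

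\textbf{From the smallness condition to the time threshold.} Using Lemma \ref{lem:compab} I bound $\modulo{b_0}\leq B_0$, $\modulo{b_1}\leq B_1/X_*$ and $\modulo{b_2}\leq B_2/X_*^2$ with $X_*=\min_{[\alpha,\theta]}X_2$, and introduce the dimensionless time $\tau=\frac{B_1}{X_*}(\theta-\alpha)$. Then $\int_\alpha^\theta\modulo{b_0}e^{\int\modulo{b_1}}\leq \frac{B_0X_*}{B_1}Q(\tau)$ and $K_b\leq\frac{B_2}{B_1X_*}Q(\tau)$, so the condition above is implied by
\[
\Bigl(m+\tfrac{B_2}{B_1X_*}Q(\tau)\Bigr)\tfrac{B_0X_*}{B_1}Q(\tau)<1\,.
\]
Writing $\beta=\tfrac{\sqrt{B_0}}{\sqrt{B_2}}\,m\,X_*$, the left-hand side evaluated at $Q(\tau)=\Xi_+$ equals $\dfrac{\beta^2+\beta+1}{(1+\beta)^2}$, which is $\leq 1$ exactly because $\beta\geq 0$. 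By monotonicity of $Q$ the condition therefore holds whenever $Q(\tau)\leq\Xi_+$, that is $\theta\leq\alpha+\frac{X_*}{B_1}Q^{-1}(\Xi_+)$, which is the claimed interval; taking $m=0$ gives $\beta=0$ and recovers Lemma \ref{lem:w'pos}.

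\textbf{Main obstacle.} The delicate point is the calibration of the threshold rather than the comparison itself: one must recognise that $\Xi_+$ is precisely the value saturating the quadratic-in-$Q(\tau)$ smallness condition, which reduces to the elementary inequality $\beta^2+\beta+1\leq(1+\beta)^2$. A secondary care point is to verify that the barrier stays valid on the whole of $[\alpha,\theta]$ (denominator $D$ positive, no crossing of $Y$ and $-\omega$), so that the comparison genuinely rules out blow-up to $-\infty$ before $\theta$ and not merely on a shorter a priori maximal interval.
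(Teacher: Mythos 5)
Your proposal is correct and takes essentially the same route as the paper: the paper proves part (1) by applying the H\"ormander-type Lemma \ref{lem:hor2} to the sign-flipped variable $z_1=-v_1$ (so the negative initial datum enters hypothesis (\ref{hyp:1}) through $y_0$), bounds the coefficients via Lemma \ref{lem:compab}, and reduces the smallness condition to the same quadratic inequality in $Q$, showing $\Theta_+$ minorizes its positive root $\xi_+$. Your explicit barrier $N/D$ is a re-derivation of exactly the mechanism of Lemma \ref{lem:hor2}, and your check that $\beta^2+\beta+1\leq(1+\beta)^2$ at $Q(\tau)=\Xi_+$ is an equivalent, slightly more direct calibration than the paper's computation of $\xi_+$ via the discriminant.
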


\begin{proof}
First note that the upper bounds come directly from Lemma \ref{lem:up}.

Let us consider $v_1$. The same computations apply to $v_2$ after small changes. Let us denote $z_1(t)=-v_1(t,X_1(t))$. Then, by hypothesis $z_1(\alpha)\geq 0$. According to Lemma \ref{lem:v1v2}, $z_1$ satisfies the ODE:
\[
z_1'=-a_0 z_1^2+a_1 z_1 -a_2\,,
\]
where $-a_0\geq 0$ (see Lemma \ref{lem:compab}).
Let us introduce $K_a$ as in (\ref{eq:Ka}).
To apply Hörmander Lemma (see Lemma \ref{lem:hor2}), conditions (\ref{hyp:1}) and (\ref{hyp:2}) have to be satisfied. 
Note that, since $-a_0\geq 0$, condition (\ref{hyp:1}) implies condition (\ref{hyp:2}) so it is sufficient to see what is a sufficient condition allowing (\ref{hyp:1}) to be satisfied. Condition (\ref{hyp:1}) is equivalent to 
\[
\left(z_1(\alpha) +K_a\right) \int_\alpha^\theta \modulo{a_0(t, X_1(t))} \d{t} e^{\int_\alpha^\theta \modulo{a_1(t, X_1(t))} \d{t}} -1 <0\,.
\]
It is sufficient to have
\begin{align*}
z_1(\alpha) A_0(\theta-\alpha) e^{\frac{A_1}{\min_t X_1(t)}(\theta-\alpha)}+\frac{A_0 A_2}{\min_t X_1(t)^2}(\theta-\alpha)^2 e^{\frac{2A_1}{\min_t X_1(t)}(\theta-\alpha)} -1<0\,.
\end{align*}
Let us denote $\Theta=\frac{A_1}{\min_{t\in [\alpha, \theta]} X_1(t)}(\theta-\alpha)e^{\frac{A_1}{\min_{t\in [\alpha, \theta]} X_1(t)}(\theta-\alpha)}=Q(\frac{A_1}{\min_{t\in [\alpha, \theta]} X_1(t)}(\theta-\alpha))$, where $Q$ is defined as in (\ref{eq:phi}). Then to satisfy condition (\ref{hyp:1}) it is sufficient to have
\begin{align*}
z_1(\alpha)\frac{A_0}{A_1} \min_{t\in [\alpha, \theta]} \{ X_1(t)\} \Theta+\frac{A_0 A_2}{A_1^2} \Theta^2-1<0\,.
\end{align*}
Let $\Delta=\left(z_1(\alpha)\frac{A_0}{A_1} \min_t \{ X_1(t)\}\right)^2+ 4 \frac{A_0 A_2}{A_1^2}$ be the discriminant of this equation in $\Theta$. This equation admits two distinct roots, one positive and one negative.  Let us denote $\xi_+$ the positive root. Then we have
\begin{align*}
\xi_+ &= \frac{A_1 z_1(\alpha) \min_{t\in [\alpha, \theta]} X_1(t) }{2A_2}\left(-1+\sqrt{1+\frac{4 A_2}{ z_1(\alpha)^2 A_0 \min_{t\in [\alpha, \theta]} X_1(t)^2}}\right)\\
&= \frac{A_1}{\sqrt{A_0 A_2}} \frac{1} { \sqrt{1+\frac{A_0}{4A_2} ( z_1(\alpha) \min_{t\in [\alpha, \theta]} X_1(t) )^2} +\frac {\sqrt{A_0}} {2\sqrt{A_2} } ( z_1(\alpha) \min_{t\in [\alpha, \theta]} X_1(t) ) } \\
&\geq   \frac{A_1}{\sqrt{A_0 A_2}} \frac{1}{1+\frac {\sqrt{A_0}} {\sqrt{A_2} } ( z_1(\alpha) \min_{t\in [\alpha, \theta]} X_1(t) )} =: \Theta_+\,.
\end{align*} 
Then (\ref{hyp:1}) is satisfied if $0\leq \Theta \leq \Theta_+$.
Hence, the application $Q$ defined in (\ref{eq:phi}) being strictly increasing on $\reali_+$, it is sufficient to have
\[
\theta\leq \alpha +\frac{\min_{t\in [\alpha, \theta]} X_1(t)}{A_1}Q^{-1}\left(\Theta_+\right)\,.
\]
\end{proof}

\section{Construction of a shock wave}\label{sec:constr}
\subsection{Rankine-Hugoniot conditions}\label{sectionRH}
The Rankine-Hugoniot conditions (cf. \cite[p. 312]{Benz}) appear when we consider weak and piecewise smooth solution for first order systems. For the Euler equations (\ref{eq:isentropique}), these conditions are written:
\[ \left\{
\begin{array}{l}
-{U}[\rho]+[\rho ({u}\cdot\nu)]=0\,,\\
-{U}[\rho {u}] + [\rho ({u}\cdot\nu)u+ p\nu]=0\,,
\end{array} \right.
\]
through a discontinuity with normal vector $\nu$  and with normal speed $U$.
The usual notation $[\cdot]$ stands for the jump between the two limit values at the both sides of the discontinuity. We denote $u^+$ the limit of $u$ from the right and $u^-$ the limit of $u$ from the left.
In the spherical case, these conditions become
\begin{equation}\label{RH}
 \left\{\begin{array}{l}
-{U}[\rho]+[\rho {u}]=0\,,\\
-{U}[\rho {u}] + [\rho u^2+ p]=0\,.
 \end{array}\right.
\end{equation}
A weak solution with a discontinuity is called \emph{shock} when $U$ differs from the speed of the fluid on the both sides of the discontinuity. Note that the first condition of (\ref{RH}) gives us $\rho^+(U-u^+) =\rho^-(U-u^-)$ so that $U-u^+$ and $U-u^-$ have the same sign. If we assume $U-u^\pm\geq 0$ then the shock moves from the left to the right.

Let $W^\pm = U - u^\pm$. With some classical computations, we get that (\ref{RH}) is equivalent to 
\begin{align}
j:=\rho^+(U-u^+) =\rho^-(U-u^-)\,,\qquad [p+\rho W^2]=0\,.\label{eq:RH}
\end{align}
Hence, we get 
\begin{align*}
u^+-u^-=j\left(\frac{1}{\rho^-}-\frac{1}{\rho^+}\right)\,,\qquad p^+-p^-=j^2\left(\frac{1}{\rho^-}-\frac{1}{\rho^+}\right)\,.
\end{align*}
Finally, we obtain $(u^+-u^-)^2=(p^+-p^-)\left(\frac{1}{\rho^-}-\frac{1}{\rho^+}\right)\geq 0$.
Let us recall furthermore the Lax entropy conditions for a 2-shock (see \cite{Sr1}):
\begin{align}
\rho^- > \rho^+\,,& \qquad\qquad\qquad p^- >p^+\,,& u^-&> u^+\,,\label{rholeqrho+}\\
\lambda_1(w^-)&<U(w^+,w^-)<\lambda_2( w^-)\,,& U(w^+)&\geq \lambda_2(w^+)\,.\label{eq:U}
\end{align}

Finally, the jump conditions at the shock are 
\begin{align}
 U&=\frac{\rho^+u^+-\rho^-u^-}{\rho^+-\rho^-}\,,&u^--u^+&=\sqrt{(p^+-p^-)\left(\frac{1}{\rho^-}-\frac{1}{\rho^+}\right)}\,,& \rho^- &>\rho^+\,.\label{eq:cond_saut}
\end{align}

%
\begin{proposition}
For a Bethe-Weyl gas, the  Rankine-Hugoniot  and Lax shock conditions can be reduced to (\ref{eq:cond_saut}).
\end{proposition}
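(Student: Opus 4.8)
The plan is to prove that the full Rankine--Hugoniot system (\ref{RH}) together with the Lax inequalities (\ref{rholeqrho+})--(\ref{eq:U}) cuts out exactly the same admissible states $(\rho^\pm,u^\pm,U)$ as the reduced system (\ref{eq:cond_saut}). Since we work in the isentropic regime, $p^\pm=p(\rho^\pm)$ are fixed by $\rho^\pm$ through the isentrope, so the genuine unknowns are only $\rho^+,\rho^-,u^+,u^-$ and $U$. Much of the forward implication is already carried out above: the first line of (\ref{RH}) rearranges to $U=\frac{\rho^+u^+-\rho^-u^-}{\rho^+-\rho^-}$, which is the first relation of (\ref{eq:cond_saut}), and eliminating $j$ in (\ref{eq:RH}) gives $(u^--u^+)^2=(p^+-p^-)(\frac{1}{\rho^-}-\frac{1}{\rho^+})$. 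The Lax conditions (\ref{rholeqrho+}) then supply both $\rho^->\rho^+$ (the third relation of (\ref{eq:cond_saut})) and $u^->u^+$, the latter selecting the positive square root and yielding the second relation of (\ref{eq:cond_saut}).

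For the converse I would start from (\ref{eq:cond_saut}) and recover first (\ref{RH}) and then the remaining Lax inequalities. The first relation of (\ref{eq:cond_saut}) is literally the first line of (\ref{RH}); squaring the second relation and using the first reproduces $(u^--u^+)^2=(p^+-p^-)(\frac{1}{\rho^-}-\frac{1}{\rho^+})$, which together with the $U$-formula is equivalent to the second line of (\ref{RH}). Introducing the mass flux $j:=\rho^+(U-u^+)$, the identity $p^+-p^-=j^2(\frac{1}{\rho^-}-\frac{1}{\rho^+})$ forces $j^2=\frac{p^--p^+}{1/\rho^+-1/\rho^-}$; because $\gamma>0$ in (\ref{eq:BW}) makes $p$ strictly increasing in $\rho$, the hypothesis $\rho^->\rho^+$ gives $p^->p^+$ (the pressure ordering in (\ref{rholeqrho+})) and $j^2>0$, and fixing $j>0$ yields $U-u^\pm=j/\rho^\pm>0$, i.e. a right-moving shock with $u^->u^+$.

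It remains to derive the characteristic inequalities (\ref{eq:U}) from $\rho^->\rho^+$, and this is the substantive step in which the Bethe--Weyl structure enters. Writing $v=1/\rho$ and $p=P(v)$ along the isentrope, one has $-P'(v)=(\rho c)^2$ and, from $\mathscr{G}=\frac{1}{c}\left.\frac{\pt(\rho c)}{\pt\rho}\right|_s$, the identity $P''(v)=2\rho^3c^2\mathscr{G}$, so $\mathscr{G}>0$ is precisely strict convexity of the isentrope in the $(v,p)$ plane. Now $j^2=-\frac{P(v^-)-P(v^+)}{v^--v^+}$ is minus the slope of the chord joining $(v^+,p^+)$ and $(v^-,p^-)$ with $v^-<v^+$, while $(\rho^\pm c^\pm)^2=-P'(v^\pm)$ are minus the endpoint tangent slopes. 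Strict convexity places the chord slope strictly between the two tangent slopes, which translates into $(\rho^+c^+)^2<j^2<(\rho^-c^-)^2$, i.e. $c^+<U-u^+$ and $U-u^-<c^-$; these are exactly $U>\lambda_2(w^+)$ and $U<\lambda_2(w^-)$. The last inequality $\lambda_1(w^-)<U$ is immediate, since $U-u^-=j/\rho^->0>-c^-$.

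The step I expect to be the main obstacle is this last paragraph: converting the single scalar ordering $\rho^->\rho^+$ into the full chain of Lax characteristic inequalities. The two equalities and the density, pressure and velocity orderings are essentially bookkeeping, whereas the characteristic inequalities are where genuine nonlinearity is needed, and one must check carefully that the Bethe--Weyl hypotheses --- specifically $\mathscr{G}>0$ (strict convexity of the isentrope) together with $\gamma>0$ (monotonicity of $p$) --- are exactly what make the chord-versus-tangent comparison strict. A secondary point to verify is that the chosen sign conventions ($j>0$, right-moving shock) are consistent with the orientation $U-u^\pm\geq 0$ fixed after (\ref{RH}), so that no spurious branch of the square root is admitted.
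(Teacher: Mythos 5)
Your proof is correct, and its mathematical core coincides with the paper's: both reduce the Lax characteristic inequalities (\ref{eq:U}) to the comparison $(\rho^+c^+)^2< j^2<(\rho^-c^-)^2$, and both make that comparison strict using $\mathscr{G}>0$. The difference is in packaging and in scope. The paper's proof treats only the implication (\ref{eq:cond_saut}) $\Rightarrow$ (\ref{rholeqrho+})--(\ref{eq:U}) (the opposite implication being the derivation that precedes the proposition), and it obtains the key inequality by a self-contained calculus argument: it factors $U-u^--c^-$ explicitly and shows that $\xi(\rho,\rho^+)=p-p^+-(\rho c)^2\left(\frac{1}{\rho^+}-\frac{1}{\rho}\right)$ is nonincreasing in $\rho$, since $\pt_\rho\xi=-2\rho c^2\mathscr{G}\left(\frac{1}{\rho^+}-\frac{1}{\rho}\right)\leq 0$; integrating on $[\rho^+,\rho^-]$ gives $U\leq \lambda_2^-$, and a symmetric computation gives $U\geq \lambda_2^+$. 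You instead pass to the $(v,p)$-plane, note $P'(v)=-(\rho c)^2$ and $P''(v)=2\rho^3c^2\mathscr{G}$ (both identities are correct), and invoke the chord--tangent inequality for strictly convex functions. These are two renderings of the same fact: the paper's statement $\xi(\rho^-,\rho^+)\leq 0$ \emph{is} the assertion that the chord slope is bounded by the tangent slope at $v^-$, and $\pt_\rho\xi\leq 0$ is the same computation as $P''\geq 0$. Your version is more geometric and matches the paper's own remark in Section 2 that $\mathscr{G}>0$ means strict convexity of the isentropes; it also delivers the strict inequalities cleanly, isolates which Bethe-Weyl hypothesis does what ($\gamma>0$ for the orderings, $\mathscr{G}>0$ for the characteristic inequalities), and handles both directions of the equivalence, whereas the paper's argument is self-contained and does not presuppose the convexity lemma.

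One small point of rigor: you write ``fixing $j>0$'' as if the sign were a convention, but under (\ref{eq:cond_saut}) it is forced, since the first relation and $u^->u^+$ give $U-u^+=\rho^-(u^--u^+)/(\rho^--\rho^+)>0$, hence $j=\rho^+(U-u^+)>0$ (the paper gets the same thing from $U-u^-=\rho^+\sqrt{F(\rho^-,\rho^+)}/(\rho^--\rho^+)\geq 0$ with $F$ as in (\ref{eq:F})). This is a one-line fix, not a gap.
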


\begin{proof}
By (\ref{eq:cond_saut}), we have immediatly $u^-> u^+$ and $p^->p^+$ since $\frac{\pt p}{\pt \rho}=c^2\geq 0$. Hence (\ref{rholeqrho+}) is satisfied.

Let us prove (\ref{eq:U}). First, note that
\[
U-u^-=\frac{\rho^+ \sqrt{F(\rho^-, \rho^+)}}{\rho^--\rho^+}\,,
\]
where $F$ is defined as in (\ref{eq:F}). Hence $U-u^-\geq 0\geq - c^-$ and $U\geq u^--c^-=\lambda_1^-$. 

Let us compute now $U-u^--c^-$:
\begin{align*}
U-u^--c^- &= \frac{\rho^+}{\rho^--\rho^+}\sqrt{\rho^-c^-\left(\frac{1}{\rho^+}-\frac{1}{\rho^-}\right)}
\left(\sqrt{\frac{1}{\rho^-c^-}\left(p^- -p^+\right)}-\sqrt{\rho^-c^-\left(\frac{1}{\rho^+}-\frac{1}{\rho^-}\right)}\right)\\
&=\frac{\rho^+}{\rho^--\rho^+}\sqrt{\rho^-c^-\left(\frac{1}{\rho^+}-\frac{1}{\rho^-}\right)}\frac{
 \frac{1}{\rho^-c^-}\left( p^- -p^+ -(\rho^-c^-)^2 \left(\frac{1}{\rho^+}-\frac{1}{\rho^-}\right)\right)
}{\left(\sqrt{\frac{1}{\rho^-c^-}\left(p^- -p^+\right)}+\sqrt{\rho^-c^-\left(\frac{1}{\rho^+}-\frac{1}{\rho^-}\right)}\right)}
\end{align*}
Let us denote $\xi(\rho^-, \rho^+)=p^- -p^+ -(\rho^-c^-)^2 \left(\frac{1}{\rho^+}-\frac{1}{\rho^-}\right)$. We have, for $\rho\geq \rho^+$
\[
\frac{\pt \xi(\rho, \rho^+)}{\pt{\rho}} =-2\rho c^2\mathscr{G}\left(\frac{1}{\rho^+}- \frac{1}{\rho}\right)\leq 0\,.
\]
Integrating on $[\rho^+, \rho^-]$, we obtain $\xi(\rho^-, \rho^+)\leq \xi(\rho^+, \rho^+)=0$.
Finally, we have $U-u^--c^-\leq 0$. 

A similar computation prove that $U-u^+-c^+\geq 0$.
\end{proof}

Let us invert the second of the jump conditions  (\ref{eq:cond_saut}), in order to express it as a condition on $w_1^-$, depending on $w_2^-, w_1^+, w_2^+$.
\begin{lemma}\label{lem:g} 
We assume that there exists $(w_{1,0}^-, w_{2,0}^-)$ and $(w_{1,0}^+, w_{2,0}^+)$ such that $\mathcal{F}(w_0^-, w_0^+)=0$. Then, there exists $g \in \C1(\reali^3, \reali)$ such that the second condition of (\ref{eq:cond_saut}) is equivalent to 
the compatibility condition 
\begin{equation}
w_1^-=g(w_2^- , w_1^+, w_2^+) \,,\label{eq:cond:bord}
\end{equation}
as long as the condition $\rho^->\rho^+$ is satisfied. That is to say, as long as $g(w_2^-, w^+)< w_2^--w_2^++w_1^+$.
\end{lemma}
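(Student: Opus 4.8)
The plan is to read the second jump condition of (\ref{eq:cond_saut}) as the vanishing of a scalar function $\mathcal{F}$ of the four Riemann invariants $(w_1^-, w_2^-, w_1^+, w_2^+)$ and then to solve it for $w_1^-$ by the implicit function theorem. First I would pass from $(\rho,u)$ to $(w_1,w_2)$ on each side: since $u=(w_1+w_2)/2$ and $H(\rho)=(w_2-w_1)/2$ with $H'=c/\rho>0$ (the sound speed being positive for a Bethe-Weyl gas), the map $H$ is a smooth strictly increasing bijection, so $\rho^\pm$, and hence $p^\pm=p(\rho^\pm)$, are recovered as smooth functions of $(w_2^\pm-w_1^\pm)/2$. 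Recalling that $(p^+-p^-)(\frac{1}{\rho^-}-\frac{1}{\rho^+})=F(\rho^-,\rho^+)$ with $F$ as in (\ref{eq:F}), I set
\[
\mathcal{F}(w^-,w^+)=(u^--u^+)-\sqrt{F(\rho^-,\rho^+)}\,,
\]
so that the second condition of (\ref{eq:cond_saut}) is exactly $\mathcal{F}=0$. Near the reference point, where the genuine 2-shock forces $\rho_0^->\rho_0^+$ and hence $F>0$, the square root is smooth and $\mathcal{F}$ is $\C1$.

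The key step is the sign of $\pt\mathcal{F}/\pt w_1^-$ at the reference point. From $u^-=(w_1^-+w_2^-)/2$ one has $\pt u^-/\pt w_1^-=1/2$, and differentiating $H(\rho^-)=(w_2^--w_1^-)/2$ gives $\pt\rho^-/\pt w_1^-=-\rho^-/(2c^-)<0$. Hence
\[
\frac{\pt\mathcal{F}}{\pt w_1^-}=\frac12+\frac{\rho^-}{2c^-}\frac{\d{}}{\d{\rho^-}}\sqrt{F(\rho^-,\rho^+)}\,.
\]
The monotonicity computed in the proof of Lemma \ref{lem:cH2} yields $\frac{\d{}}{\d{\rho^-}}\sqrt{F}\geq H'(\rho^-)=c^-/\rho^->0$, whence $\pt\mathcal{F}/\pt w_1^-\geq 1>0$. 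The implicit function theorem then produces a function $g\in\C1(\reali^3,\reali)$ with $w_1^-=g(w_2^-,w_1^+,w_2^+)$ solving $\mathcal{F}=0$ on a neighbourhood of the reference point.

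Finally I would translate the remaining constraint $\rho^->\rho^+$. Since $H$ is strictly increasing, $\rho^->\rho^+$ is equivalent to $H(\rho^-)>H(\rho^+)$, i.e. to $(w_2^--w_1^-)/2>(w_2^+-w_1^+)/2$, that is $w_1^-<w_2^--w_2^++w_1^+$; substituting $w_1^-=g(w_2^-,w_1^+,w_2^+)$ yields the stated range $g(w_2^-,w^+)<w_2^--w_2^++w_1^+$. The only delicate point is ensuring $F>0$ at the reference configuration, so that $\sqrt{F}$ — and therefore $\mathcal{F}$ — is genuinely differentiable there; this is precisely what the hypothesis that $(w_0^-,w_0^+)$ comes from a nontrivial 2-shock (with $\rho_0^->\rho_0^+$, forced by the Lax conditions (\ref{rholeqrho+})) provides, and once it holds the sign computation above makes the inversion automatic.
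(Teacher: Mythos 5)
Your proof is correct and follows essentially the same route as the paper: write the second jump condition as $\mathcal{F}(w^-,w^+)=u^--u^+-\sqrt{F(\rho^-,\rho^+)}=0$, prove $\pt \mathcal{F}/\pt w_1^->0$, apply the implicit function theorem, and translate $\rho^->\rho^+$ via the monotonicity of $H$. The only cosmetic difference is in how positivity of the derivative is shown: the paper writes $\pt_{w_1^-}\mathcal{F}$ as a perfect square over $4\sqrt{F}$, whereas you obtain the equivalent (indeed, after expanding that square, identical) bound $\pt_{w_1^-}\mathcal{F}\geq 1$ by reusing the inequality $\frac{\d{}}{\d{\rho}}\sqrt{F}\geq H'$ from the proof of Lemma \ref{lem:cH2}.
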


\begin{proof}
We can write (\ref{eq:cond_saut}) as the following:
\begin{equation}\label{def:F}
 \mathcal{F}(w^-,w^+):= u^- -u^+ -\sqrt{F(\rho^-,\rho^+)}=0 \,,
\end{equation}
with $F(\rho,\rho^+)=(\frac{1}{\rho}-\frac{1}{ \rho^+})( p(\rho^+) -p(\rho))$ as in (\ref{eq:F}).  Since by hypothesis $\rho^+<\rho^-$, we have
\begin{align}
\frac{\pt \mathcal{F}}{\pt w_1^-}
&= \frac{1}{4\sqrt{F}}\left(\sqrt{\rho^- c^- (\frac{1}{\rho^+}-\frac{1}{\rho^-} ) } +\sqrt{\frac{1}{\rho^- c^-} (p^- -p^+)} \right)^2 \nonumber\\
&> 0\,. \label{Fpos}
\end{align}
According to the hypotheses $\mathcal{F}(w^-(0,R_0),w^+(0,R_0))=0$, that is to say that the jump condition  given by the second equation of   (\ref{eq:cond_saut}) is satisfied at time $t=0$. By the implicit function Theorem, there exist (locally) a unique function $g(w_2^-, w^+)$ such that 
$$\mathcal{F}( g(w_2^-, w^+),w_2^-, w^+)=0\,.$$

Furthermore
\begin{align*}
\frac{\pt \mathcal{F}}{\pt w_2^-}
&= \frac{-1}{4\sqrt{F}}\left(\sqrt{\rho^- c^- (\frac{1}{\rho^+}-\frac{1}{\rho^-} ) } -\sqrt{\frac{1}{\rho^- c^-} (p^- -p^+)} \right)^2 \leq 0\,, \\
\frac{\pt \mathcal{F}}{\pt w_1^+}
&= \frac{-1}{4\sqrt{F}}\left(\sqrt{\rho^+ c^+ (\frac{1}{\rho^+}-\frac{1}{\rho^-} ) } +\sqrt{\frac{1}{\rho^+ c^+} (p^- -p^+)} \right)^2\leq 0\,, \\
\frac{\pt \mathcal{F}}{\pt w_2^+}
&= \frac{1}{4\sqrt{F}}\left(\sqrt{\rho^+ c^+ (\frac{1}{\rho^+}-\frac{1}{\rho^-} ) } -\sqrt{\frac{1}{\rho^+ c^+} (p^- -p^+)} \right)^2 \geq 0\,, 
\end{align*}
and we can see that the sign of the partial derivatives remain constant. 
As $(\frac{\pt g}{\pt w_2^-},\frac{\pt g}{\pt w_1^+}, \frac{\pt g}{\pt w_2^+} )=\frac{-1}{\pt_{w_1^-}\mathcal{F}}( \pt_{w_2^-}\mathcal{F}, \pt_{1^+}\mathcal{F}, \pt_{2^+} \mathcal{F})$ is well defined as long as $\rho^->\rho^+$.
Hence, the implicit function is defined as long as the condition $\rho^->\rho^+$ is satisfied.
\end{proof}

\begin{remark}\label{rem:condDens}
Note that the condition $\rho^->\rho^+$ along the discontinuity line $\mathcal{K}$ is equivalent to $w_2^->w_2^+$ on $\mathcal{K}$. Indeed, as $H$ is a strictly increasing function of $\rho$, $\rho^->\rho^+$ is equivalent to $H^->H^+$, that is to say $w_2^--w_1^->w_2^+-w_1^+$. On the discontinuity line $\mathcal{K}$, $w_1^-=g(w_2^-,w^+)$. Hence, $\rho^->\rho^+$ is equivalent to $g(w_2^-,w_1^+,w_2^+)<w_2^--w_2^++w_1^+$. 

Let us denote $h(w_2^-,w_1^+, w_2^+)=w_2^--w_2^++w_1^+$. As $\pt_{w_1^-}\mathcal F>0$, then $g(w_2^-, w^+)<h(w_2^-, w^+)$ is equivalent to $\mathcal{F}(g(w_2^-, w^+), w_2^-, w^+)<\mathcal{F}(h(w_2^-, w^+), w_2^-, w^+)$. 
We note that 
\[
u(h, w_2^-)=\frac{1}{2}(h+w_2^-)=\frac{1}{2}(2w_2^- -(w_2^+-w_1^+))=w_2^- -H(w_1^+, w_2^+)
\] 
and 
\[
\rho(h,w_2^-)=H^{-1}\left(\frac{1}{2}(w_2^--h)\right)=H^{-1}\left(\frac{1}{2}(w_2^+-w_1^+)\right)=\rho(w_1^+, w_2^+)\,.
\] 
Hence we obtain that $\mathcal{F}(g, w_2^-, w^+)<\mathcal{F}(h,w_2^-,w^+)$ is equivalent to 
\[
0< (w_2^--H^+)-u^+ =w_2^--(u^++H^+)=w_2^--w_2^+\,.
\]
\end{remark}

\subsection{Angular domain}
We want now to construct a solution by solving three problems : two classical problems with initial conditions obtained by prolongating the initial conditions on the right and on the left of $R_0$; and an angular problem with boundary conditions given in (\ref{eq:cond:bord})--(\ref{eq:x2}). 
According to  T. T.  Li \& W. C. Yu \cite[Chap. 3]{liYu}, this last problem admits a local in time solution. In order to obtain an estimate  on the time of existence we make a priori estimates on the solution.

Let us denote $D_-$ the domain in the $(r,t)$-plan which is bounded on the right by the curve $\mathcal{C}_1$ defined in (\ref{eq:x2}) and by the lines of equations $t=0$, $t=T_*$ (see Figure \ref{fig:angulaire}). In the same way, we denote $D_+$ the domain in the $(r,t)$-plan which is bounded on the left by the curve $\mathcal{K}$ defined in (\ref{eq:x2}) and by the lines of equations $t=0$, $t=T_*$. The domain $D_0$ is the domain in the $(r,t)$-plan which is bounded on the left by the curve $\mathcal{C}_1$ defined in (\ref{eq:x2}) and on the right by the curve $\mathcal{K}$ defined in (\ref{eq:x2}).

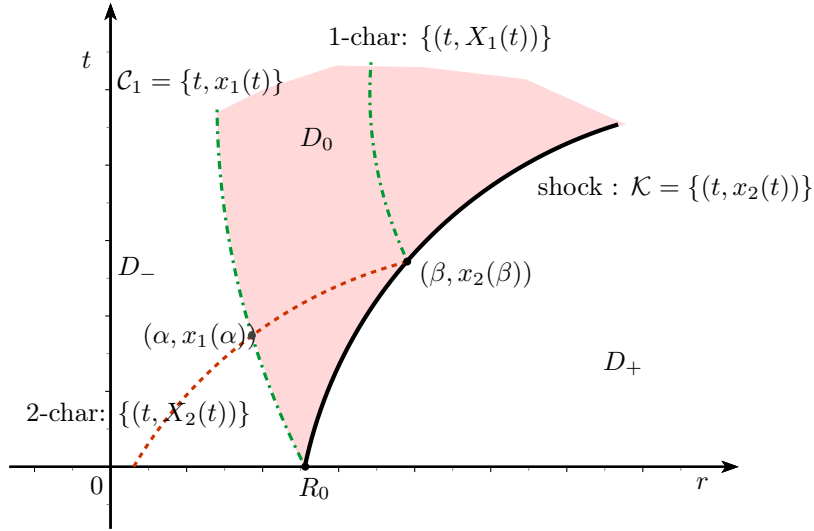
\begin{figure}[ht] 
\begin{center} 
\newrgbcolor{ccttqq}{0.8 0.2 0}
\newrgbcolor{qqzztt}{0 0.6 0.2}
\newrgbcolor{ffwwww}{1 0.4 0.4}
\psset{xunit=1.0cm,yunit=1.0cm,algebraic=true,dotstyle=*,dotsize=3pt 0,linewidth=0.8pt,arrowsize=3pt 2,arrowinset=0.25}
\begin{pspicture*}(-1.33,-0.82)(9.3,6.15)
\psaxes[labelFontSize=\scriptstyle,xAxis=true,yAxis=true,labels=none,Dx=1,Dy=1,ticksize=-2pt 0,subticks=2]{->}(0,0)(-1.33,-0.82)(8.26,6.15)
\pspolygon[linestyle=none,fillstyle=solid,fillcolor=ffwwww,opacity=0.25](2.23,0.71)(1.8,1.94)(1.54,3.12)(1.41,4.38)(1.4,4.7)(2,5)(2.96,5.32)(4.1,5.3)(5.46,5.14)(6.78,4.54)(6.44,4.46)(5.8,4.19)(5.32,3.92)(4.68,3.48)(4.33,3.17)(3.78,2.58)(3.39,2.03)(2.98,1.28)(2.71,0.6)(2.54,0)
\parametricplot[linewidth=1.6pt]{1.8712625613454594}{2.938048166068375}{1*6.03*cos(t)+0*6.03*sin(t)+8.46|0*6.03*cos(t)+1*6.03*sin(t)+-1.22}
\parametricplot[linewidth=1.2pt,linestyle=dashed,dash=2pt 2pt,linecolor=ccttqq]{1.8257925498382732}{2.6098944901921666}{1*5.9*cos(t)+0*5.9*sin(t)+5.39|0*5.9*cos(t)+1*5.9*sin(t)+-2.99}
\parametricplot[linewidth=1.2pt,linestyle=dashed,dash=1pt 2pt 3pt 2pt ,linecolor=qqzztt]{3.151262932604429}{3.6119394248259447}{1*10.69*cos(t)+0*10.69*sin(t)+12.09|0*10.69*cos(t)+1*10.69*sin(t)+4.84}
\parametricplot[linewidth=1.2pt,linestyle=dashed,dash=1pt 2pt 3pt 2pt ,linecolor=qqzztt]{3.0626184080623915}{3.580273898702151}{1*5.24*cos(t)+0*5.24*sin(t)+8.65|0*5.24*cos(t)+1*5.24*sin(t)+4.95}
 \rput[tl](5.62,3.87){{\small shock : $\mathcal{K}=\{(t,x_2(t))\}$}}
\rput[tl](7.7,-0.14){{\small $r$}}
\rput[tl](-0.38,5.51){{\small $t$}}
\rput[tl](-0.27,-0.14){{\small 0}}
\rput[tl](0.09,5.27){{\small $\mathcal{C}_1=\{t,x_1(t)\}$}}
\rput[tl](2.49,4.49){{\small $D_0$}}
\rput[tl](0.09,2.78){{\small $D_-$}}
\rput[tl](6.48,1.53){{\small $D_+$}}
\rput[tl](2.86,5.86){{\small 1-char: $\{(t,X_1(t))\}$}}
\rput[tl](-1.11,0.87){{\small 2-char: $\{(t,X_2(t))\}$}}
\rput[tl](4.06,2.73){{\small $(\beta, x_2(\beta))$}}
\rput[tl](0.42,1.9){{\small $(\alpha,x_1(\alpha))$}}
\rput[tl](2.47,-0.14){{\small $R_0$}}
\psdots(2.56,0)
\psdots(3.9,2.72)
\psdots[linecolor=darkgray](1.86,1.74)
\end{pspicture*}
\caption{Angular Domain and some related curves.}\label{fig:angulaire}
\end{center}
\end{figure}

More precisely,  let $R_0>0$. Let us define the free boundary domain $D_0$ (see Figure \ref{fig:angulaire}) where the boundaries $\mathcal{C}_1=\{(t,x_1(t))\,,\; t\geq 0\}$ and $\mathcal{K}=\{(t,x_2(t))\,,\; t\geq 0\}$ are respectively the curves defined by 
\begin{align}
\frac{\d{x_1}}{\d{t}}&=\lambda_1(w(t,x_1(t))),&x_1(0)&=R_0\,,&
\frac{\d{x_2}}{\d{t}}&=U(t,w(t,x_2(t))),& x_2(0)&=R_0\,.\label{eq:x2}
\end{align}
where $U$ is the speed of a 2-shock linking $w^-(t,x_2(t))$ to $w^+(t,x_2(t))$ and is defined by the first equation of (\ref{eq:cond_saut}).

So that the angular problem is well-posed, we need to add some boundary conditions on $\mathcal{C}_1$ and $\mathcal{K}$.
Defining $g$ thanks to Lemma \ref{lem:g},  we have the following boundary conditions:
\begin{align}
w_2(t,x_1(t))&=\omega(t)\,,& w_1(t,x_2(t))=g(t,x_2(t),w_2(t,x_2(t)))\,, 
\end{align}
where $\omega$ is chosen so that $w_2$ is $\C1$ through $\mathcal{C}_1$. The definition of $g$ ensures that the compatibility condition given by the second equation of (\ref{eq:cond_saut}) is satisfied along the discontinuity line $\mathcal{K}$.

\begin{lemma}\label{lem:derivee}
Let us assume that, along the shock $\mathcal{K}$, the conditions (\ref{eq:cond_saut}) are satisfied.  Then
along the shock $\mathcal{K}$, we have
\begin{align*}
\pt_r w_1^-(t,x_2(t)) =&\frac{1}{U-\lambda_1^-}\left[(U-\lambda_2^-)\pt_{w_2^-}g\, \pt_r w_2^- - (\pt_{w_2^-}g +1) f^-\right.\\
&\left. +(U-\lambda_1^+)\pt_{w_1^+}g \,\pt_r w_1^+ +(U-\lambda_2^+)\pt_{w_2^+} g \, \pt_r w_2^+ +(\pt_{w_1^+}g - \pt_{w_2^+} g )f^+\right]\,.
\end{align*}
\end{lemma}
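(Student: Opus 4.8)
The plan is to differentiate the compatibility relation (\ref{eq:cond:bord}), namely $w_1^- = g(w_2^-, w_1^+, w_2^+)$, along the shock curve $\mathcal{K}$, and then to use the transport equations (\ref{eq:red_source}) to trade the time derivatives that appear for the spatial derivatives $\pt_r w_i^\pm$. The only structural input beyond this differentiation is a Lax condition, used once at the very end to justify a division.

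First I would introduce the derivative along $\mathcal{K}$. Since $\mathcal{K}=\{(t,x_2(t))\}$ with $\frac{\d{x_2}}{\d{t}}=U$ by (\ref{eq:x2}), any function $\phi(t,r)$ restricted to the shock satisfies $\frac{\d{}}{\d{t}}\phi(t,x_2(t)) = \pt_t\phi + U\pt_r\phi$. Applying this to (\ref{eq:cond:bord}) and expanding the right-hand side by the chain rule gives
\[
\frac{\d{w_1^-}}{\d{t}} = \pt_{w_2^-}g\,\frac{\d{w_2^-}}{\d{t}} + \pt_{w_1^+}g\,\frac{\d{w_1^+}}{\d{t}} + \pt_{w_2^+}g\,\frac{\d{w_2^+}}{\d{t}}\,.
\]
Next I would compute each of these material derivatives. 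From (\ref{eq:red_source}) we have $\pt_t w_1 = f-\lambda_1\pt_r w_1$ and $\pt_t w_2 = -f-\lambda_2\pt_r w_2$, valid on each side of the shock with the appropriate $f^\pm$ and $\lambda_i^\pm$; hence along $\mathcal{K}$,
\[
\frac{\d{w_1}}{\d{t}} = f+(U-\lambda_1)\pt_r w_1\,,\qquad \frac{\d{w_2}}{\d{t}} = -f+(U-\lambda_2)\pt_r w_2\,.
\]

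Substituting the four such expressions into the differentiated compatibility relation and grouping the coefficients of each spatial derivative together with the $f^\pm$ terms yields
\begin{align*}
(U-\lambda_1^-)\pt_r w_1^- =\ & (U-\lambda_2^-)\pt_{w_2^-}g\,\pt_r w_2^- - (\pt_{w_2^-}g+1)f^-\\
& + (U-\lambda_1^+)\pt_{w_1^+}g\,\pt_r w_1^+ + (U-\lambda_2^+)\pt_{w_2^+}g\,\pt_r w_2^+ + (\pt_{w_1^+}g-\pt_{w_2^+}g)f^+\,.
\end{align*}
Finally I would divide by $U-\lambda_1^-$. This is legitimate because the Lax condition (\ref{eq:U}) gives $\lambda_1^-=\lambda_1(w^-)<U$, so $U-\lambda_1^->0$; this is the only nonroutine ingredient, and it is already available from the shock conditions. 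There is no genuine obstacle in this lemma: the whole argument is one differentiation along $\mathcal{K}$ followed by bookkeeping. The single point requiring care is the sign convention in the $f^-$ and $f^+$ contributions, which arise because $\pt_t w_2$ carries the source $-f$ while $\pt_t w_1$ carries $+f$; tracking these correctly is what produces the factors $-(\pt_{w_2^-}g+1)$ and $+(\pt_{w_1^+}g-\pt_{w_2^+}g)$ in the stated formula.
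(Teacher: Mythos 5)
Your proposal is correct and follows essentially the same route as the paper's proof: differentiate the compatibility relation $w_1^-=g(w_2^-,w_1^+,w_2^+)$ along $\mathcal{K}$ by the chain rule, convert each material derivative $\pt_t w_i^\pm + U\pt_r w_i^\pm$ via the transport equations (\ref{eq:red_source}), equate with the direct expression $(U-\lambda_1^-)\pt_r w_1^- + f^-$, and solve for $\pt_r w_1^-$. The only difference is that you explicitly invoke the Lax condition (\ref{eq:U}) to justify dividing by $U-\lambda_1^->0$, a step the paper leaves implicit.
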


\begin{proof}
Derivating $w_1^-(t, x_2(t))$ we obtain
\begin{align*}
&\frac{\d{}}{\d{t}}w_1^-(t, x_2(t))\\
=&\pt_{w_2^-}g((U-\lambda_2^-)\pt_r w_2^- -f^-)+\pt_{w_1^+}g((U-\lambda_1^+)\pt_r w_1^+ +f^+) +\pt_{w_2^+}g((U-\lambda_2^+)\pt_r w_2^+ -f^+)
\end{align*}
but also $\frac{\d{}}{\d{t}}w_1^-(t, x_2(t)) =\pt_t w_1^-+U\pt_r w_1^-
=(U-\lambda_1^-)\pt_r w_1^- +f^-$. 
\end{proof}

\begin{proposition} \label{prop:constr}
Let $R_0>0$. Let us consider a Bethe-Weyl gas, satisfying $1< \mathscr{G}< 2$ and such that $(\rho \mapsto \frac{c(\rho)}{\rho})$ is integrable in 0.

Let us consider the free boundary problem consisting in the system (\ref{eq:source}) in $D_0$ with the following boundary conditions:
\begin{align*}
w_2(t, x_1(t))& =\omega(t)\geq 0 \,,& \textrm{on } \mathcal{C}_1\,;& &w_1(t, x_2(t))&=G(t,w_2)>0\,,&\textrm{ on } \mathcal{K}\,,
\end{align*}
with $\mathcal{C}_1$, $\mathcal{K}$, defined as above. 

Assume  that the compatibility conditions (\ref{eq:cond_saut}) are satisfied at time $t=0$ and that there exists $C_0>0$ such that, for any $M, T>0$:
\begin{align*}
\textrm{along } \mathcal{C}_1\,,&\quad\modulo{\pt_rw_2(t,x_1(t))}=\modulo{\frac{\omega' +f}{\lambda_1-\lambda_2}(t,x_1(t))}\leq \frac{C_0}{x_1(t)} \,,\\
\textrm{along } \mathcal{K}\,,&\quad\pt_t G(t,x_2(t))\geq \frac{-C_0}{x_2(t)}\,,\qquad\qquad \sup_{ t\in [0,T]}\modulo{\pt_2 G(t,x_2(t))} \leq C_0\,.
\end{align*}

Then, assuming that there exists $\delta>0$ such that $\rho^--\rho^+\geq \delta>0$ along $\mathcal{K}$, there exists a $\C1$ solution of (\ref{eq:source}) in $D_0$ whose time of existence is bounded   below in the following way
\begin{align*}
T_{ex}\geq C R_0\,,
\end{align*}
where $C$ depends on the boundary conditions.
\end{proposition}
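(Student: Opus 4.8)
The plan is to regard the angular problem as a free boundary problem, take local-in-time existence from Li \& Yu \cite{liYu} as the starting point, and then extend the solution by an a priori $\C1$ bound. The solution continues as long as $\norma{w}_{\L\infty}$, $\norma{\pt_r w_1}_{\L\infty}$, $\norma{\pt_r w_2}_{\L\infty}$ stay finite and $g$ remains defined, i.e. (Lemma \ref{lem:g}) as long as $\rho^->\rho^+$; the latter holds on $[0,T_*]$ by the standing hypothesis $\rho^--\rho^+\geq\delta$. So everything reduces to showing that the $\C1$ norm cannot blow up before a time proportional to $R_0$. The mechanism producing the factor $R_0$ is that, by Lemma \ref{lem:compab}, every source coefficient in the equations (\ref{eq:v1v2}) for $v_1,v_2$ carries a factor $1/r$ or $1/r^2$.

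First I would settle the $\C0$ picture on $D_0$. Since $\mathcal{C}_1$ is a $1$-characteristic issued from $R_0$ and, by Lemma \ref{lem:cH}, $\lambda_1=u-c\geq u-H=w_1>0$, Lemma \ref{lem:C0} (or Proposition \ref{prop:CO2}) propagates $w_1>0$ and makes $x_1$ increasing; hence $x_1(t)\geq R_0$ and, $D_0$ lying to the right of $\mathcal{C}_1$, one has $r\geq R_0$ throughout $D_0$. The same results bound $w_1,w_2$, hence $u,c,H$, in $\L\infty$ by constants depending only on the boundary data, not on $R_0$.

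Next comes the $\C1$ estimate. I would integrate the Riccati equations (\ref{eq:v1v2}) along the characteristics, supplying the boundary values of $v_1,v_2$ from the two sides of $D_0$. The quantity $v_2$ is carried in from $\mathcal{C}_1$, where $\modulo{\pt_r w_2}\leq C_0/x_1\leq C_0/R_0$; the quantity $v_1$ is carried in from the shock $\mathcal{K}$ — the Lax inequality $\lambda_1^-<U$ in (\ref{eq:U}) makes $1$-characteristics enter $D_0$ through $\mathcal{K}$ — and there $\pt_r w_1^-$ is given by Lemma \ref{lem:derivee} in terms of $\pt_r w_2^-$ (already controlled through $v_2$), of $\pt_r w_1^+,\pt_r w_2^+$ (bounded by $C_0/r$ via the regular exterior data) and of $f^\pm=O(1/r)$, so that $\modulo{v_1}=O(1/R_0)$ on $\mathcal{K}$. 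By Lemma \ref{lem:compab}, $\modulo{a_0},\modulo{b_0}=O(1)$, $\modulo{a_1},\modulo{b_1}\leq A_1/R_0$, $\modulo{a_2},\modulo{b_2}\leq A_2/R_0^2$, with $A_1,A_2$ depending only on the $\C0$ bounds. Feeding $\min X_i\geq R_0$ and these $O(1/R_0)$ boundary derivatives into the H\"ormander-type Lemmas \ref{lem:w'pos} and \ref{lem:w'neg}, the lower bound on each $v_i$ stays valid for a time $\tfrac{\min X_i}{A_1}Q^{-1}(\cdots)\geq \tfrac{R_0}{A_1}Q^{-1}(\cdots)$, proportional to $R_0$ with a constant depending only on the boundary data; the matching upper bounds are immediate from Lemma \ref{lem:up}. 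On $[0,CR_0]$ all three norms stay finite, the local solution extends, and $T_{ex}\geq CR_0$.

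The hard part is the Riccati nonlinearity $a_0 v_1^2$ with $a_0\leq 0$: this is exactly the gradient-catastrophe term through which $\pt_r w_1$ can blow up to $-\infty$ inside $D_0$. Showing that its blow-up time exceeds $CR_0$, rather than some $R_0$-independent constant, is the whole game, and it is delivered precisely by the scaling $a_i=\bar a_i(w)/r^i$ (Lemma \ref{lem:compab}) together with $r\geq R_0$ and the $O(1/R_0)$ size of the boundary derivatives, passed through the H\"ormander lemma. A secondary point is that the boundary value of $v_1$ on $\mathcal{K}$ is not free but tied to $v_2$ via Lemma \ref{lem:derivee}; this creates no circularity, since the equation for $v_2$ in (\ref{eq:v1v2}) involves $w$ only and not $v_1$, so the estimates close in the triangular order $\C0\to v_2\to v_1$.
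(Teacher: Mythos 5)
Your proposal is correct and follows essentially the same route as the paper's proof: $\C0$ bounds via Lemma \ref{lem:C0} with $w_1>0$ forcing $r\geq R_0$ throughout $D_0$, then the Riccati equations (\ref{eq:v1v2}) integrated along characteristics using the $1/r^i$ scaling of Lemma \ref{lem:compab}, boundary data of size $O(1/R_0)$, and the H\"ormander-type Lemmas \ref{lem:w'pos}--\ref{lem:w'neg}, closed in the triangular order $\C0 \to v_2 \to v_1$, yielding $T_{ex}\geq C R_0$. The only cosmetic difference is that on $\mathcal{K}$ the paper differentiates the boundary condition $w_1=G(t,w_2)$ directly, using the stated hypotheses on $\pt_t G$ and $\pt_2 G$, whereas you invoke Lemma \ref{lem:derivee} together with bounds on the exterior traces $\pt_r w_1^+,\pt_r w_2^+$ (i.e.\ the packaging of Theorem \ref{thm:main} rather than of the Proposition); the resulting $O(1/R_0)$ control of $v_1$ on $\mathcal{K}$ through $v_2$ is the same.
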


\begin{proof}
Let $(t,r)\in D_0$.  The 2-characteristic going through $(t,r)$ originates in $D_-$  and crosses $\mathcal{C}_1$ in $(\beta, x_1(\beta))$; meanwhile the 1-characteristic going through $(t,r)$ originates in $(\alpha, x_2(\alpha))\in \mathcal{K}$.

As $w_1> 0$ along $\mathcal{K}$, we can apply Lemma  \ref{lem:C0} on $ D_0$ to obtain a $\C0$ estimate of $w_1$ and $w_2$. We obtain
\[
0\leq w_1(\alpha, x_2(\alpha) ) \leq w_1(t,r) \leq w_2(t,r)  \leq w_2(\beta, x_1(\beta)) \leq \norma{\omega}_{\L\infty}\,.
\]

Next, we want to obtain $\C1$ estimates for $w_1$ and $w_2$. 
First, we derive an $\L\infty$ estimate for  $\pt_r w_2$. Let us remind that $v_2=e^h(\pt_r w_2+\Psi)$ with $\Psi (r,w)=\frac{\psi(w)}{r}$. 
\begin{itemize}
\item In the case $(\pt_r w_2 +\Psi)(\beta, x_1(\beta))\geq 0$, then we have $v_2(\beta,x_1(\beta))\geq 0$ and we can apply Lemma \ref{lem:w'pos} on $D_0$ to  obtain an $\L\infty$-estimate of $\pt_r w_2$    in $D_0$. Since $X_2'\geq 0$,  we obtain a lower bound for the time of existence of $\pt_r w_2$: 
\[
T_{ex}\geq   x_1(\beta) \frac{x(B_0, B_1, B_2)}{B_1} \geq  R_0 \frac{x(B_0, B_1, B_2)}{B_1} \,,
\]
where $B_0$, $B_1$, $B_2$ are defined as in (\ref{eq:B0B1B2}).
\item In the case $v_2(\beta,x_1(\beta))\leq 0$, as $X_2'\geq 0$, by Lemma \ref{lem:w'neg} we  obtain the estimate
\[
T_{ex}\geq \frac{R_0} {B_1} Q^{-1}\left(\Xi_+\right)
\]
where $B_1$  is  defined as in (\ref{eq:B0B1B2}) and $\Xi_+ = \frac{B_1}{\sqrt{B_0 B_2}} \frac{1}{1+\frac {\sqrt{B_0}} {\sqrt{B_2} } ( \modulo{v_2(\beta,x_1(\beta))}  x_1(\beta))} $. 

Since $\Psi(w,r)=\frac{\psi(w)}{r}\leq \frac{\norma{\psi}_{\L\infty}}{r}$ and, by hypothesis $\modulo{\pt_r w_2(\beta, x_1(\beta))}\leq \frac{C_0}{x_1(\beta)}$, we have 
\[
\modulo{v_2(\beta,x_1(\beta))}  x_1(\beta)\leq \norma{\psi}_{\L\infty}+C_0 :=C_1\,,
\]
and we finally have
\[
T_{ex}\geq \frac{R_0} {B_1}Q^{-1}\left(\frac{B_1}{\sqrt{B_0 B_2}} \frac{1}{1+\frac {\sqrt{B_0}} {\sqrt{B_2} } C_1} \right)\,.
\]
\end{itemize}

Let us now find an estimate in $\L\infty$ for $\pt_r w_1$. We want to proceed in the same way. First we prove that there exists $C$ such that  $\pt_r w_1+\Phi\geq \frac{-C}{r}$ along $\mathcal{K}$. derivating the boundary condition, we get
\begin{align*}
\frac{\d{}}{\d{t}}w_1(t, x_2(t))&=
\pt_t w_1+ U\pt_r w_1\\
&= f+(U-\lambda_1)\pt_r w_1\\
&=\pt_t G+\pt_2 G\left((U-\lambda_2)\pt_r w_2 -f\right)\,.
\end{align*}
Hence $\pt_r w_1=\frac{1}{U-\lambda_1}\left[ \pt_t G-f +\pt_2 G\left((U-\lambda_2)\pt_r w_2 -f\right)\right]$.
By hypothesis we have   a lower bound on $\pt_t G$. We can find a similar lower bound on $f$, whose expression is known. By hypothesis $U-\lambda_2^-\leq 0$, $U-\lambda_1^- >0$ and they are  depending only on $(w_1, w_2)$. Thanks to Lemma \ref{lem:up}, we obtain the following upper bound on $v_2=e^h(\pt_r w_2+\Psi)$,  reminding that, by Lemma \ref{lem:compab}, $b_i(r,w)=\bar b_i(w)/r^i$ for any  $i\in \{0,1,2\}$:
\begin{align*}
v_2(t, X_2(t))e^{-\int_\alpha^t \frac{(\lambda_2+\bar b_1)(s, X_2(s))}{X_2(s)}\d{s}}\leq \frac{X_2(\alpha) v_2(\alpha, X_1(\alpha))}{X_2(t)} +\frac{1}{X_2(t)} \int_\alpha^t \frac{\bar b_2}{X_2(s)} e^{-\int_\alpha^s\frac{(\lambda_2+\bar b_1)(\tau, X_2(\tau))}{X_2(\tau)} \d{\tau}}\d{s}\,.
\end{align*}  
Reminding that $X_1'\geq 0$ and $X_2'\geq 0$, if $(\alpha, X_2(\alpha))\in \mathcal{C}_1$, we have, for any $s\geq \alpha$, $1/X_2(s)\leq 1/X_2(\alpha)\leq 1/R_0$. Using furthermore the hypothesis $\modulo{r(\pt_r w_2 + \Psi)}\leq C_0$ along $\mathcal{C}_1$, we obtain
 \begin{align*}
v_2(t, X_2(t))\leq \left(\frac{C_0}{X_2(t)}+\frac{1}{X_2(t)} (t-\alpha) \frac{B_2}{R_0}\right) e^{(t-\alpha) (\sup(\lambda_2)+B_1)/R_0}\,,
\end{align*} 
which provide us the estimate $v_2(t,x_2(t)) \leq \frac{1}{x_2(t)}C e^{\frac{C (t-\alpha)}{R_0}}$. We obtain the desired estimate taking a time $t$ bounded.
\begin{itemize}
\item 
If $(\pt_r w_1 +\Phi)(\alpha, x_2(\alpha))\geq 0$, then we have $v_1(\alpha,x_2(\alpha))\geq 0$ and we can apply Lemma \ref{lem:w'pos} on $D_0$ to  obtain estimate of $\pt_r w_1$  in $\L\infty$ in $D_0$. Since $X_1'\geq 0$,  we obtain a lower bound for the time of existence of $\pt_r w_1$: 
\[
T_{ex}\geq   x_2(\alpha) \frac{x(A_0, A_1, A_2)}{A_1} \geq  R_0 \frac{x(A_0, A_1, A_2)}{A_1} \,,
\]
where $A_0$, $A_1$, $A_2$ are defined as in (\ref{eq:A0A1A2}) and $x(A_0, A_1, A_2)$ is defined in Lemma \ref{lem:w'pos}.
\item  If $v_1(\alpha,x_2(\alpha))\leq 0$, as $X_1'\geq 0$, by Lemma \ref{lem:w'neg} we  obtain the estimate
\[
T_{ex}\geq \frac{R_0} {A_1} Q^{-1}\left(\Theta_+\right)
\]
where $A_1$  is  defined as in (\ref{eq:A0A1A2}) and $\Theta_+ = \frac{A_1}{\sqrt{A_0 A_2}} \frac{1}{1+\frac {\sqrt{A_0}} {\sqrt{A_2} } ( \modulo{v_1(\alpha,x_2(\alpha))}  x_2(\alpha))} $. Since $\Phi(w,r)=\frac{\phi(w)}{r}\leq \frac{C_1}{r}$ by Lemma \ref{lem:compab} and $\modulo{\pt_r w_1(\alpha, x_2(\alpha))}\leq \frac{C_0}{x_2(\alpha)}$, we have 
\[
\modulo{v_1(\alpha,x_2(\alpha))}  x_2(\alpha)\leq C_1+C_0 :=C_2\,,
\]
and we finally have
\[
T_{ex}\geq \frac{R_0} {A_1} Q^{-1}\left(\frac{A_1}{\sqrt{A_0 A_2}} \frac{1}{1+\frac {\sqrt{A_0}} {\sqrt{A_2} } C_2} \right)\,.
\]
\end{itemize}
\end{proof}

We are now able to construct a piecewise solution:
\begin{theorem}\label{thm:main}
Let us consider a Bethe-Weyl gas satisfying $1<\mathscr{G}<2$ and such that $(\rho \mapsto \frac{c(\rho)}{\rho})$ is integrable in 0.

Let $R_0>0$. Assume there exists two regular solutions of (\ref{eq:source}) $(\rho^-, u^-)$ and $(\rho^+, u^+)$ with a time of existence $\mathcal{T}>0$, $(\rho^-, u^-)$ admitting an initial condition at time $t=0$ defined on $[0, R_0]$ and $(\rho^+, u^+)$ admitting an initial condition  at time $t=0$ on $[R_0, +\infty[$. We assume that the compatibility condition (\ref{eq:cond_saut})  are satisfied at time $t=0$, in $r=R_0$. We denote  $w=(w_1, w_2)$  the associated Riemann invariant defined as in (\ref{def:w}). The superspcript $\pm$ means the quantity asoociated to $(\rho^\pm,u^\pm)$ definied in the domains $D^\pm$. The subscript 0 indicates the initial condition at time $t=0$.

Let us assume that
\begin{align*}
 \min_{r\leq R_0}w_{1,0}^-(r)&> 0 \,, & w_{1,0}^-(R_0)+\inf_{r\geq R_0} w_{1,0}^+(r)&>0\,.
\end{align*}
and  that there exists $C_0>0$ such that, for all $t\in [0, \mathcal{T}[$ and any $r$ where $w^-$ (respectively $w^+$) is defined:
\begin{align*}
\modulo{\pt_r w_2^-(t, r)}&\leq \frac{C_0}{r}\,,  \pt_r w_1^+(t,r) &\geq \frac{-C_0}{r}\,,&  \pt_r w_2^+(t,r) &\leq \frac{C_0}{r}\,.
\end{align*}

Then, as long as $\rho^->\rho^+$ along $\mathcal{K}$, there exists a shock wave solution of (\ref{eq:source}) with initial conditions
\begin{align*}
\textrm{for }&r\leq R_0\,,& u(0,r)&=u_0^-(r)\,,&\rho(0, r)&=\rho_0^-(r)\,,\\
\textrm{for }&r\geq R_0\,,& u(0,r)&=u_0^+(r)\,,&\rho(0,r) &=\rho_0^+(r)\,,
\end{align*}
and its time of existence is bounded below in the following way
\begin{align*}
T_{ex}\geq \min(\mathcal{T}, C R_0)\,,
\end{align*}
where $C$ depends on the initial conditions.
\end{theorem}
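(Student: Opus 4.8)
The plan is to reduce Theorem~\ref{thm:main} to the angular estimate of Proposition~\ref{prop:constr}. I cut the quarter-plane $\{r>0,\ t\geq 0\}$ into the three regions $D_-$, $D_0$, $D_+$ introduced before that proposition, the interfaces $\mathcal{C}_1$ and $\mathcal{K}$ being the $1$-characteristic and the $2$-shock both issued from the foot $(0,R_0)$ and defined by (\ref{eq:x2}). On $D_-$ I keep the given regular solution $w^-$ and on $D_+$ the given regular solution $w^+$; each solves (\ref{eq:source}) up to time $\mathcal{T}$, and since all characteristics and the shock move to the right these restrictions are genuine solutions in their domains. The whole difficulty is thus concentrated in the free boundary problem on $D_0$, for which Proposition~\ref{prop:constr} already provides a $\C1$ solution with $T_{ex}\geq C R_0$, \emph{provided} its hypotheses are met; the core of the argument is therefore to derive those hypotheses from the data of the theorem.

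First I would propagate positivity via the $\C0$ estimates of Section~\ref{sec:C0}. The hypotheses $\min_{r\leq R_0}w_{1,0}^-(r)>0$ and $w_{1,0}^-(R_0)+\inf_{r\geq R_0}w_{1,0}^+(r)>0$ are tailored to Lemma~\ref{lem:C0} and Proposition~\ref{prop:CO2} (which allows $w_1$ to start negative): they guarantee $u>0$ and, in particular, $w_1^+\geq 0$ along $\mathcal{K}$. Lemma~\ref{lem:w1posK}, whose relation $u^-=u^++\sqrt{F(\rho^-,\rho^+)}$ is exactly the jump condition (\ref{eq:cond_saut}), then gives $w_1^-\geq w_1^+\geq 0$ across the shock, so the datum $w_1=G(t,w_2)$ on $\mathcal{K}$ is positive as required. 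Moreover $u>0$ yields $x_1'=\lambda_1\geq w_1\geq 0$ and $x_2'=U\geq 0$, so both interfaces recede from the origin and $x_1(t),x_2(t)\geq R_0$.

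The main obstacle is to turn the pointwise hypotheses of the theorem into the scale-invariant ($1/r$) boundary bounds demanded by Proposition~\ref{prop:constr}. Along $\mathcal{C}_1$ the $\C1$-matching fixes $\omega=w_2^-|_{\mathcal{C}_1}$, and the identity $\pt_r w_2=(\omega'+f)/(\lambda_1-\lambda_2)$ converts $\modulo{\pt_r w_2^-(t,r)}\leq C_0/r$ into the required control of $\pt_r w_2$ on $\mathcal{C}_1$. Along $\mathcal{K}$ one has $G(t,w_2^-)=g(w_2^-,w^+(t,x_2(t)))$ with $g$ from Lemma~\ref{lem:g}; the bound $\sup_t\modulo{\pt_2 G}\leq C_0$ follows from the $\C1$ regularity of $g$ and the $\C0$ bounds on its arguments. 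For the lower bound on $\pt_t G$ I differentiate $g$ along $\mathcal{K}$ and replace the shock derivatives of $w^+$ by $\pt_t w_1^++U\pt_r w_1^+=f^++(U-\lambda_1^+)\pt_r w_1^+$ and $\pt_t w_2^++U\pt_r w_2^+=-f^++(U-\lambda_2^+)\pt_r w_2^+$ coming from (\ref{eq:red_source}); then the signs $\pt_{w_1^+}g\geq 0$ and $\pt_{w_2^+}g\leq 0$ established in Lemma~\ref{lem:g}, together with $U-\lambda_1^+>0$ and $U-\lambda_2^+\geq 0$ (the Lax conditions (\ref{eq:U})) and the hypotheses $\pt_r w_1^+\geq -C_0/r$, $\pt_r w_2^+\leq C_0/r$, yield $\pt_t G(t,x_2(t))\geq -C_0/x_2(t)$. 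This step is delicate precisely because the compatibility relation $g$ is only implicitly known and must be controlled with the correct $1/r$ scaling.

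With all hypotheses of Proposition~\ref{prop:constr} verified, and the standing assumption $\rho^->\rho^+$ along $\mathcal{K}$ furnishing on each compact time interval the uniform gap $\delta>0$ required there, I obtain a $\C1$ solution on $D_0$ with $T_{ex}\geq C R_0$. Gluing it to $w^-$ across $\mathcal{C}_1$ (which is $\C1$ by the choice of $\omega$) and to $w^+$ across $\mathcal{K}$ (which satisfies Rankine--Hugoniot by construction of $U$ and $g$, see Section~\ref{sectionRH}) produces a single propagating shock wave with the prescribed initial data. Its time of existence is the smaller of the time $\mathcal{T}$ governing the regular pieces $w^\pm$ and the angular bound $C R_0$, which is exactly the claimed estimate $T_{ex}\geq\min(\mathcal{T},C R_0)$.
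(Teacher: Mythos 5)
Your overall architecture is the same as the paper's: the decomposition into $D_-$, $D_0$, $D_+$, the reduction of everything to the angular problem solved by Proposition~\ref{prop:constr}, and the verification of the $\C1$ boundary hypotheses along $\mathcal{K}$ by differentiating $g$ along the shock, substituting the equations (\ref{eq:red_source}) for the traces of $w^+$, and using the signs of $\pt_{w_1^+}g$, $\pt_{w_2^+}g$ together with the Lax inequalities and the hypotheses $\pt_r w_1^+\geq -C_0/r$, $\pt_r w_2^+\leq C_0/r$ (this is exactly the role of Lemma~\ref{lem:derivee} in the paper). However, there is a genuine gap in your positivity step. You claim that the hypotheses guarantee $w_1^+\geq 0$ along $\mathcal{K}$, hence via Lemma~\ref{lem:w1posK} that the boundary datum $G=g(w_2^-,w^+)$ is positive, which is what Proposition~\ref{prop:constr} requires. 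But the theorem does \emph{not} assume $w_{1,0}^+\geq 0$: it only assumes $w_{1,0}^-(R_0)+\inf_{r\geq R_0}w_{1,0}^+(r)>0$, precisely to allow $\inf_{r\geq R_0}w_{1,0}^+<0$. In that regime Lemma~\ref{lem:w1posK} only yields $w_1^-\geq w_1^+\geq \inf_{r\geq R_0}w_{1,0}^+$, which may be negative, so the hypothesis $G>0$ of Proposition~\ref{prop:constr} can fail and Lemma~\ref{lem:C0} cannot be invoked in $D_0$. Your argument therefore only covers the stronger hypothesis $w_{1,0}^+\geq 0$, and your parenthetical acknowledgment that Proposition~\ref{prop:CO2} ``allows $w_1$ to start negative'' is never actually used.

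What is missing is the quantitative computation that constitutes the core of the paper's proof: one must show directly that $u^-\geq 0$ in $D_0$ on a time interval proportional to $R_0$, so that the $\C0$ estimates of Proposition~\ref{prop:CO2} (rather than Lemma~\ref{lem:C0}) are available there. Concretely, for $(t,r)\in D_0$ one follows the $2$-characteristic back to its crossing point $(\beta,x_1(\beta))$ with $\mathcal{C}_1$, uses $w_2(\beta,x_1(\beta))\geq w_1(\beta,x_1(\beta))\geq w_{1,0}^-(R_0)$ and $x_1(\beta)\geq R_0$ to bound $w_2^-$ from below by $\bigl(1/w_{1,0}^-(R_0)+(d-1)(t-\beta)/(4R_0)\bigr)^{-1}$, combines this with $w_1^-\geq\inf_{r\geq R_0}w_{1,0}^+$ along $\mathcal{K}$, and concludes that $2u^-=w_1^-+w_2^->0$ for all $t\leq C R_0$ with $C$ explicit in terms of $w_{1,0}^-(R_0)$ and $\inf_{r\geq R_0}w_{1,0}^+$. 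It is this step that produces the time of validity proportional to $R_0$ for the $\C0$ estimates in $D_0$ and hence makes the final bound $T_{ex}\geq\min(\mathcal{T},CR_0)$ legitimate in the generality claimed by the theorem; without it, your proof establishes the result only under an additional sign assumption on $w_{1,0}^+$.
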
 

For example, we can use the result obtained in \cite{mercier_solreg} in order to have $\mathcal{T}=+\infty$.
For $(\rho^+, u^+)$, a stationary solution could also work.

\begin{proof}
Let $D_-, D_0, D_+$ be defined as before.
Let us define $G(t, w_2):=g(w_2, w^+)$, where $g$ has been defined in Lemma \ref{lem:g}. Using the estimate of Lemma  \ref{lem:w1posK} in $D_+$, we obtain along the discontinuity line $\mathcal{K}$: $w_1^-=g(w_2^-, w^+)\geq w_1^+\geq w_{1,0}^+\geq \inf_{ \{r\geq R_0\} } w_{1,0}^+$. 
Besides, $u> 0$ in $D_-$, in particular $u^-(0,R_0)>0$, and by continuity of $u^-$ in $D_-\cup D_0$, at least for a small time, $u^-$ is non-negative in $D_0$. 
Note $[0,t_0]$ the time interval in which $u^-$  is non-negative in $D_0$. For $(t,r)\in D_0$ such that $t\leq t_0$. Let us denote $\beta$ the time at which the 2-characteristic going through $(t,r)$ cross $\mathcal{C}_1$.
\[
u^-(t,r)\geq \frac{1}{2}(w_1^-+w_2^-)(t,x_2(t))\geq \frac{1}{2}(\min_{r\geq R_0} (w_{1,0}^+(r)) + \frac{w_{2}(\beta, x_1(\beta))}{1+w_2(\beta, x_1(\beta) )\frac{(d-1)(t-\beta)}{4x_1(\beta)})} )\,.
\]
Besides, as $(\beta,x_1(\beta))\in D_-$, we have $w_2(\beta, x_1(\beta))\geq w_1(\beta, x_1(\beta))\geq w_1(0, R_0)$. 
Furthermore, $x_1'\geq 0$ then $x_1(\beta)\geq R_0$. Then
\[
u^(t,r)\geq \frac{1}{2}(w_1^-+w_2^-)(t,x_2(t))\geq \frac{1}{2}(\min_{r\geq R_0} (w_{1,0}^+) + \frac{1}{\frac{1}{w_1^-(0,R_0 )}+\frac{(d-1)(t-\beta)}{4R_0})} )
\]
If $w_{1,0}^+\geq 0$, $u^-\geq 0$ in all $D_0$. Otherwise, if $w_{1,0}^+<0$ then $u^-$ is non-negative  as long as 
\[
t-\beta \leq \frac{4 R_0}{(d-1)}\left(\frac{1}{\min(w_{1,0}^+)}+\frac{1}{w_2(\beta, x_1(\beta))}\right)\,.
\] 
Then we obtain that $u^-$ is non-negative if 
\[
t\leq  \frac{4 R_0}{(d-1)}\left(\frac{1}{\min(w_{1,0}^+)}+\frac{1}{w_1^-(0,R_0)}\right)\,.
\]
Hence, we can apply Proposition \ref{prop:CO2} to $(t,r)\in D_0$ to find a time of validity for the $\C0$-estimates proportional to $R_0$.

Besides, the expression of $\pt_r w_1^-$ along $\mathcal{K}$ in Lemma \ref{lem:derivee} and the hypotheses on $\pt_r w_1^+$, $\pt_r w_2^+$ allow us to check that the hypotheses of Proposition \ref{prop:constr} are satisfied. Hence, we have a regular solution in the angular domain satisfying the boundary condition $w_2(t, x_1(t))=w_2^-(t, x_1(t))$ and $w_1(t, x_2(t))=g(w_2, w^+)$.

\end{proof}

\subsection{Checking hypotheses}
We determine now a set of hypotheses on the initial conditions so that the hypothesis ``$\rho^->\rho^+$'' of the previous theorem is satisfied. The condition $\modulo{\pt_r w_2}\leq \frac{C}{r}$ along $\mathcal{C}_1$ can not be computed with this method: the construction of a regular solution in $D_-$ has to be obtained by another result on regular solution (see for example D. Serre \cite{Sr97} or M. Grassin \cite{Grass} or M. Lécureux-Mercier \cite{mercier_solreg}).
\begin{proposition}\label{prop:condD}
%
%
%

In the same context as in Theorem \ref{thm:main}, we assume furthermore that $w_{1,0}^-(R_0)+\min_{ \{r\geq R_0\} } w_{1,0}^+>0$ and $w_{1,0}^-(R_0)>\max_{\{r\geq R_0\}} w_{2,0}^+$. Then, the estimates of Proposition \ref{prop:CO2} are available in $D_0$ for a time proportional to $R_0$ and furthermore, along $\mathcal{K}$, $\rho^+<\rho^-$.
\end{proposition}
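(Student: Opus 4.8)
The plan is to rephrase the conclusion through Remark~\ref{rem:condDens}, which identifies the condition $\rho^->\rho^+$ along $\mathcal{K}$ with the inequality $w_2^->w_2^+$ along $\mathcal{K}$. The first assertion---that the estimates of Proposition~\ref{prop:CO2} hold in $D_0$ on a time interval of length proportional to $R_0$---is obtained by rerunning the positivity argument of the proof of Theorem~\ref{thm:main}: the hypothesis $w_{1,0}^-(R_0)+\min_{r\geq R_0}w_{1,0}^+>0$ forces $u\geq 0$ in $D_0$ for $t\leq T_0$ with $T_0=CR_0$, which is exactly the admissibility window for Proposition~\ref{prop:CO2}. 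It therefore remains to establish $w_2^->w_2^+$ on $\mathcal{K}$ for $t\in[0,T_0]$, which I would do by bounding $w_2^+$ from above and $w_2^-$ from below along $\mathcal{K}$, and then invoking the second hypothesis $w_{1,0}^-(R_0)>\max_{r\geq R_0}w_{2,0}^+$.

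For the upper bound I would apply the $\C0$ estimates (Lemma~\ref{lem:C0}, Proposition~\ref{prop:CO2}) to the regular solution $w^+$ in $D_+$. Since the Lax condition gives $U\geq\lambda_2^+$, the $2$-characteristic of $w^+$ arriving at a point of $\mathcal{K}$ stays in $D_+$ when traced backwards and reaches $t=0$ at some $r_2\geq R_0$; as $w_2^+$ is non-increasing along $2$-characteristics while $u^+\geq 0$, this yields $w_2^+(t,x_2(t))\leq w_{2,0}^+(r_2)\leq \max_{r\geq R_0}w_{2,0}^+$ on the relevant window. For the lower bound I would use that $\mathcal{C}_1$ is itself a $1$-characteristic: by Lemma~\ref{lem:C0} the invariant $w_1$ increases along $\mathcal{C}_1$, so together with $w_2\geq w_1$ one gets $w_2^-(t,x_1(t))=\omega(t)\geq w_1^-(t,x_1(t))\geq w_{1,0}^-(R_0)$ on $\mathcal{C}_1$. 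I would then transport this lower bound along the $2$-characteristic joining $\mathcal{C}_1$, at its crossing time $\beta$, to a point $(t,x_2(t))\in\mathcal{K}$: writing the estimate (\ref{eq:estw2}) between $\beta$ and $t$ and using $X_2(\tau)\geq x_1(\beta)\geq R_0$ to bound $\int_\beta^t\frac{d-1}{4X_2(\tau)}\d{\tau}\leq \frac{(d-1)T_0}{4R_0}$, the monotonicity of $a\mapsto a/(1+a\epsilon)$ gives
\[
w_2^-(t,x_2(t))\geq \frac{w_{1,0}^-(R_0)}{1+w_{1,0}^-(R_0)\,\frac{(d-1)T_0}{4R_0}}\,.
\]
With $T_0=CR_0$ the right-hand side equals $\frac{w_{1,0}^-(R_0)}{1+w_{1,0}^-(R_0)(d-1)C/4}$, which tends to $w_{1,0}^-(R_0)$ as $C\to 0$. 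Since by hypothesis $w_{1,0}^-(R_0)>\max_{r\geq R_0}w_{2,0}^+$, choosing $C$ small enough makes this lower bound strictly larger than the upper bound on $w_2^+$, whence $w_2^->w_2^+$, i.e.\ $\rho^->\rho^+$, on $\mathcal{K}$ for $t\in[0,T_0]$ with $T_0\propto R_0$.

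The delicate point is the bookkeeping of signs and time windows: the monotonicity of $w_2^+$ in $D_+$, the monotonicity of $w_1$ along $\mathcal{C}_1$, and the transport estimate in $D_0$ all require $u\geq 0$ along the corresponding characteristics, so I must check that a single constant $C$ can be chosen for which the positivity window, the $\C0$-admissibility window of Proposition~\ref{prop:CO2}, and the gap-preserving window for $w_2^-$ against $\max_{r\geq R_0}w_{2,0}^+$ coincide. Everything else is the quantitative decay of $w_2$ along $2$-characteristics, which causes no trouble once $X_2\geq R_0$ is available.
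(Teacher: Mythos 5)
Your proposal is correct and follows essentially the same route as the paper's own proof: reduction via Remark \ref{rem:condDens} to $w_2^->w_2^+$ on $\mathcal{K}$, a lower bound on $w_2^-$ obtained by transporting $w_2(\beta,x_1(\beta))\geq w_1(\beta,x_1(\beta))\geq w_{1,0}^-(R_0)$ along the $2$-characteristic with $X_2\geq R_0$, an upper bound $w_2^+\leq\max w_{2,0}^+$ in $D_+$, and a comparison yielding a time window proportional to $R_0$. The only cosmetic difference is that the paper solves the inequality explicitly to get the threshold $t<\frac{4R_0}{d-1}\,\frac{w_{1,0}^-(R_0)-\max w_{2,0}^+}{w_{1,0}^-(R_0)\max w_{2,0}^+}$, whereas you phrase it as choosing the constant $C$ small enough, which amounts to the same thing.
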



\begin{proof}
Let $t\in\reali_+$ we denote $\beta$ the time at which the 2-characteristic going through $(t,x_2(t))$ crosses $\mathcal{C}_1$. By Proposition \ref{prop:CO2}, we have
\begin{align*}
w_2^-(t,x_2(t))\geq& \frac{w_2(\beta, x_1(\beta))}{1+(t-\beta)\frac{(d-1)}{4 x_1(\beta)} w_2(\beta, x_1(\beta)) }\\
\geq & \frac{1}{\frac{1}{w_{1,0}^-(R_0)} +(t-\beta)\frac{(d-1)}{4R_0}}\,.
\end{align*}
Besides, $w_2^+(t,x_2(t))\leq \max (w_{2,0}^+)$.
By remark \ref{rem:condDens}, the condition $\rho^+<\rho^-$ on $\mathcal{K}$ is equivalent to $w_2^->w_2^+$. Hence it is sufficient to have 
\[
\frac{1}{\frac{1}{w_{1,0}^-(R_0)} +(t-\beta)\frac{(d-1)}{4R_0}}> \max (w_{2,0}^+)
\]
Finally it is sufficient to have $w_{1,0}^-(R_0)>\max(w_{2,0}^+)$ and $t< \frac{4 R_0}{(d-1)}\frac{w_1^-(0,R_0)-\max w_{2,0}^+}{w_{1}^-(0,R_0)\max w_{2,0}^+}$.
\end{proof}

\appendix
\section{Time of existence for ODE}\label{sec:ode}
\begin{lemma}[Maximum principle for ODE.]\label{lem:ppemax}
Let $a,b:[0,T]\to \reali$  be continuous applications. Assume that  $w,z:[0,T]\to \reali$ are continuous applications such that $z(0)\geq w(0)$ and $w'=aw^2+b$, $z'\geq az^2+b$ in $[0,T]$. Then 
$z(t)\geq w(t)$ for all $t\in [0,T]$.
\end{lemma}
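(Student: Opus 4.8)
The plan is to reduce the quadratic comparison to a linear differential inequality by factoring the difference of squares. First I would set $\phi = z - w$. Both $w$ and $z$ are differentiable on $[0,T]$ (the relations $w' = aw^2+b$ and $z' \geq az^2+b$ presuppose this), hence so is $\phi$, and the initial hypothesis gives $\phi(0) = z(0)-w(0) \geq 0$. Subtracting the equation satisfied by $w$ from the inequality satisfied by $z$ yields
\[
\phi' = z' - w' \geq a(z^2 - w^2) = a\,(z+w)\,\phi\,.
\]

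Next I would introduce the coefficient $c(t) := a(t)\bigl(z(t)+w(t)\bigr)$. Since $a$, $w$ and $z$ are continuous, $c$ is continuous on the compact interval $[0,T]$, hence bounded, and $\phi$ satisfies the linear differential inequality $\phi' \geq c\,\phi$ together with $\phi(0) \geq 0$.

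Then I would conclude by the standard integrating-factor (Gronwall-type) argument. Setting $\mu(t) = \exp\bigl(-\int_0^t c(s)\,\d{s}\bigr)$, which is strictly positive, one computes $(\mu\phi)' = \mu\,(\phi' - c\phi) \geq 0$, so $\mu\phi$ is nondecreasing on $[0,T]$. Consequently $\mu(t)\phi(t) \geq \mu(0)\phi(0) = \phi(0) \geq 0$, and dividing by $\mu(t) > 0$ gives $\phi(t) \geq 0$, that is $z(t) \geq w(t)$ for all $t \in [0,T]$.

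There is no genuine obstacle in this argument; the only point deserving a word of care is that the factorization $z^2 - w^2 = (z+w)(z-w)$ converts the quadratic nonlinearity into a term linear in $\phi$ whose coefficient $c$ is merely continuous. Continuity on the compact interval $[0,T]$, however, is exactly enough to guarantee that $\mu$ is well defined, finite and strictly positive, so no sign or integrability issue can spoil the conclusion.
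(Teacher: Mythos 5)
Your proof is correct and follows essentially the same route as the paper's: factoring $z^2-w^2=(z+w)(z-w)$ to get a linear differential inequality for $z-w$, then applying the integrating factor $\exp\left(-\int_0^t a(s)(z+w)(s)\d{s}\right)$ and monotonicity to conclude $z-w\geq 0$. The only difference is cosmetic (naming the coefficient $c$ and the factor $\mu$ explicitly), so there is nothing to add.
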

\begin{proof}
By hypothesis, 
\begin{align*}
 (z-w)'(t)&=z'-(a(t)w(t)^2+b(t))\geq a(t)(z(t)^2-w(t)^2)\,,
\end{align*}
that is to say $(z-w)'(t)\geq a(t)(z(t)-w(t))(z(t)+w(t))$. It follows that, 
\[
\frac{\d{}}{\d{t}}\left((z-w)(t)e^{-\int_0^t a(s) (z+w)(s)\d{s}}\right)\geq 0\,.
\]
Hence $(z-w)(t) e^{-\int_0^t a(s) (z+w)(s)\d{s}}\geq (z-w)(0)\geq 0$, for all $t\in[0,T]$.
\end{proof}

\begin{lemma}\label{lem:hor2}
Let $T>0$ and   $a_0, a_1, a_2\in \C0([0,T];\reali)$. Let $a_0^+=\max(a_0,0)$ and $K$  be defined by 
\begin{align}
K&= \int_0^T \modulo{a_2(t)}\d{t} \exp\left(\int_0^T \modulo{a_1(t)}\d{t}\right)\,. \label{def:k}
\end{align}
If $y_0\geq 0$ and 
\begin{align}
\frac{1}{y_0+K}&>\int_0^T {a_0^+(t)}\d{t}\exp\left(\int_0^T \modulo{a_1(t)}\d{t}\right)\,,\label{hyp:1} \\
\frac{1}{K}&>\int_0^T\modulo{a_0(t)}\d{t}\exp\left(\int_0^T \modulo{a_1(t)}\d{t}\right)\,,\label{hyp:2}
\end{align}
then the maximal solution of the Cauchy problem
\begin{align}
y' &=a_0(t)y^2 +a_1(t)y +a_2(t)\,,&y(0)&=y_0\,, \label{ann:edo}
\end{align}
 is defined at least on $[0,T]$ and satisfies
\begin{align}
\frac{1}{y(T)}&>\frac{1}{y_0+K}-\int_0^T\modulo{a_0^+(t)}\d{t}\exp\left(\int_0^T \modulo{a_1(t)}\d{t}\right)\,,&\textrm{if }& y(T)\geq 0\,,\label{ineg:pos}\\
\frac{1}{\modulo{y(T)}}&>\frac{1}{K}-\int_0^T\modulo{a_0(t)}\d{t}\exp\left(\int_0^T \modulo{a_1(t)}\d{t}\right)\,,&\textrm{if }& y(T)< 0\,.\label{ine:neg}
\end{align}
\end{lemma}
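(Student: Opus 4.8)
The plan is to remove the linear term by an integrating factor, pass to the reciprocal of the resulting linear-term-free Riccati equation, and then read off both the global existence and the two bounds from a single explicit barrier, the comparison being exactly that of Lemma~\ref{lem:ppemax}. Concretely, I would set $g(t)=\exp\left(\int_0^t a_1\right)$ and $v=y/g$; a direct computation turns \eqref{ann:edo} into $v'=\tilde a_0\,v^2+\tilde a_2$ with $\tilde a_0=a_0\,g$, $\tilde a_2=a_2/g$ and $v(0)=y_0$. Note that $v$ has the same sign as $y$, that $\tilde a_0^+=a_0^+g$ and $\modulo{\tilde a_2}=\modulo{a_2}/g$, and that $g(t)^{\pm1}\le L:=\exp\left(\int_0^T\modulo{a_1}\right)$. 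The latter gives the majorizations
\[
g(T)\int_0^T\modulo{\tilde a_2}\le K,\qquad \frac{1}{g(T)}\int_0^T\tilde a_0^+\le L\int_0^T a_0^+,\qquad \frac{1}{g(T)}\int_0^T\modulo{\tilde a_0}\le L\int_0^T\modulo{a_0},
\]
which are precisely the quantities occurring in \eqref{hyp:1}–\eqref{ine:neg}. It therefore suffices to bound $v(T)$ and to divide by $g(T)$ at the end, the integrating-factor weights being absorbed into $K$ and into the $a_0$-terms exactly through these inequalities.

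The heart is the case $y(T)\ge 0$, where $v>0$ on $[0,T]$ and I work with $p=1/v$, solving $p'=-\tilde a_0-\tilde a_2\,p^2\ge -\tilde a_0^+-\modulo{\tilde a_2}\,p^2$, $p(0)=1/y_0$. I introduce the barrier $\varphi(t)=\bigl(y_0+\int_0^t\modulo{\tilde a_2}\bigr)^{-1}-\int_0^t\tilde a_0^+$. Wherever $\varphi\ge 0$ one has $\varphi\le\bigl(y_0+\int_0^t\modulo{\tilde a_2}\bigr)^{-1}$, hence $\varphi'=-\modulo{\tilde a_2}\bigl(y_0+\int_0^t\modulo{\tilde a_2}\bigr)^{-2}-\tilde a_0^+\le -\modulo{\tilde a_2}\varphi^2-\tilde a_0^+$, so $\varphi$ is a subsolution; since $\varphi(0)=1/y_0=p(0)$, the comparison of Lemma~\ref{lem:ppemax} (whose proof only uses that the lower function is a subsolution) gives $p\ge\varphi$ as long as $\varphi\ge 0$. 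Consequently $p$ never reaches $0$, so $v=1/p$ stays finite and the solution exists on all of $[0,T]$, and $p(T)\ge\varphi(T)$. Taking reciprocals, $v(T)\le\varphi(T)^{-1}$, whence $1/y(T)=1/(g(T)v(T))\ge \varphi(T)/g(T)$, which with the majorizations above yields \eqref{ineg:pos}.

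For $y(T)<0$, let $\tau$ be the last zero of $v$ before $T$ (it exists since $v(0)\ge 0>v(T)$) and put $s=-v>0$ on $(\tau,T]$ with $s(\tau)=0$; then $s'=-\tilde a_0 s^2-\tilde a_2\le \modulo{\tilde a_0}s^2+\modulo{\tilde a_2}$. Comparing $s$ with the solution of $\bar s'=\modulo{\tilde a_0}\bar s^2+\modulo{\tilde a_2}$, $\bar s(\tau)=0$, gives $s\le\bar s$; passing to $q=1/\bar s$ (with $q\to+\infty$ at $\tau$) one obtains $q'\ge -\modulo{\tilde a_2}q^2-\modulo{\tilde a_0}$, and the barrier $\varphi(t)=\bigl(\int_\tau^t\modulo{\tilde a_2}\bigr)^{-1}-\int_\tau^t\modulo{\tilde a_0}$, which again blows up at $\tau$, is a subsolution, so $q(T)\ge\varphi(T)$. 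Hence $\modulo{v(T)}=s(T)\le\bar s(T)=1/q(T)$ is controlled, and $1/\modulo{y(T)}=1/(g(T)\modulo{v(T)})\ge q(T)/g(T)$; using $\int_\tau^T\le\int_0^T$ together with the majorizations above, this is $\ge \tfrac1K-L\int_0^T\modulo{a_0}$, that is \eqref{ine:neg}.

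The conditions \eqref{hyp:1}–\eqref{hyp:2} enter exactly as the requirement $\varphi(T)>0$ in each case, which is simultaneously what prevents blow-up and what makes the right-hand sides of \eqref{ineg:pos}–\eqref{ine:neg} positive. The main obstacle is precisely this global-existence point: a Riccati solution may escape to $\pm\infty$ in finite time, and the whole content of the lemma is that the subsolution $\varphi$, through $p\ge\varphi>0$ (resp. $q\ge\varphi>0$), rules this out on all of $[0,T]$. Two subsidiary verifications remain: the comparison at the singular endpoint $\tau$ in the second case, which I would carry out on $[\tau+\epsilon,T]$ and then let $\epsilon\to 0$ (using that $q$ blows up at least as fast as $\varphi$ there), and the routine check that the crude weights $g^{\pm1}\le L$ indeed reproduce the constant $K$ of \eqref{def:k} and the exponentially weighted $a_0$-terms of the conclusions.
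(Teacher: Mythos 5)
Your core estimates (the reciprocal barriers $\varphi$ and $q$, and the sub/supersolution comparison via the Gronwall trick underlying Lemma \ref{lem:ppemax}) are sound, but the global-existence logic — which you yourself call ``the whole content of the lemma'' — is circular as written. You split into cases according to the sign of $y(T)$ before having proved that the solution reaches $T$: in the second case, choosing ``the last zero $\tau$ of $v$ before $T$'' presupposes existence on $[0,T]$. And in the first case the existence claim controls the wrong blow-up: the bound $p\ge\varphi>0$ forbids $p\to 0^+$, i.e.\ $v\to+\infty$, but it does not forbid $p\to+\infty$, i.e.\ $v\to 0^+$. Nothing in $y(T)\ge 0$ justifies your standing assumption that $v>0$ on all of $[0,T]$: $v$ may vanish, dip negative, and return (this only needs $\tilde a_2>0$ at the return crossing). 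Once $v<0$, the genuine danger is blow-up to $-\infty$ (recall $a_0\le 0$ in the intended application, so $\tilde a_0 v^2$ pushes downward), and that is excluded only by \eqref{hyp:2} via your case-2 estimate — which is not yet available. The paper's proof is structured precisely to avoid this trap: it first establishes the \emph{unconditional} one-sided bound $\tilde y\le z+v\le z+K$ on the whole maximal interval, where $z$ solves the explicitly integrable $z'=\tilde a_0^+(z+K)^2$, then the mirrored bound for $-\tilde y$ starting at a sign change, and only then deduces existence from the escape-to-infinity criterion. Your route is repairable in the same spirit: run the $\varphi$-barrier on each maximal interval where $v>0$ (restarted at its left endpoint, which is $0$ or a zero of $v$; the restarted bound is even stronger since $\int_\tau^t\le\int_0^T$ and $y_0$ drops out) and the $q$-barrier on each maximal interval where $v<0$; together these give two-sided a priori bounds on $\modulo{v}$ on the maximal existence interval, after which existence and the endpoint inequalities follow. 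But this restructuring is a missing idea, not a routine verification.

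Two secondary points. First, your final division by $g(T)$ in the positive case does not quite close: $\varphi(T)/g(T)$ has first term $\bigl(g(T)y_0+g(T)\int_0^T\modulo{\tilde a_2}\bigr)^{-1}$, and while your majorization $g(T)\int_0^T\modulo{\tilde a_2}\le K$ is correct, you also need $g(T)y_0\le y_0$, which fails whenever $\int_0^T a_1>0$. To be fair, the paper's own ``we can assume without loss of generality that $a_1\equiv 0$'' glosses over exactly the same factor $e^{\int_0^t a_1}$ when converting the bound on $\tilde y$ back to $y$ (the stated conclusions hold for the transformed variable, or for $y$ with an extra factor $e^{-\int_0^T\modulo{a_1}}$ on the right-hand sides, which is harmless for the way the lemma is used), so this is a shared defect rather than yours alone. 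Second, the $\epsilon$-regularization you sketch at the singular endpoint $\tau$ is also needed at $t=0$ in the first case when $y_0=0$ (where $p(0)=+\infty$), which you did not mention.
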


\begin{proof}
First, denoting $\tilde y=\exp\left({-\int_0^t a_1(s)\d{s}}\right) y$, $\tilde a_0=\exp\left({\int_0^t a_1(s)\d{s}}\right) a_0 $ and   $\tilde a_2 =\exp\left({-\int_0^t a_1(s)\d{s}}\right) a_2$,  we see that the equation (\ref{ann:edo})  becomes the ordinary differential equation
\begin{equation}\label{edotilde}
{\tilde y}' =\tilde a_0(t){\tilde y}^2+\tilde a_2(t)\,.
\end{equation}
and $\tilde y(0)=y(0)$. We can thus assume without loss of generality that $a_1\equiv 0$. 

Let us introduce the increasing function $v$  defined by  $v(t)=\int_0^t \modulo{\tilde a_2(s)}\d{s}$. Let $z$ be the maximal solution of the Cauchy problem
 \begin{align*}
 z'&={\tilde a}_0^+(t)(z+K)^2\,,&z(0)&=y_0\,.
 \end{align*}
Then $z$ is increasing and since $y_0+K>0$, we have
 \[
 \frac{1}{z(t)+K}=\frac{1}{y_0+K} -\int_0^t {\tilde a}_0^+(s)\d{s}\,.
 \]

Note that the right hand side does not vanish for all  $t\in [0,T]$, thanks to (\ref{hyp:1}). Besides, we have  $(z+v)(0)=y_0$ and 
 \begin{align*}
 (z+v)' &={\tilde a}_0^+(t)(z+K)^2+\modulo{\tilde a_2}\\
&\geq {\tilde a}_0^+(t)(z+v)^2+{\tilde a_2}\\
&\geq \tilde a_0(t) (z+v)^2 +\tilde a_2\,.
 \end{align*}
 Consequently, according to  Lemma \ref{lem:ppemax}, we have $y(t)\leq (z+v)(t)\leq z(t)+K$ for all $t\in [0,T]$ if $y$ exists. In particular, as long as   $y(t)>0$, we have
 \[
\frac{1}{y(t)}\geq\frac{1}{z(t)+K}= \frac{1}{y_0+K} -\int_0^t {\tilde a}_0^+(s)\d{s}=\frac{1}{y_0+K} -\int_0^t { a}_0^+(\tau)e^{\int_0^\tau a_1(s)\d{s}}\d{\tau}\,,
 \]
hence
\begin{equation}\label{wpos}
\frac{1}{y(t)}\geq\frac{1}{y_0+K} -\int_0^t { a}_0^+(\tau)\d{\tau}\,e^{\int_0^t \modulo{ a_1(s)}\d{s}}\,.
\end{equation}

Assume now that $y$ vanishes and changes its sign in  $t_0\in [0,T]$.  We can apply the same procedure as above to  $Y=-y$, replacing $y_0$ by $Y(t_0)=Y_0=0$ and beginning at time  $t_0$. The application $Y$ is then solution of $Y'=A_0 Y^2+a_1 Y+A_2$, where $A_0=-a_0$, $A_2=-a_2$. Denoting $A_0^+=\max(-a_0,0)$, we get that, for all $t\geq t_0$ such that  $Y(t)>0$ 
\[
\frac{1}{Y(t)}\geq\frac{1}{K} -\int_{t_0}^t { A}_0^+(\tau)\d{\tau}\,e^{\int_0^t \modulo{ a_1(s)}\d{s}}\,.
\]
Consequently, for all $t$ such that $y(t)<0$
\begin{equation}\label{wneg}
\frac{1}{\modulo{y(t)} }\geq\frac{1}{K} -\int_0^t \modulo{a_0(\tau)}\d{\tau}\, e^{\int_0^t \modulo{ a_1(s)}\d{s}}\,.
\end{equation}

Finally, the inequalities  (\ref{wpos})--(\ref{wneg}) give us some bounds on  $y$  for all time, $y$ being positive or negative. Hence, $y$ can not tend to $\pm\infty$ and exists up to time $T$.
Indeed, we proved above that there exists a function $\phi\in \C0([0,T],\reali)$ such that, if $y$ is solution of (\ref{ann:edo}), then  $\modulo{y(t)}\leq \phi(t)$  for all  $t\in [0,T]$. 
Let us denote $T_*$ the maximal time of existence of  $y$. If we assume $T_*<T$, then we obtain that $y$ is bounded on $[0,T_*[$ by $\max_{[0,T]}\phi$, which contradicts the fact that $y$ has to go out of all compact set when $t\to T_*<\infty$.
\end{proof}

\section{Explicit expression of the coefficients}\label{sec:vdw}

We use here the same notation as in Lemma \ref{lem:compab}.
\begin{remark}
Note  that, in the case $\rho\mapsto \frac{\sqrt{H'(\rho)}}{\rho}$ is integrable at $+\infty$, we have 
\begin{align*}
A&=1+g=\frac{1}{2\sqrt{H'}}\int_{+\infty}^\rho \frac{1}{u}\sqrt{H'(u)}(\mathscr{G}-1)\d{u}\leq 0\,.\\
\end{align*}
Hence $1+2g\leq 0$ and we have also $B\leq 0$. Besides,  
\begin{align*}
(1+g)H-B&=\frac{1}{2\sqrt{H'}}\int_0^\rho \left(H'\sqrt{H'}+\frac{H\sqrt{H'}}{\rho} (\mathscr{G}-1)\right)\geq 0\,.
\end{align*}
Hence, if $w_2=u+H\geq 0$, then
$
\Psi \leq\frac{(d-1)}{r}(1+g)(u+H)\leq 0
$
\end{remark}

\begin{remark}
In the case $d=3$, we obtain
\begin{align*}
a_2&=\frac{2\sqrt{H'}}{r^2}\big[
-u^2 \left(\mathscr{G}A^2 -2A (\mathscr{G}-2)+\mathscr{G}-1\right)
+u\left(2(A-1) (\mathscr{G}B+c) +4B\right)-\mathscr{G}B^2
\big]\\
&=\frac{2\sqrt{H'}}{r^2}\big[
-\mathscr{G}(u(A-1+\frac{2}{\mathscr{G}})-B)^2 -4 u^2(\frac{3}{4}-\frac{1}{\mathscr{G}}) +2uc(A-1)
\big]\,,
\end{align*}
and
\begin{align*}
b_2&= \frac{2\sqrt{H'}}{r^2}\big[
-u^2\left(\mathscr{G}A^2-2A(\mathscr{G}-2) +\mathscr{G}-1\right) 
+u\left(2B(\mathscr{G}-2-\mathscr{G}A) -2c(A-1)\right) -\mathscr{G}B^2
\big]
\end{align*}
In the case $d=2$, we obtain
\begin{align*}
a_2&=\frac{\sqrt{H'}}{2r^2}\big[-\mathscr{G} \left(u(A-1+\frac{5}{2\mathscr{G}} ) -B\right)^2 -\frac{25}{4} u^2\left(\frac{16}{25} -\frac{1}{\mathscr{G}}\right) +3uc(A-1)\big]\,.
\end{align*}

\end{remark}
\subsection{Perfect gas.}
For a perfect gas, we have: $c(\rho)=\sqrt{\g(\g-1)}\left(\rho\right)^{\frac{\g-1}{2}}$, thus
\begin{align*}
H&=2\sqrt{ \frac{\g}{\g-1}} \left( \rho\right)^{\frac{\g-1}{2}}\,.
\end{align*}
This implies, denoting $\nu=\frac{ \g+1 }{\g-1}>1$
\begin{align*}
\rho &= (\frac{\g-1}{4\g})^{\frac{1}{\g-1}} H^{\nu-1} \,,\\
c(\rho)&=\frac{\g-1}{2} H\,,\\
H'&=\frac{\g-1}{2} (\frac{4\g}{\g-1})^{\frac{1}{\g-1}}  H^{2-\nu}
\end{align*}
Consequently, noting that $\frac{\g-1}{2}=\frac{1}{\nu-1}$, $u=\frac{w_1+w_2}{2}$ and $H=\frac{w_2-w_1}{2}$, we get 
\begin{align*}
\lambda_1&=\frac{w_1+w_2}{2}-\frac{1}{\nu-1}\left(\frac{w_2-w_1}{2}\right)\,,\\
\lambda_2&=\frac{w_1+w_2}{2}+\frac{1}{\nu-1}\left(\frac{w_2-w_1}{2}\right)\,,\\ 
\pt_1\lambda_1& =\pt_2\lambda_2  =\frac{\mathscr{G}}{2}=\frac{\e}{2(\e-1)} >0\,,\\
\pt_2\lambda_1& =\pt_1\lambda_2=\frac{(\e-2) }{2(\e-1)}\geq 0\,.
\end{align*}
Then $h=k=\mathrm{ln} \left( H^{ (2-\nu)/2}\right)$ and 
\begin{align*}
g&=\frac{\nu-1}{(2-\nu) }\leq 0\,,\\
B&=\frac{\nu}{(2-\nu)(4-\nu)} H  \,,\\
a_0&=\frac{-\nu}{2(\nu-1)^{1/2}(2\nu+2)^{\frac{\nu-1}{2}}} H^{\nu-1}\,,\\
a_1&=\frac{d-1}{(2-\nu)r}\big[ 3u+\frac{2(4-3\nu)H}{(\nu-1)(4-\nu)} \big]\,,\\
a_2&= \frac{d-1}{(2-\nu)r^2}\big[
-u^2 +\frac{4uH}{(4-\nu)} - \frac{\nu H^2}{(\nu-1)(4-\nu)} \\
&\quad +\frac{d-1}{2}\left( \frac{-2u^2}{(2-\nu)} -\frac{8uH}{(2-\nu)(4-\nu)} +( \frac{\nu H^2}{(\nu-1)(4-\nu)}-\frac{\nu^3}{(\nu-1)(2-\nu)(4-\nu)^2} )H^2 \right)
\big]
\end{align*}

\subsection{Van der Waals gas.}
For a Van der Waals gas, we have: $c(\rho)=\frac{1}{1-b\rho}\sqrt{\g(\g-1)}\left(\frac{\rho}{1-b\rho}\right)^{\frac{\g-1}{2}}$, thus
\begin{align*}
H&=2\sqrt{ \frac{\g}{\g-1}} \left( \frac{\rho}{1-b\rho}\right)^{\frac{\g-1}{2}}\,.
\end{align*}
This implies, denoting $\tilde b=b(\frac{\g-1}{4\g} )^{\frac{1}{\g-1}}$ and $\nu=\frac{ \g+1 }{\g-1}>1$
\begin{align*}
\rho&=\frac{ (\frac{\g-1}{4\g})^{\frac{1}{\g-1}} H^{\nu-1} }{ 1+\tilde b H^{\nu-1}}\,,\\
c(\rho)&=\frac{\g-1}{2}(1+\tilde b H^{\nu-1}) H\,,\\
H'&=\frac{\g-1}{2} (\frac{4\g}{\g-1})^{\frac{1}{\g-1}} (1+\tilde b H^{\nu-1})^2 H^{2-\nu}
\end{align*}
Consequently, noting that $\frac{\g-1}{2}=\frac{1}{\nu-1}$, $u=\frac{w_1+w_2}{2}$ and $H=\frac{w_2-w_1}{2}$, we get 
\begin{align*}
\lambda_1&=\frac{w_1+w_2}{2}-\frac{1}{\nu-1}\left(\frac{w_2-w_1}{2}+ \tilde b(\frac{w_2-w_1}{2})^{\nu}\right)\,,\\
\lambda_2&=\frac{w_1+w_2}{2}+\frac{1}{\nu-1}\left(\frac{w_2-w_1}{2}+ \tilde b(\frac{w_2-w_1}{2})^{\nu}\right)\,,\\ \pt_1\lambda_1&=\pt_2\lambda_2  =\frac{\mathscr{G}}{2}=\frac{\e}{2(\e-1)} (1+\tilde b H^{\nu-1})>0\,,\\
\pt_2\lambda_1&=\pt_1\lambda_2=\frac{1}{2(\e-1)}\left((\e-2) -\tilde b \e H^{ \nu-1 }\right)\,.
\end{align*}
Then $h=k=\mathrm{ln} \left( H^{ (2-\nu)/2} (1+\tilde b H^{\nu-1})\right)$ and 
\begin{align*}
g&=\frac{\nu-1}{(2-\nu)(1+\tilde{b} H^{\nu-1}) }\,,\\
B&=\frac{1}{(2-\nu)(4-\nu)}\frac{H}{1+\tilde{b}H^{\nu-1}} \left(1+\frac{(2-\nu)(4-\nu)}{2+\nu}\tilde{b} H^{\nu-1}\right)\,,\\
a_0&=\frac{-\nu}{2(\nu-1)^{1/2}(2\nu+2)^{\frac{\nu-1}{2}}} H^{\nu-1}\,.
\end{align*}

\small{

 \bibliography{ref}

 \bibliographystyle{abbrv} }

\end{document}